\numberwithin{equation}{section}
\newcommand\CA{{\mathscr A}}
\newcommand\FD{\mathfrak D}
\newcommand\FDinfty{\FD(\CA,-\infty)}
\newcommand\BBC{{\mathbb C}}
\newcommand\BBN{{\mathbb N}}
\newcommand\BBZ{{\mathbb Z}}
\newcommand\Sym{\operatorname{Sym}}
\newcommand\SalphaH{{S_{\langle{\alpha_H}\rangle}}}
\DeclareMathOperator{\spec}{Spec}
\DeclareMathOperator{\diag}{diag}
\newcommand\Hodgefil{\mathcal{H}}
\newcommand\Hodgedec{\mathcal{G}}
\newcommand\Binf{B_\infty}
\newcommand{\Mprime}{M^{(0)}}
\newcommand\coexp{\operatorname{coexp}}
\newcommand\Der{{\operatorname{Der}}}
\newcommand\DerS{{\Der_S}}
\newcommand\DerF{{\Der_F}}
\newcommand\DerSW{{\Der_S^W}}
\newcommand\DerR{{\Der_R}}
\newcommand\InvSpecial{F^{\mathrm{fl}}}
\newcommand\hh{\Delta}%\mathrm h}
\newcommand\VanLoc{\mathcal{H}}
\newcommand\Fix{{\operatorname{Fix}}}
\newcommand\GL{\operatorname{GL}}
\newcommand\pdeg{\operatorname{pdeg}}
\newcommand\refl{\mathcal{R}}
\newcommand{\prim}{\nabla_{\! D}}
\newcommand{\primk}[1][k]{\prim^{#1}}
\newcommand{\priminv}{\prim^{-1}}
\newcommand{\eqdef}{:=}
\newcommand{\pmat}[1]{\begin{pmatrix} #1 \end{pmatrix}}
\newcommand{\colvect}{\pmat{\partial_{t_1} \\ \vdots \\ \partial_{t_\ell}}}
\newcommand{\colvecx}{\pmat{\partial_{x_1} \\ \vdots \\ \partial_{x_\ell}}}
\newcommand{\colvecEta}{\pmat{\eta_\ell \\ \vdots \\ \eta_1}}
\newcommand{\rowvect}{(\partial_{t_1}, \ldots, \partial_{t_\ell})}
\newcommand{\rowvecEta}{(\eta_\ell, \ldots, \eta_1)}
\newcommand{\Jtx}{J_{\partial\bbt/\partial\bbx}}
\newcommand{\Jxt}{J_{\partial\bbx/\partial\bbt}}
\newcommand{\bbt}{{\textbf{t}}}
\newcommand{\bbx}{{\textbf{x}}}
\newcommand{\tr}{\mathrm{tr}}
\definecolor{darkblue}{rgb}{0,0,0.7} % darkblue color
\newcommand{\darkblue}{\color{darkblue}} % darkblue command
\newcommand{\defn}[1]{\emph{\darkblue #1}} % emphasis of a definition
\newcommand{\ie}{{i.e.}} % id est
\newcommand{\cf}{\text{cf.}} % conferre
\newcommand{\one}{{1\!\!1}} % the identity matrix
\theoremstyle{plain}
\newtheorem{lemma}[equation]{Lemma}
\newtheorem{theorem}[equation]{Theorem}
\newtheorem{corollary}[equation]{Corollary}
\newtheorem{proposition}[equation]{Proposition}
\theoremstyle{definition}
\newtheorem{definition}[equation]{Definition}
\newtheorem{remark}[equation]{Remark}
\subjclass[2010]{Primary: 20F55, Secondary: 52C35, 32S25}
\begin{document}

%%%%%%%%%%%%%%%%%%%%%%%%%%%%%%%%%%%%%%%%%%%%%%%%%%%%%%%%%%%%%%%%%%%%%%
%%%%%%%%%%%%% top matter stuff
%%%%%%%%%%%%%%%%%%%%%%%%%%%%%%%%%%%%%%%%%%%%%%%%%%%%%%%%%%%%%%%%%%%%%%
\title[Logarithmic vector fields on reflection arrangements]
{A Hodge filtration of logarithmic vector fields \\ for well-generated complex reflection groups}

\author[T.~Abe]{Takuro Abe}
\address[T.~Abe]{Department of Mathematics, Rikkyo University, Tokyo, Japan}
\email{abetaku@rikkyo.ac.jp}

\author[G.~R\"ohrle]{Gerhard R\"ohrle}
\address[G.~R\"ohrle]{Fakult\"at f\"ur Mathematik, Ruhr-Universit\"at Bochum, Germany}
\email{gerhard.roehrle@rub.de}

\author[C.~Stump]{Christian Stump}
\address[C.~Stump]{Fakult\"at f\"ur Mathematik, Ruhr-Universit\"at Bochum, Germany}
\email{christian.stump@rub.de}

\author[M.~Yoshinaga]{Masahiko Yoshinaga}
\address[M.~Yoshinaga]{Department of Mathematics, Hokkaido University, Sapporo, Japan}
\email{yoshinaga@math.sci.hokudai.ac.jp}

% \date{\today}
\keywords{unitary reflection group, logarithmic vector fields, Hodge filtration}

\allowdisplaybreaks

\begin{abstract}
  Given an irreducible well-generated complex reflection group, we construct an explicit basis for the module of vector fields with logarithmic poles along its reflection arrangement.
  This construction yields in particular a Hodge filtration of that module.
  Our approach is based on a detailed analysis of a flat connection applied to the primitive vector field.
  This generalizes and unifies analogous results for real reflection groups.
\end{abstract}

\maketitle

% \setcounter{tocdepth}{1}
% \tableofcontents

%%%%%%%%%%%%%%%%%%%%%%%%%%%%%%%%%%%%%%%%%%%%%%%%%%%%%%%%%%%%%%%%%%%%%%
%%%%%%%%%%%%% article body...
%%%%%%%%%%%%%%%%%%%%%%%%%%%%%%%%%%%%%%%%%%%%%%%%%%%%%%%%%%%%%%%%%%%%%%

%%%%%%%%%%%%%%%%%%%%%%%%%%%%%%%%%%%%%%%%%%%%%%%%%%%%%%%%%%%%%%%%%%%%%%
\section{Introduction}

%\christian{add uniqueness of universal vector field}

The study of vector fields with logarithmic poles along the reflection arrangement of a finite Coxeter group inside a real vector space has been a particularly active and fruitful area of recent research.
Most importantly, Abe-Terao use Saito's primitive derivation to construct an explicit basis of this vector field in~\cite{AT2010}, thereby extending Saito's Hodge filtration of invariant polynomial derivations to a Hodge filtration of the module of invariant vector fields with logarithmic poles.
Also, Wakamiko identified the concept of a universal vector field as a crucial ingredient in his construction of an explicit basis in~\cite[Sec.~2]{Wak2011}.

Based on a recent extension of Saito's primitive derivation and Hodge filtration to well-generated unitary reflection groups in~\cite{HMRS2017}, we establish analogues of the above constructions in this more general setting.
Specifically, we provide a framework to extend~\cite[Thms.~1.1 \& 1.2]{AT2010} to well-generated unitary reflection groups (\Cref{thm:Hodgefiltration} and \Cref{cor:Hodgefiltration}), and derive universality results generalizing~\cite[Sec.~2]{Wak2011} (\Cref{thm:universality,,thm:zetapmshifts,,thm:shifteduniversality}).
Because of the new explicit form of the flat connection in~\cite{HMRS2017}, the approach we provide here has not appeared in the literature even in the real case, simplifying several arguments.

\medskip

We give a brief comment on the terminology of a ``Hodge filtration'' in our context. This was introduced
by K.~Saito in \cite{Sai1983}. The primitive form is a special element in the
relative de Rham cohomology group for the deformation of an isolated hypersurface 
singularity, which satisfies lots of nice properties.
For simple singularities of type $ADE$, the parameter space
of the deformation is identified
with the quotient of the Cartan subalgebra by the Weyl group.
For a given logarithmic vector field, we can differentiate (Gauss-Manin connection)
the primitive form by the vector field, which gives an identification between the module
of logarithmic vector fields and the relative de Rham cohomology group.
The primitive derivation defines a natural filtration on which the primitive
derivation acts by degree shift by one. This structue is very similar to
the behavior of Hodge filtrations on the cohomology of the fiber with respect to
the Gauss-Manin connection of a proper smooth holomorphic map
(the so-called Griffith transversality). This is the reason why the filtration
on the module of logarithmic vector fields is called the Hodge filtration. 

\medskip

The paper is organized as follows.
\Cref{sec:preliminaries} contains preliminaries about complex hyperplane arrangements with $\BBZ$-valued multiplicities, and recalls the needed background on unitary reflection groups and the necessary properties of flat connections for well-generated unitary reflection groups.
In \Cref{sec:generalarrangements} we provide an analogue of Saito's criterion for $\BBZ$-valued multiplicities, introduce universal vector fields and derive several of their properties.
\Cref{sec:hodgefil} contains the main results of this paper as presented above.

For the benefit of the reader, in our proofs we have separated the properties that follow directly from the existence of a universal vector field from those that only hold in the case of well-generated complex reflection groups (where we do have such a universal vector field) see \Cref{sec:universality1,,sec:universality2}.

\medskip

Recall that a complex reflection group $W$ is called well-generated provided it is generated by $\ell$ reflections, where $\ell$ is the dimension of its reflection 
representation. 
See Section \ref{sec:prelimreflectiongroupps} for an equivalent definition used throughout the paper; here the exponents and coexponents of $W$ add up to the Coxeter number of $W$ in a particular fashion, see~\eqref{eq:wellgen}.

\medskip

We finish this introduction with a brief discussion of the crucial differences and similarities of the situations for real and complex reflection arrangements;  for definitions we refer to the sections below.
Denote by $\nu : \CA \rightarrow \BBZ$ a general $\BBZ$-valued multiplicity function on the reflection arrangement~$\CA$ of a well-generated irreducible unitary reflection group~$W$, and by $\omega : \CA \rightarrow \BBZ$ the multiplicity function assigning to a reflecting hyperplane $H$ the order of its pointwise stabilizer, $\omega(H) = e_H= |W_H|$.

\begin{enumerate}
  \item If $W$ is real, then $\omega \equiv 2$.
        In particular, for the module of derivations $\FD(\CA,\nu-1)$ of $\CA$ with multiplicity $\nu-1$, we have
        \[
          \FD(\CA,\nu-1) = \FD(\CA,\nu-\omega+1).
        \]
        It turns out that the counterpart of the module of derivations $\FD(\CA,\nu-1)$ that is used in the literature in the real case is the module $\FD(\CA,\nu-\omega+1)$ in the general complex case.
        One crucial instance, where a $(-1)$-multiplicity in the real case is replaced by a $(-\omega+1)$-multiplicity in the complex case, is demonstrated in \Cref{thm:zetapmshifts}, which generalizes the analogous result from the real case in~\cite[Thm.~2.7]{Wak2011}.
  \item It is well known that in the real case there is an isomorphism of graded modules 
        \[
          \FD(\CA,-1) = \FD(\CA,-\omega+1) \cong \Omega(\CA,1),
        \]
        where $\Omega(\CA,1)$ is the module of differential $1$-forms. As explained in \Cref{rem:tocheck}, by degree comparison, this isomorphism does not extend to complex reflection arrangements that are not the complexification of a real arrangement.

  \item The explicit form of the flat connection that was exhibited in~\cite{HMRS2017}, see \Cref{prop:mainingredient}, is the crucial ingredient in the construction of the bases in \Cref{thm:Hodgefiltration} and of the Hodge filtration in \Cref{cor:Hodgefiltration}.
  Using this explicit form and a \emph{system of flat invariants} and of \emph{flat derivations} has the additional benefit of simplifying the arguments for real reflection arrangements.
  We remark that, while the existence of flat derivations has been known for some time, see~\cite{Bes2015}, the existence of flat invariants for well-generated complex reflection groups was only discovered quite recently in~\cite{KMS2015}.

  \item This paper does not deal with equivariant multiplicities (multiplicities that are constant along hyperplane orbits) as provided in the real case in~\cite{ATW2012}.
  That construction is based on a case-by-case analysis of primitive vector fields along reflection subgroups generated by orbits of reflections.
  We hope for a general conceptual approach to such equivariant multiplicities.
\end{enumerate}

%%%%%%%%%%%%%%%%%%%%%%%%%%%%%%%%%%%%%%%%%%%%%%%%%%%%%%%%%%%%%%%%%%%%%%

\section{Preliminaries}
\label{sec:preliminaries}

\subsection{Multi-arrangements of hyperplanes and their derivations}

Let~$V$ be a finite-dimensional complex vector space of dimension~$\ell$ and fix a Hermitian form~$I : V \times V \rightarrow \BBC$ on~$V$.
Let $S = \Sym(V^*)$ denote the ring of \defn{polynomial functions} on~$V$ and let~$F$ denote its quotient field of \defn{rational functions}.
If $x_1, \ldots, x_\ell$ is a basis of~$V^*$, we identify~$S$ with the polynomial ring $\BBC[x_1, \ldots , x_\ell]$ and~$F$ with the field of rational functions $\BBC(x_1,\ldots,x_\ell)$.
Letting $S_p$ denote the $\BBC$-subspace of~$S$ consisting of the homogeneous polynomials of degree $p$ (along with $0$),~$S$ is naturally $\BBZ$-graded by $S = \oplus_{p \in \BBZ}S_p$, where we set $S_p = 0$ for $p < 0$.
Denote by $\DerS$ the \defn{$S$-module of $\BBC$-derivations} of~$S$, and by $\DerF$ the \defn{$F$-module of $\BBC$-derivations} of~$F$.
Then $\partial_{x_1}, \ldots, \partial_{x_\ell}$ is an $S$-basis of $\DerS$ and an~$F$-basis of $\DerF$.
We say that $\theta \in \DerF$ is \defn{homogeneous of polynomial degree $p-q$} provided $\theta = \sum \frac{f_i}{g_i} \partial_{x_i}$, where $f_i \in S_p$ and $g_i \in S_q\setminus\{0\}$ for each $1 \le i \le \ell$.
In this case we write $\pdeg \theta = p-q$.
Recall that, for $\theta \in \DerF$, we have $\theta = \theta(x_1)\partial_{x_1} + \dots + \theta(x_\ell)\partial_{x_\ell}$.
The \defn{Saito matrix} of $\theta_1, \ldots, \theta_\ell \in \DerF$ is given by
\[
  M(\theta_1, \ldots, \theta_\ell) \eqdef
  \begin{bmatrix}
    \theta_1(x_1) & \cdots & \theta_1(x_\ell) \\
    \vdots & \ddots & \vdots \\
    \theta_\ell(x_1) & \cdots & \theta_\ell(x_\ell)
  \end{bmatrix},
\]
see~\cite[Def.~4.11]{orlikterao:arrangements}.
That is, the Saito matrix collects in the $i$-th row the coefficients of $\theta_i$ in the $F$-basis $\{ \partial_{x_1},\ldots,\partial_{x_\ell}\}$ of $\DerF$.

\medskip

Extending Ziegler's concept of an $\BBN$-valued multiplicity function from~\cite{ziegler:multiarrangements}, a \defn{multi-arrangement} $(\CA,\nu)$ is an arrangement~$\CA$ together with a \defn{multiplicity function} $\nu : \CA \to \BBZ$ assigning to each hyperplane~$H\in\CA$ a multiplicity $\nu(H) \in \BBZ$.
We use the term multi-arrangement even though we allow hyperplanes to have negative multiplicities, while disregarding hyperplanes of multiplicity zero.
We only consider \emph{central} multi-arrangements $(\CA,\nu)$, \ie, $0 \in H$ for every $H \in \CA$.
In this case, we fix $\alpha_H \in V^*$ with $H = \ker(\alpha_H)$ for $H \in \CA$ which we scale so that $I^*(\alpha_H,\alpha_H) = 1$.
The \defn{defining rational function} $Q(\CA,\nu)$ is
\[
  Q(\CA,\nu) \eqdef \prod_{H \in \CA} \alpha_H^{\nu(H)}
\]
which we sometimes abbreviate as $Q_\nu \eqdef Q(\CA,\nu)$.
We set $|\nu| \eqdef \sum_{H \in \CA} \nu(H)$ to be the degree of $Q_\nu$ and separate $Q_\nu$ via $Q_\nu = Q_+/Q_-$ with
\begin{equation}
\label{eq:qplus}
  Q_+ \eqdef \prod_{\substack{H \in \CA\\ \nu(H) > 0}} \alpha_H^{ \nu(H)}, \qquad
  Q_- \eqdef \prod_{\substack{H \in \CA\\ \nu(H) < 0}} \alpha_H^{-\nu(H)}.
\end{equation}

To deal with general multiplicity functions on the arrangement~$\CA$, we also set
\[
  Q_\CA \eqdef  Q(\CA, 1) = \prod_{H \in \CA} \alpha_H
\]
to be the defining polynomial of the \defn{simple arrangement}~$\CA$ which we later abbreviate as~$Q = Q_\CA$ when~$\CA$ is clear from the context.

\begin{definition}
\label{def:logarithmicvectorfield}
  Let $\SalphaH$ be the localization of the ring~$S$ at the prime ideal $\langle{\alpha_H}\rangle$.
  Setting $S' \eqdef S[Q_\CA^{-1}]$, the $S$-module of \defn{logarithmic vector fields} on $\CA$ is defined by
  \begin{equation*}
    \FDinfty \eqdef \big\{ \theta \in \Der_{S'} \mid \theta(\beta) \in \SalphaH \text{ for } H \in \CA \text{ and } \beta \in V^* \text{ with } I^*(\alpha_H,\beta) = 0 \big\}.
  \end{equation*}
  Given a multiplicity function $\nu : \CA \rightarrow \BBZ$, define the $S$-module of \defn{$(\CA, \nu)$-derivations} by
  \begin{equation*}
    \FD(\CA,\nu) \eqdef \big\{ \theta \in \FDinfty \mid \theta(\alpha_H) \in \alpha_H^{\nu(H)}\SalphaH \text{ for } H \in \CA\big\}.
  \end{equation*}
\end{definition}

Observe that in general this definition depends on the chosen Hermitian form, while $\FD(\CA,0) = \DerS$.
We remark also that the given definition is an equivalent reformulation of the definitions given in~\cite{AT2010,Wak2011}.

\medskip

We record the following basic containment property for multiplicities $\mu \geq \nu$, i.e., $\mu(H) \geq \nu(H)$ for all $H \in \CA$.

\begin{lemma}
\label{lem:compmultiplicities}
  We have $\mu > \nu$ if and only if $\FD(\CA,\mu) \subsetneqq \FD(\CA,\nu)$.
  In particular, we have $\FD(\CA,\mu) \subseteq \DerS$ if and only if $\mu \geq 0$.
\end{lemma}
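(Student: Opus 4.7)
The lemma breaks into two directions on top of an underlying non-strict containment, and my plan is to separate these cleanly. First, I would record the easy inclusion: if $\mu \geq \nu$ pointwise, then for every $H \in \CA$ one has $\alpha_H^{\mu(H)-\nu(H)} \in S \subseteq \SalphaH$, so $\alpha_H^{\mu(H)}\SalphaH \subseteq \alpha_H^{\nu(H)}\SalphaH$, and hence $\FD(\CA,\mu) \subseteq \FD(\CA,\nu)$ directly from the definition. Both implications of the lemma will then follow once I can produce, for any $H_0 \in \CA$, an element of $\FD(\CA,\nu)$ that saturates the local order bound at $H_0$.

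The main step is the construction of such a sharp derivation. Extend $\alpha_1 \eqdef \alpha_{H_0}$ to an $I^*$-orthonormal basis $\alpha_1,\ldots,\alpha_\ell$ of $V^*$, and let $\partial_{\alpha_1}$ be defined by $\partial_{\alpha_1}(\alpha_j) = \delta_{1j}$. I then set
\[
  \theta_0 \eqdef \alpha_{H_0}^{\nu(H_0)} \cdot \prod_{\substack{H \in \CA \setminus \{H_0\} \\ \nu(H) > 0}} \alpha_H^{\nu(H)} \cdot \partial_{\alpha_1}.
\]
Since the only exponent that may be negative is $\nu(H_0)$, the scalar coefficient lies in $S[Q_\CA^{-1}]$, so $\theta_0 \in \Der_{S'}$. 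To verify $\theta_0 \in \FDinfty$ I would use two observations: $I^*$-orthonormality of the chosen basis gives $\partial_{\alpha_1}(\beta) = 0$ whenever $I^*(\alpha_{H_0},\beta) = 0$, which handles the local condition at $H_0$; and for $H \neq H_0$ the factors $\alpha_{H'}^{\nu(H')}$ with $H' \neq H$ are units in the localization $S_{\langle\alpha_H\rangle}$. An analogous bookkeeping shows $\theta_0(\alpha_H) \in \alpha_H^{\nu(H)} S_{\langle\alpha_H\rangle}$ for every $H$, with $\theta_0(\alpha_{H_0}) = \alpha_{H_0}^{\nu(H_0)} \cdot u$ for some unit $u \in S_{\langle\alpha_{H_0}\rangle}$. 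Hence $\theta_0 \in \FD(\CA,\nu)$ and the order of $\theta_0(\alpha_{H_0})$ at $\langle\alpha_{H_0}\rangle$ is exactly $\nu(H_0)$.

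The two directions of the equivalence then follow quickly. For the forward direction, $\mu > \nu$ picks out some $H_0$ with $\mu(H_0) > \nu(H_0)$, and the derivation $\theta_0$ just constructed lies in $\FD(\CA,\nu) \setminus \FD(\CA,\mu)$, upgrading the easy inclusion to a strict one. For the converse, strict containment rules out $\mu = \nu$, and also forces $\mu \geq \nu$ pointwise: otherwise there would exist $H_0$ with $\nu(H_0) > \mu(H_0)$, and the same construction applied with $\mu$ in place of $\nu$ would produce an element of $\FD(\CA,\mu) \setminus \FD(\CA,\nu)$, contradicting the given inclusion. The ``in particular'' clause falls out by specializing $\nu \equiv 0$ and using $\FD(\CA,0) = \DerS$. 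The main obstacle is really just the saturating construction of $\theta_0$; once orthonormality is chosen and one keeps careful track of which factors are units in which localization, everything reduces to counting orders of $\alpha_H$ at $\langle\alpha_H\rangle$.
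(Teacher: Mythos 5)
Your proof is correct and takes essentially the same route as the paper: the witness $\theta_0$ you construct is exactly the paper's element $Q_+\partial_{\alpha_H}$ (resp.\ $\alpha_H^{\nu(H)}Q_+\partial_{\alpha_H}$ when $\nu(H)<0$), merely written as a single unified formula, and the verification via orthonormal coordinates and units in $\SalphaH$ matches the paper's. The only difference is that you make explicit the converse implication (strict containment forces $\mu \geq \nu$ pointwise, via the same construction applied to $\mu$), which the paper dispatches as immediate from the definition.
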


\begin{proof}
  The reverse implication of the asserted equivalence, as well as the fact that $\mu \geq \nu$ implies $\FD(\CA,\mu) \subseteq \FD(\CA,\nu)$ are immediate from the definition.
  For $\mu > \nu$, let $H \in \CA$ with $\mu(H) > \nu(H)$.
  If $\nu(H) \geq 0$, then
  $Q_+\partial_{\alpha_H} \in \FD(\CA,\nu) \setminus \FD(\CA,\mu)$,
  and if $\nu(H) < 0$, then $\alpha_H^{\nu(H)} Q_+\partial_{\alpha_H}  \in \FD(\CA,\nu) \setminus \FD(\CA,\mu)$, where $Q_+$ is as in~\eqref{eq:qplus}.
\end{proof}

The multi-arrangement $(\CA, \nu)$ is \defn{free} if $\FD(\CA, \nu)$ is a free~$S$-module.
In this case, $\FD(\CA, \nu)$ admits a basis $\{\theta_1, \ldots, \theta_\ell\}$ of homogeneous derivations, see~\cite[Thm~A.20]{orlikterao:arrangements}.
While the $\theta_i$'s are not unique, their polynomial degrees $\pdeg \theta_i$ are, see~\cite[Prop.~A.24]{orlikterao:arrangements}.
The multiset of these polynomial degrees is the multiset of \defn{exponents} of the free multi-arrangement $(\CA,\nu)$.
It is denoted by
\[
  \exp(\CA,\nu) \eqdef \big\{ \pdeg(\theta_1),\ldots, \pdeg(\theta_\ell) \big\}.
\]

\subsection{Unitary reflection groups and their reflection arrangements}
\label{sec:prelimreflectiongroupps}

Let~$W$ be an irreducible unitary reflection group with reflection representation~$V \cong \BBC^\ell$, and let~$I$ be the associated~$W$-invariant Hermitian inner form.
We refer to~\cite{HMRS2017} and the references therein for all necessary background material on reflection groups.
Denote the set of reflections of~$W$ by $\refl = \refl(W)$, and the associated reflection arrangement in~$V$ by $\CA = \CA(W)$.
For $H \in \CA$, let $e_H$ denote the order of the pointwise stabilizer of~$H$ in~$W$.
The \defn{Coxeter number} of~$W$ is given by
\begin{equation}
\label{eq:coxeter}
  h = h_W \eqdef \frac{1}{\ell} \sum_{H \in \CA} e_H = \frac{1}{\ell}\Big(|\refl| + |\CA|\Big),
\end{equation}
generalizing the usual Coxeter number of a real reflection group to irreducible unitary reflection groups.

\medskip

Before proceeding, we record, without proof, the following well-known property of the $W$-action on polynomial functions.

\begin{lemma}
\label{lem:quasiinvarinantpoly}
  Let $g \in S$ and let $H \in \CA$ with corresponding reflection~$s \in \refl$ with $\Fix(s) = H$ of order~$e_H$.
  Let~$\epsilon$ be a primitive $e_H$-th root of unity such that $s(\alpha_H) = \epsilon\alpha_H$.
  For $1 \leq k < e_H$, we have
  \[
    s(g) = \epsilon^k g \ \Longrightarrow\ g \in \alpha_H^k \cdot S.
  \]
\end{lemma}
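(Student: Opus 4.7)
The plan is to diagonalize the action of the reflection~$s$ on $V^*$ and then expand $g$ in the resulting eigenbasis. Since $s$ has finite order, it is diagonalizable on $V^*$, and since $\dim \Fix(s) = \ell - 1$, there is a basis $y_1, \ldots, y_\ell$ of $V^*$ with $y_1 = \alpha_H$ a $\epsilon$-eigenvector and $y_2, \ldots, y_\ell$ a basis of the $1$-eigenspace (that is, a basis of $H^*$, the functions vanishing on the line $H^\perp$ and restricting to coordinates on $H$). Under this identification, $S = \BBC[\alpha_H, y_2, \ldots, y_\ell]$ and $s$ fixes each $y_i$ for $i \geq 2$.

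Next, I would expand $g$ as a polynomial in $\alpha_H$ with coefficients in $\BBC[y_2, \ldots, y_\ell]$,
\[
  g = \sum_{j \geq 0} c_j\,\alpha_H^j, \qquad c_j \in \BBC[y_2, \ldots, y_\ell],
\]
and apply $s$ termwise. Since $s$ fixes each $y_i$ for $i \geq 2$, it fixes each $c_j$, and since $s(\alpha_H) = \epsilon\,\alpha_H$, one obtains
\[
  s(g) = \sum_{j \geq 0} c_j\,\epsilon^j\,\alpha_H^j.
\]
Comparing this with the hypothesis $s(g) = \epsilon^k g = \sum_j c_j\,\epsilon^k\,\alpha_H^j$, the linear independence of the monomials $\alpha_H^j$ over $\BBC[y_2, \ldots, y_\ell]$ forces $(\epsilon^j - \epsilon^k)\,c_j = 0$ for every~$j$, hence $c_j = 0$ unless $j \equiv k \pmod{e_H}$.

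Finally, since $0 \leq k < e_H$, no index $j$ with $0 \leq j < k$ satisfies the congruence $j \equiv k \pmod{e_H}$. Therefore every surviving term has $j \geq k$, so
\[
  g = \alpha_H^k \cdot \sum_{j \geq k} c_j\,\alpha_H^{j-k} \in \alpha_H^k \cdot S,
\]
as required. The only step requiring any care is the initial choice of eigen-coordinates, but this follows at once from the diagonalizability of a finite-order element of $\GL(V)$ together with $\dim\Fix(s) = \ell - 1$; the rest is a direct comparison of coefficients, so there is no real obstacle.
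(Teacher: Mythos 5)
Your proof is correct. Note that the paper records this lemma explicitly \emph{without proof} as a well-known fact, so there is no argument in the text to compare against; the one you give --- diagonalizing $s$ on $V^*$ so that $S = \BBC[\alpha_H, y_2,\ldots,y_\ell]$ with $s$ fixing the $y_i$, expanding $g$ in powers of $\alpha_H$, and comparing coefficients to force $c_j = 0$ unless $j \equiv k \pmod{e_H}$, hence $j \geq k$ --- is exactly the standard argument the authors are implicitly invoking, and every step checks out.
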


Results of Shephard and Todd~\cite{shephardtodd} and of Chevalley~\cite{chevalley} distinguish unitary reflection groups as those finite subgroups of $\GL(V)$ for which the invariant subalgebra of the action on the symmetric algebra $S = \Sym(V^*)\cong\BBC[x_1,\ldots,x_\ell] $ yields again a polynomial algebra,
\[
  S^W \cong \BBC[f_1,\ldots,f_\ell]
\]
for homogeneous polynomials $f_1,\dots,f_\ell$ of degrees $d_i = \deg f_i$ with $d_1 \leq \dots \leq d_\ell$.
Let $\exp(W) = \{ e_1 \leq \dots \leq e_\ell \}$ be the \defn{exponents} of~$W$, where $e_i = d_i - 1$ 
and let $\coexp(W) = \{ e_1^* \leq \dots \leq e_\ell^* \}$ be the \defn{coexponents} of~$W$, \cf~\cite[Def.~6.50]{orlikterao:arrangements}.
The group~$W$ is called \defn{well-generated} if it is generated by~$\ell$ reflections, or, equivalently, if
\begin{equation}
	\label{eq:wellgen}
  e_i + e_{\ell+1-i}^* = h \quad\text{for } 1 \leq i \leq \ell,
\end{equation}
see~\cite{OS1980}.
Terao showed in~\cite{terao:freereflections} that the reflection arrangement $\CA = \CA(W)$ of~$W$ is free,
and that the exponents coincide with the coexponents of~$W$, \cf~\cite[Thm.~6.60]{orlikterao:arrangements},
\begin{equation}
\label{eq:expcoexp}
  \exp \CA = \coexp(W).
\end{equation}
This shows that $e_1^* = 1$ (coming from the \emph{Euler derivation}, see below), implying that $e_\ell = h-1$ in the case of well-generated groups.

\medskip

For later usage, we recall from~\cite[Sec.~3]{HMRS2017} the diagonal matrix
\begin{equation}
\label{eq:Binf}
  \Binf \eqdef \tfrac{1}{h}\diag(e_i) - \one_\ell.
\end{equation}

\medskip

Next we recall from~\cite{HMRS2017} the \defn{order multiplicity} $\omega$ of the reflection arrangement
$\CA = \CA(W)$ defined by $\omega(H) = e_H$ for $H \in \CA$.
In other words, the multiplicities are chosen so that the defining polynomial $Q(\CA, \omega)$ of the multi-arrangement $(\CA, \omega)$ is the \defn{discriminant} of~$W$, \cf~\cite[Def.~6.44]{orlikterao:arrangements},
\[
  Q(\CA, \omega) = \prod_{H \in \CA(W)} \alpha_H^{e_H} = QJ,
\]
where $Q = Q_\CA = \prod_{H \in \CA} \alpha_H$ as before, and $J = J_\CA \eqdef \prod_{H \in \CA} \alpha_H^{e(H)-1}$.

\subsection{Flat connections for well-generated unitary reflection groups}
\label{sec:prelimflat}

Throughout this subsection let~$W$ be a well-generated unitary reflection group.
Let $\nabla : \DerF \times \DerF \rightarrow \DerF$ be the connection defined by 
\begin{equation}
  \label{eq:flatconnect} \nabla_\theta(\phi) = \sum_i \theta (p_i) \partial_{x_i}
\end{equation}
for $\theta, \phi \in \DerF$ with $\phi = \sum p_i \partial_{x_i}$, or equivalently, an affine connection which has $\partial_{x_i}$ as a flat section, i.e., $\nabla_\theta(\partial_{x_i}) = 0$.
Recall that $\nabla$ is $F$-linear in the first parameter and $\BBC$-linear in the second, satisfying the Leibniz rule
\[
  \nabla_\theta(p \phi) = \theta(p)\phi + p \nabla_\theta(\phi)
\]
for $\theta,\phi \in \DerF$ and $p \in F$. 
Alternatively, this can be characterized by
\begin{align}
  \nabla_\theta(\phi)(\alpha) = \theta(\phi(\alpha)) \label{eq:defnabla}
\end{align}
for all $\alpha \in V^*$.
Observe that for
~$\theta,\phi$  homogeneous,~\eqref{eq:flatconnect} 
implies that the derivation $\nabla_\theta(\phi)$ is again homogeneous with  
polynomial degree
\begin{equation}
  \label{eq:pdegnabla}\pdeg\big(\nabla_\theta(\phi)\big) = \pdeg(\theta) + \pdeg(\phi) - 1.
\end{equation}

\bigskip

Let $\InvSpecial_1,\ldots,\InvSpecial_\ell$ be the special homogeneous fundamental invariants in $\BBC[\bbx]$ with $\bbx = (x_1,\ldots,x_\ell)$, as given in~\cite[Thm.~5.5]{KMS2015}.
Recall that $\deg\big(\InvSpecial_i\big) = d_i$ and
\begin{equation*}
  \BBC[\InvSpecial_1,\ldots,\InvSpecial_\ell] \cong S^W.
\end{equation*}
Consider indeterminates $\bbt = (t_1,\ldots,t_\ell)$ together with the map $t_i \mapsto \InvSpecial_i$ giving an isomorphism 
\[
  R \eqdef \BBC[\bbt] \cong \BBC[\InvSpecial_1,\ldots,\InvSpecial_\ell].
\]
By slight abuse of notation, here
and in the rest of the paper, the variable $t_i$ and its image $\InvSpecial_i$ under this isomorphism are identified.
Set moreover $T \eqdef \BBC[\bbt'] = \BBC[t_1,\ldots,t_{\ell-1}]$, the subring of~$\BBC[\bbt]$ generated by $\bbt' = (t_1,\ldots,t_{\ell-1})$.

\medskip

As usual, set
\begin{equation*}
  \Jtx \eqdef
    \colvecx (t_1,\ldots,t_\ell) = 
    \begin{bmatrix}
      \partial t_1 / \partial x_1 & \cdots & \partial t_\ell / \partial x_1 \\
      \vdots & \ddots & \vdots \\
      \partial t_1 / \partial x_\ell & \cdots & \partial t_\ell / \partial x_\ell
    \end{bmatrix}  \in \BBC[\bbx]^{\ell\times\ell}
\end{equation*}
with inverse matrix $\Jxt \eqdef \Jtx^{-1} = \rowvect^\tr (x_1,\ldots,x_\ell)$.
It is well-known that $\det \Jtx \doteq J = \prod_{H \in \CA} \alpha_H^{e_H-1}$, see~\cite[Thm.~6.42]{orlikterao:arrangements}.
Here and elsewhere the symbol~$\doteq$ denotes, as usual, equality up to a non-zero complex constant factor.

\medskip

The \defn{primitive vector field} $D \eqdef \partial_{t_\ell} \in \DerR$ is given by 
\begin{equation}
\label{eq:defD}
  D = \det \Jxt
    \begin{vmatrix}
      \frac{\partial t_1}{\partial x_1} & \cdots & \frac{\partial t_{\ell-1}}{\partial x_1} & \frac{\partial}{\partial x_1} \\
      \vdots & \ddots & \vdots & \vdots \\
      \frac{\partial t_1}{\partial x_\ell} & \cdots & \frac{\partial t_{\ell-1}}{\partial x_\ell} & \frac{\partial}{\partial x_\ell} \\
    \end{vmatrix}
    \in \DerF.
\end{equation}
In particular,~$D$ is homogeneous of degree $\pdeg(D) = - e_\ell = -(h-1)$, where we observe that $h = d_\ell > d_{\ell-1}$.
The primitive vector field~$D$ is thus, up to a non-zero complex constant, independent of the given choice of fundamental invariants.

\bigskip

Consider $X \eqdef V \big/ W = \spec(\BBC[\bbt])$ and let $\hh(\bbt) \in R$ be the discriminant of~$W$ given by 
\[
  \hh\big(\InvSpecial_1(\bbx),\ldots,\InvSpecial_\ell(\bbx)\big) = \prod_{H \in \CA} \alpha_H^{e_H}
\]
with vanishing locus $\VanLoc \eqdef \{ \overline p \in X \mid \hh(\overline p) = 0 \}$, \cf~\cite[Def.~6.44]{orlikterao:arrangements}.
Let~$\DerR$ be the $R$-module of logarithmic vector fields, and let
\[
  \DerR(-\log\hh) \eqdef \big\{ \theta \in \DerR \mid \theta\hh \in R\hh \big\}
\]
be the module of logarithmic vector fields along~$\VanLoc$.
We have an $R$-isomorphism between such logarithmic vector fields and~$W$-invariant $S$-derivations,
\begin{equation}
\label{eq:DerRDerSW}
  \DerR(-\log\hh) \cong \DerSW,
\end{equation}
and $\DerR(-\log\hh)$ is a free $R$-module, \cf~\cite[Cor.~6.58]{orlikterao:arrangements}.

\medskip

Bessis showed in~\cite[Thm.~2.4]{Bes2015} that there exists a \defn{system of flat homogeneous derivations} $\{ \eta_1,\ldots,\eta_\ell \}$ of $\DerR(-\log\hh)$ with $\pdeg\eta_i = e_i^*$ being the $i$-th coexponent of~$W$.
This means, its Saito matrix
\begin{equation}
\label{eq:flatderivations}
  M_\eta \eqdef M(\eta_\ell, \ldots, \eta_1) =
  \begin{bmatrix}
    \eta_\ell(t_1) & \cdots & \eta_\ell(t_\ell) \\
    \vdots & \ddots & \vdots \\
    \eta_1(t_1) & \cdots & \eta_1(t_\ell)
  \end{bmatrix}
\end{equation}
decomposes as
\begin{align}
\label{eq:MV}
  M_\eta = t_\ell \one_\ell + \Mprime(\bbt')
\end{align}
with $\Mprime(\bbt') \in \BBC[\bbt']^{\ell\times\ell}$.
As before, we have $\rowvecEta^\tr = M_\eta \rowvect^\tr$.
Moreover, we obtain that $\hh(\bbt)$ is a monic polynomial in~$t_\ell$ with coefficients in $T$, \ie,
\[
  \hh(\bbt) = t_\ell^\ell + a_{\ell-1}(\bbt')t_\ell^{\ell-1} + \ldots + a_{1}(\bbt')t_\ell + a_{0}(\bbt'),
\]
where $ a_{i}(\bbt') \in T$ for $1 \le i \le \ell$.
As observed in~\cite[Lem.~3.8]{KMS2015}, such a system of flat homogeneous derivations is unique, and we have that $\eta_1 = \tfrac{1}{h} \sum d_i t_i \partial_{t_i} \in \DerR$ is the \defn{Euler vector field} mapped to the (scaled) \defn{Euler derivation}
\begin{equation}
\label{eq:Euler}
  E \eqdef \tfrac{1}{h} \sum x_i \partial_{x_i} \in \DerSW
\end{equation}
under the isomorphism in~\eqref{eq:DerRDerSW}, see~\cite[Lem.~3.5]{KMS2015}.

\medskip

We recall from~\cite[Prop.~3.15]{HMRS2017} the following proposition which is the key ingredient in the present considerations.
It involves the diagonal matrix $\Binf$ given in~\eqref{eq:Binf}.

\begin{proposition}
\label{prop:mainingredient}
  We have $T$-isomorphisms
  \begin{equation*}
    \begin{aligned}
    \prim &: \DerR(-\log\hh) \longrightarrow \DerR \\
    \priminv &: \DerR \longrightarrow \DerR(-\log\hh)
    \end{aligned}\label{eq:nablainv}
  \end{equation*}
  given by
  \begin{equation*}
    \begin{aligned}
      \prim\colvect    &= -M_\eta^{-1}(\Binf + \one_\ell) \colvect \\
      \priminv\colvect &= -B^{-1}_\infty \colvecEta = -B^{-1}_\infty M_\eta \colvect.
    \end{aligned}
  \end{equation*}
\end{proposition}

From this explicit description of~$\prim$, the following Hodge filtration of~$\Der_R$ was deduced in ~\cite[Prop.~3.15]{HMRS2017}, generalizing Saito's construction for Coxeter arrangements in~\cite{Sai1993} to the situation for well-generated unitary reflection groups.
Let $\Hodgedec_0$ be the $T$-submodule of $\Der_R$ generated by $\partial_{t_1},\ldots,\partial_{t_\ell}$ and let
\begin{equation}
\label{eq:hodgedec}
  \Hodgedec_k \eqdef \primk[-k](\Hodgedec_0) \text{ for } k \in \BBZ.
\end{equation}
Then the Hodge filtration of $\Der_R$ is given by
\begin{equation}
\label{eq:hodgefil}
  \Hodgefil^{(k)}_0 \eqdef \bigoplus_{i \geq k} \Hodgedec_i.
\end{equation}

%%%%%%%%%%%%%%%%%%%%%%%%%%%%%%%%%%%%%%%%%%%%%%%%%%%%%%%%%%%%%%%%%%%%%%
\section{A general version of Saito's criterion and universal vector fields}
\label{sec:generalarrangements}

Throughout this section, we consider a general multi-arrangement  $(\CA,\nu)$ in $V \cong \BBC^\ell$ with multiplicity function $\nu : \CA \to \BBZ$.

\subsection{A general version of Saito's criterion}

For later usage we generalize Ziegler's version of Saito's criterion~\cite[Thm.~8]{ziegler:multiarrangements} for multi-arrangements from $\BBN$- to $\BBZ$-valued multiplicities.

\begin{theorem}
\label{thm:zieglersaito}
  For a multiplicity function~$\nu : \CA \rightarrow \BBZ$, let $\theta_1, \ldots, \theta_\ell \in \FD(\CA, \nu)$.
  Then the following are equivalent:
  \begin{enumerate}[(i)]
  \item\label{eq:zieglersaito1} $\{\theta_1, \ldots, \theta_\ell\}$ is an ~$S$-basis of $\FD(\CA, \nu)$.
  \item\label{eq:zieglersaito2} $\det \big(M(\theta_1, \ldots, \theta_\ell)\big) \doteq  Q(\CA, \nu)$.
  \end{enumerate}
  Moreover, if each $\theta_i$ is homogeneous, then (i) and (ii) are equivalent 
  to
  \begin{enumerate}[(i)]
    \setcounter{enumi}{2}
    \item\label{eq:zieglersaito3} $\theta_1, \ldots, \theta_\ell$ are linearly independent over~$S$ and $\sum \pdeg \theta_i = |\nu|$.
  \end{enumerate}
\end{theorem}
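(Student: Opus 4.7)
My plan is to adapt Ziegler's proof of Saito's criterion for $\BBN$-valued multiplicities (\cite[Thm.~8]{ziegler:multiarrangements}) by carefully tracking $\alpha_H$-adic valuations along each $H \in \CA$, now allowing negative exponents. The underlying observation is that the local valuation estimates at each hyperplane, which drive the $\BBN$-valued case, remain valid in the $\BBZ$-valued setting since they are derived purely from the local structure of $\FD(\CA,\nu)$ at each hyperplane.

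For (ii)$\Rightarrow$(i), I would use Cramer's rule. The hypothesis $\det M \neq 0$ implies that $\{\theta_1,\ldots,\theta_\ell\}$ is an $F$-basis of $\DerF$, so any $\theta \in \FD(\CA,\nu)$ can be written as $\theta = \sum_i f_i \theta_i$ with $f_i = \det M_i/\det M \in F$, where $M_i$ replaces the $i$-th row of $M$ with $(\theta(x_1),\ldots,\theta(x_\ell))$. In coordinates adapted to a fixed $H \in \CA$ with $\alpha_H = x_1$, each row of $M_i$ has $\alpha_H$-adic valuation $\geq \nu(H)$ in its $\alpha_H$-column and $\geq 0$ in the remaining columns, hence $v_H(\det M_i) \geq \nu(H)$. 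Combined with $v_H(\det M) = \nu(H)$ from (ii), this yields $v_H(f_i) \geq 0$ for every $H \in \CA$; since $f_i \in S' = S[Q_\CA^{-1}]$ has no pole along any such $H$, it follows that $f_i \in S$. Linear independence of the $\theta_i$ over $S$ follows from $\det M \neq 0$.

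For (i)$\Rightarrow$(ii), I would localize at each height-one prime $\langle\alpha_H\rangle$ of $S$. In coordinates adapted to $H$, the $\SalphaH$-module $\FD(\CA,\nu) \otimes_S \SalphaH$ has a canonical basis $\{\alpha_H^{\nu(H)}\partial_{x_1},\partial_{x_2},\ldots,\partial_{x_\ell}\}$ whose Saito determinant has $\alpha_H$-adic valuation exactly $\nu(H)$. Since $\{\theta_i\}$ is also an $\SalphaH$-basis locally, the change-of-basis matrix has unit determinant, forcing $v_H(\det M) = \nu(H)$ for every $H \in \CA$. Writing $\det M = g \cdot Q(\CA,\nu)$ in $F$, this gives $g \in S$ with $v_H(g) = 0$ at every $H$. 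Localizing at a generic point $p_0 \notin \bigcup_{H \in \CA} H$, the module $\FD(\CA,\nu)$ coincides with $\DerS$, so $\det M(p_0) \neq 0$; hence the zero locus of $g$ lies inside $\bigcup_{H \in \CA} H$, and Hilbert's Nullstellensatz shows that $g$ divides a power of $Q_\CA$. Combined with $v_H(g) = 0$ for every $H$, we conclude $g \in \BBC^\times$. The equivalence (ii)$\iff$(iii) in the homogeneous case is then immediate from the degree count $\deg \det M = \sum_i \pdeg \theta_i = |\nu| = \deg Q(\CA,\nu)$, together with the fact that $S$-linear independence of homogeneous derivations coincides with $\det M \neq 0$.

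The main subtlety compared to Ziegler's original proof is the bookkeeping of negative valuations. Since $\FD(\CA,\nu)$ is no longer a submodule of $\DerS$ when $\nu$ has negative values, one must work throughout with rational functions in $S'$ and verify that the valuation estimates still pin the Cramer coefficients down to $S$ rather than merely to $S'$. In the converse direction, the Nullstellensatz step that forces $g$ to be constant replaces the cleaner polynomial divisibility argument that is available in the $\BBN$-valued case.
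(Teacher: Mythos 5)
Your argument is correct, and it splits naturally into a part that mirrors the paper and a part that does not. For (ii)$\Rightarrow$(i) you are doing essentially what the paper does: Cramer's rule together with the valuation estimate $v_H(\theta_1\wedge\cdots\wedge\theta_\ell)\geq \nu(H)$, which is exactly \Cref{lem:zieglersaito}; the only cosmetic difference is that the paper clears denominators by multiplying with $Q_-$ (via \Cref{lem:nushift}) and works with polynomial matrices, whereas you keep everything in $S'=S[Q_\CA^{-1}]$ and conclude $f_i\in S$ from $v_H(f_i)\geq 0$ at every $H$. For (i)$\Rightarrow$(ii) you take a genuinely different route: the paper writes $\det M\doteq fQ_\nu$ and pins $f$ down by expressing two explicit families of test derivations, such as $Q_+\partial_{\alpha_H},Q'_+\partial_{x_2},\ldots,Q'_+\partial_{x_\ell}$, in the basis $\{\theta_i\}$, deducing that $f$ divides $Q_-$ and divides $Q_+^{\ell-1}$ and hence is constant since these are coprime; you instead compute $v_H(\det M)=\nu(H)$ exactly by comparing with a canonical local basis of the localized module, and then kill the remaining factor by a generic-point plus Nullstellensatz argument. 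Both are valid; yours is closer to the classical ``locally free in codimension one'' proof of Saito's criterion, while the paper's stays entirely inside explicit polynomial manipulations. The one assertion you should justify rather than merely state is that localization commutes with the defining intersection, i.e.\ that $\FD(\CA,\nu)\otimes_S\SalphaH$ really equals $\alpha_H^{\nu(H)}\SalphaH\,\partial_{\alpha_H}\oplus\bigoplus_{i\geq 2}\SalphaH\,\partial_{x_i}$ (surjectivity needs a local section to be multiplied by a suitable power of $Q/\alpha_H$ and of the $\alpha_{H'}$, $H'\neq H$, to produce an element of $\FD(\CA,\nu)$), and likewise that the stalk at a point off the arrangement is all of the derivation module; these are routine but not tautological, and the paper's test-derivation argument is designed precisely to avoid having to verify them.
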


We require two lemmas for the proof of \Cref{thm:zieglersaito}.
Recall from \eqref{eq:qplus} that we write $Q_\nu = Q_+/Q_-$ with $Q_+, Q_-\in S$
for the defining rational function $Q_\nu =Q(\CA, \nu)$.

\begin{lemma}
\label{lem:nushift}
  Let $\theta \in \FD(\CA,\nu)$.
  Then $Q_-\theta \in \DerS$.
\end{lemma}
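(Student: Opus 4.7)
\emph{Proof plan.} The plan is a local-to-global argument: bound the $\alpha_H$-adic order of $\theta(\beta)$ in each local ring $\SalphaH$ for $\beta \in V^*$, observe that multiplication by $Q_-$ absorbs any resulting poles, and then invoke the UFD property of $S$ to conclude $(Q_-\theta)(\beta) \in S$ for every $\beta \in V^*$.

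For the local step, fix $H \in \CA$ and $\beta \in V^*$. Using the normalization $I^*(\alpha_H,\alpha_H) = 1$, write $\beta = c\alpha_H + \beta'$ with $c = I^*(\alpha_H,\beta)$ and $I^*(\alpha_H,\beta') = 0$. Since $\theta \in \FDinfty$ we have $\theta(\beta') \in \SalphaH$, and since $\theta \in \FD(\CA,\nu)$ we have $\theta(\alpha_H) \in \alpha_H^{\nu(H)}\SalphaH$. If $\nu(H) \geq 0$, both summands of $\theta(\beta) = c\,\theta(\alpha_H) + \theta(\beta')$ already lie in $\SalphaH$, hence $(Q_-\theta)(\beta) \in \SalphaH$ since $Q_- \in S$. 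If $\nu(H) < 0$, then $\theta(\beta) \in \alpha_H^{\nu(H)}\SalphaH$, and the factor $\alpha_H^{-\nu(H)}$ of $Q_-$ exactly cancels the worst possible pole, again giving $(Q_-\theta)(\beta) \in \SalphaH$.

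For the global step, $\theta \in \Der_{S'}$ yields $(Q_-\theta)(\beta) \in S' = S[Q_\CA^{-1}]$. Inside the fraction field $F$, any element of $S'$ admits a unique reduced factorization whose irreducible denominators can only be of the form $\alpha_H$ with $H \in \CA$; regularity at every $\SalphaH$ forbids each such factor, so $(Q_-\theta)(\beta) \in S$. Since $V^*$ generates $S$ as a $\BBC$-algebra and a derivation is determined by its values on generators, this gives $Q_-\theta \in \DerS$.

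The only real point of care is the case distinction on the sign of $\nu(H)$ together with the $I^*$-orthogonal decomposition of $\beta$; once these are handled uniformly, both the local pole-cancellation and the concluding UFD step are routine.
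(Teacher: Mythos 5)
Your proof is correct and follows essentially the same route as the paper's: a hyperplane-by-hyperplane local analysis (the paper uses an orthonormal basis $\alpha_H = x_1,\ldots,x_\ell$ where you use the orthogonal decomposition $\beta = c\alpha_H + \beta'$, which amounts to the same thing), cancellation of the pole of order $-\nu(H)$ by the factor $\alpha_H^{-\nu(H)}$ of $Q_-$, and globalization via $S' \cap \bigcap_{H}\SalphaH = S$. No gaps.
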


\begin{proof}
  Fix $H \in \CA$ and let $\alpha_H = x_1,x_2,\ldots,x_\ell$ be an orthonormal basis of~$V^*$.
  Set $Q \eqdef Q_\CA$ and $Q_H \eqdef Q / \alpha_H$.
  Write $\theta = \sum_i (f_i / Q^n) \partial_{x_i}$ with $f_i \in S$ and $n \geq 0$.
  Then $\theta \in \FD(\CA,\nu)$ means that
  \[
    f_1 \in \alpha_H^{n+\nu(H)} \cdot S \qquad
    f_i \in \alpha_H^{n} \cdot S \text{ for } 2 \leq i \leq \ell,
  \]
  or, equivalently, that $\theta = \sum_i (f'_i / Q_H^n) \partial_{x_i}$ with
  \begin{equation}
  \label{eq:nushift}
    f'_1 \in \alpha_H^{\nu(H)} \cdot S \qquad
    f'_i \in S \text{ for } 2 \leq i \leq \ell.
  \end{equation}
  With $Q_H^{-n} \in \SalphaH$, we obtain that $Q_- f_i \in \SalphaH$ for all $1 \leq i \leq \ell$ and thus $\theta \in \Der_{\SalphaH}$.
  Since this holds for any $H \in \CA$, we conclude that $Q_-\theta \in \DerS$.
\end{proof}

In the subsequent arguments, we use the following abbreviation.
Given $\theta_1,\ldots,\theta_\ell \in \DerF$, we set
\[
  \theta_1 \wedge \dots \wedge \theta_\ell \eqdef \det\big(M(\theta_1,\ldots,\theta_\ell)\big) = \det\big( \theta_i(x_j) \big)_{1 \leq i,j \leq \ell} \in F.
\]

\begin{lemma}
\label{lem:zieglersaito}
  Let $\nu : \CA \rightarrow \BBZ$ and let $\theta_1,\ldots,\theta_\ell \in \FD(\CA,\nu)$.
  Then
  \[
    \theta_1 \wedge \dots \wedge \theta_\ell \in Q_\nu \cdot S.
  \]
\end{lemma}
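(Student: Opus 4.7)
The plan is to argue hyperplane by hyperplane via a valuation-theoretic adaptation of the classical Ziegler--Saito argument, suitably modified to accommodate negative multiplicities.

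First I would record that, since each $\theta_i \in \FDinfty \subseteq \Der_{S'}$ for $S' = S[Q_\CA^{-1}]$, all entries $\theta_i(x_j)$ of the Saito matrix lie in $S'$, and therefore the determinant $f \eqdef \theta_1 \wedge \dots \wedge \theta_\ell$ is an element of $S'$. In particular, the only irreducible polynomials that can appear in the denominator of $f$ (when viewed as an element of $F$) are the linear forms $\alpha_H$ for $H \in \CA$. This reduces the problem to checking, for every $H \in \CA$, the single valuation inequality $v_H(f) \geq \nu(H)$, where $v_H$ denotes the $\alpha_H$-adic valuation on $F$.

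Next, I would fix $H \in \CA$ and pass to an orthonormal basis $y_1, \dots, y_\ell$ of $V^*$ with $y_1 = \alpha_H$ (possible since $I^*(\alpha_H,\alpha_H)=1$). In these coordinates, Definition~\ref{def:logarithmicvectorfield} directly yields
\[
  \theta_i(y_1) = \theta_i(\alpha_H) \in \alpha_H^{\nu(H)}\SalphaH, \qquad \theta_i(y_j) \in \SalphaH \text{ for } 2 \leq j \leq \ell,
\]
the latter because $I^*(\alpha_H, y_j) = 0$. Thus in the $y$-coordinates the Saito matrix has its first column in $\alpha_H^{\nu(H)} \SalphaH$ and all other columns in $\SalphaH$, so cofactor expansion along the first column gives $\det M(\theta_1,\dots,\theta_\ell; y) \in \alpha_H^{\nu(H)}\SalphaH$. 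The change of coordinates $y \mapsto x$ is a unitary transformation, so $\det M$ in the original basis differs from this only by a non-zero scalar, yielding $v_H(f) \geq \nu(H)$.

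Finally, combining the two steps: $f \in S'$ with $v_H(f) \geq \nu(H)$ for every $H \in \CA$ forces $f / Q_\nu \in S$, because $f/Q_\nu$ has non-negative valuation at each $\alpha_H$ and no other possible denominators. The conceptual obstacle to watch out for is precisely the one that distinguishes this from Ziegler's original statement: when $\nu(H) < 0$, $f$ is no longer a polynomial, so the usual ``coprime divisibility'' phrasing has to be replaced by the valuation argument above, but the local computation itself is sign-agnostic and goes through uniformly. I do not expect to need Lemma~\ref{lem:nushift} in this approach, although one could alternatively reduce to the $\BBN$-valued case by multiplying through by $Q_-$ and then re-extracting powers of $Q_-$ at the end.
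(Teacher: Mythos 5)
Your proposal is correct and follows essentially the same route as the paper: fix $H\in\CA$, pass to an orthonormal basis with first coordinate $\alpha_H$, observe that the first column of the Saito matrix lies in $\alpha_H^{\nu(H)}\SalphaH$ and the remaining columns in $\SalphaH$, conclude $v_H(\theta_1\wedge\dots\wedge\theta_\ell)\geq\nu(H)$, and intersect over all $H$. Your write-up merely makes explicit two points the paper leaves implicit (that the determinant lies in $S'$ a priori, and that the unitary change of basis only rescales it), which is harmless.
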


\begin{proof}
  Let $H \in \CA$ and consider the orthonormal basis $\alpha_H = x_1,x_2,\ldots,x_\ell$ of~$V^*$.
  Since $\theta_i \in \FD(\CA,\nu)$, we have
  \[
    \theta_i(\alpha_H) \in \alpha_H^{\nu(H)} \cdot \SalphaH \quad\text{and}\quad \theta_i(x_j) \in \SalphaH \text{ for } 2 \leq j \leq \ell.
  \]
  This implies that $\theta_1 \wedge \dots \wedge \theta_\ell \in \alpha_H^{\nu(H)}\cdot \SalphaH$.
  As this holds for any $H \in \CA$, the lemma follows.
\end{proof}

\begin{proof}[Proof of \Cref{thm:zieglersaito}]
  First we show that~\eqref{eq:zieglersaito2} implies~\eqref{eq:zieglersaito1}.
  As $\theta_1 \wedge \dots \wedge \theta_\ell \neq 0$, we readily see that $\theta_1,\ldots,\theta_\ell$ are $S$-independent. So we aim to show that they span $\FD(\CA,\nu)$.
  To this end, let $\theta \in \FD(\CA,\nu)$.
  Since $\theta_i \in \FD(\CA,\nu)$ for $1\leq i \leq \ell$, we have $Q_-\theta_i \in \DerS$ by \Cref{lem:nushift}.
  Let~$M := M(Q_-\theta_1,\ldots,Q_-\theta_\ell)$ and observe that
  \[
    \det(M) = Q_-\theta_1 \wedge \dots \wedge Q_-\theta_\ell \doteq Q_-^\ell Q_\nu.
  \]
  Set $N = M^{-1}$ and $\tilde N = \det(M) N \in S^{\ell\times\ell}$.
  Since
  \[
    \det(M)\partial_{x_i} = \sum_j \tilde N_{ij}Q_-\theta_j,
  \]
  we see that $Q_-\theta_1,\ldots,Q_-\theta_\ell$ generate $Q_-^\ell Q_\nu \cdot \DerS$ over~$S$.
  With $Q_-\theta \in \DerS$, we obtain
  \[
    \det(M)Q_-\theta = Q_+Q_-^\ell \theta = \sum_{i=1}^\ell f_i Q_- \theta_i
  \]
  for $f_1,\ldots,f_\ell \in S$, or, equivalently, $Q_+Q_-^{\ell-1} \theta = \sum f_i \theta_i$.
  It thus remains to show that each $f_i$ is divisible by $Q_+Q_-^{\ell-1} = Q_\nu Q_-^\ell$.
  We have
  \begin{align*}
    f_i Q_+ &\doteq f_i Q_- ( \theta_1 \wedge \dots \wedge \theta_\ell ) \\
            &= \theta_1 \wedge \dots \wedge \theta_{i-1} \wedge f_i Q_- \theta_i \wedge \theta_{i+1} \wedge \dots \wedge \theta_\ell \\
            &= Q_+ Q_-^\ell  (\theta_1 \wedge \dots \wedge \theta_{i-1} \wedge \theta \wedge \theta_{i+1} \wedge \dots \wedge \theta_\ell) \\
            &= Q_+ Q_-^\ell g_i Q_\nu,
  \end{align*}
  for some $g_i \in S$ given by \Cref{lem:zieglersaito}.
  This implies $f_i = Q_\nu Q_-^\ell g_i$, as desired.

  \medskip

  We next show~\eqref{eq:zieglersaito1} implies~\eqref{eq:zieglersaito2}.
  By \Cref{lem:zieglersaito}, we may write $\theta_1\wedge \dots\wedge \theta_\ell = f Q_\nu = fQ_+/Q_-$ for some $f \in S$.
  Since $\theta_1,\ldots,\theta_\ell$ are $S$-independent, we have~$f \neq 0$.
  We thus need to show that~$f$ is constant.
  Now, let $H \in \CA$ and again assume that $\alpha_H = x_1,\ldots,x_\ell$ are orthonormal coordinates.
  Set
  \[
    Q_+ = \alpha_H^{m_+} Q'_+ \quad\text{and}\quad Q_- = \alpha_H^{m_-} Q'_-
  \]
  such that $Q'_+$ and $Q'_-$ are both not divisible by $\alpha_H$.
  
  First, observe that
  \[
    Q_+\partial_{\alpha_H}, Q'_+\partial_{x_2},\ldots,Q'_+\partial_{x_\ell} \in \FD(\CA,\nu),
  \]
  by definition.
  This implies
  \begin{align*}
    Q_+(Q'_+)^{\ell-1} &= Q_+\partial_{\alpha_H} \wedge Q'_+\partial_{x_2} \wedge\ldots\wedge Q'_+\partial_{x_\ell} \\
                       &= \det(g_{ij}) \big(\theta_1 \wedge \dots \wedge \theta_\ell\big) \\
                       &= gf Q_+/Q_-
  \end{align*}
  where $(g_{ij})$ is the matrix with entries in~$S$ expressing $Q_+\partial_{\alpha_H}, Q'_+\partial_{x_2},\ldots,Q'_+\partial_{x_\ell}$ in terms of the basis $\theta_1,\dots,\theta_\ell$, and where we wrote $g = \det(g_{ij}) \in S$.
  This means that~$f$ is a factor of $Q_-(Q'_+)^{\ell-1}$ for the given~$H \in \CA$.
  Repeating this argument for every hyperplane in~$\CA$, we obtain that~$f$ is a factor of $Q_-$.

  Second, we also observe that
  \[
    \frac{Q_+}{\alpha_H^{m_-}}\partial_{\alpha_H}, Q_+\partial_{x_2},\ldots,Q_+\partial_{x_\ell} \in \FD(\CA,\nu)
  \]
  and analogously deduce that
  \[
    \frac{Q_+^\ell}{\alpha_H^{m_-}} = g'f Q_+/Q_-
  \]
  for some $g' \in S$.
  This now means that~$f$ is a factor of $Q'_-(Q_+)^{\ell-1}$ for the given~$H \in \CA$.
  Once again repeating this for every hyperplane in~$\CA$, we obtain that~$f$ is a factor of $(Q_+)^{\ell-1}$.
  As $Q_-$ and $Q_+$ do not have any common non-scalar factors, we thus deduce that~$f$ is constant.

  The argument for~\eqref{eq:zieglersaito3} is standard.
  If $\{\theta_1,\ldots,\theta_\ell\}$ forms a homogeneous $S$-basis of $\FD(\CA,\nu)$, then~\eqref{eq:zieglersaito3} is obviously satisfied.
  Vice versa,~\eqref{eq:zieglersaito3} implies that the determinant of $M(\theta_1,\ldots,\theta_\ell)$ equals $fQ_\nu$ for some $f \in S \setminus \{0\}$ by independence, and that $\deg(f) = 0$, because the degrees of $\det \big(M(\theta_1,\ldots,\theta_\ell)\big)$ and of $Q_\nu$ coincide.
\end{proof}

\subsection{Properties of universal vector fields on multi-arrangements}
\label{sec:universality1}

In this section, we define universal vector fields for general arrangements and arbitrary multiplicity functions and explore properties of logarithmic vector fields that follow from the existence of universal vector fields.
Throughout, we have separated the arguments as detailed as possible to clarify their exact interplay.
In \Cref{sec:universality2}, we exhibit natural occurrences and properties of universal vector fields in the case of reflection arrangements.
We start with the definition of universality depending on a multiplicity $\nu : \CA \rightarrow \BBZ$ which is a slight generalization of the $\mathbf{k}$-universality in the real case in~\cite[Def.~2.2]{Wak2011}.

\medskip

Recall the definition of the affine connection~$\nabla$ in~\eqref{eq:flatconnect} having $\partial_{x_i}$ as a flat section.
\begin{definition}
\label{def:universal}
  Let $(\CA,\nu)$ be a multi-arrangement for a multiplicity function $\nu : \CA \rightarrow \BBZ$ and let $\zeta \in \FDinfty$ be homogeneous.
  Then~$\zeta$ is called \defn{$\nu$-universal} provided the map
  \[
    \Phi_\zeta :
      \DerF \longrightarrow \FD(\CA,\nu); \quad
      \theta \longmapsto \nabla_\theta(\zeta)
  \]
  is an isomorphism of $S$-modules.
\end{definition}

Observe that $\Phi_\zeta$ is $S$-linear by definition.
The $\nu$-universality of~$\zeta$ thus means that $\Phi_\zeta : \DerS \longrightarrow \FD(\CA,\nu)$ is well-defined and bijective.
In particular, this implies for an $\nu$-universal~$\zeta$ that $(\CA,\nu)$ is free and
\[
  \big\{\nabla_{\partial_{x_i}}(\zeta) \mid 1 \leq i \leq \ell \big\}
\]
is an $S$-module basis of $\FD(\CA,\nu)$.

\medskip

We begin with the following observation from~\cite[Ex.~2.3]{Wak2011}.

\begin{lemma}
  The Euler derivation $E$ defined in~\eqref{eq:Euler} is $\mathbf{0}$-universal.
\end{lemma}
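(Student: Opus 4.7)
The plan is essentially a direct unpacking of the definitions, since the Euler derivation is the simplest possible test case for universality. First I would identify the target of~$\Phi_E$: by definition $\FD(\CA,\mathbf{0}) = \DerS$ (this is already noted after \Cref{def:logarithmicvectorfield}, and also follows immediately from \Cref{lem:compmultiplicities}), and clearly $E \in \DerS \subset \FDinfty$, so $\Phi_E$ is a well-defined $S$-linear map $\DerS \to \DerS$.

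The heart of the argument is a one-line calculation using the formula~\eqref{eq:flatconnect}. Writing $E = \sum_i (x_i/h)\,\partial_{x_i}$, so that the coefficients $p_i$ in the $\partial_{x_j}$-basis are $p_i = x_i/h$, we have for any $\theta \in \DerS$
\[
  \nabla_\theta(E) \;=\; \sum_{i=1}^\ell \theta(x_i/h)\,\partial_{x_i} \;=\; \tfrac{1}{h} \sum_{i=1}^\ell \theta(x_i)\,\partial_{x_i} \;=\; \tfrac{1}{h}\,\theta,
\]
the last equality being the standard expansion of a derivation in the basis $\{\partial_{x_i}\}$. Thus $\Phi_E$ is scalar multiplication by $1/h$ on $\DerS$, which is patently an $S$-module isomorphism, and there is no real obstacle to overcome. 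One could equivalently verify this via~\eqref{eq:defnabla} by checking $\nabla_\theta(E)(\alpha) = \theta(E(\alpha)) = \tfrac{1}{h}\theta(\alpha)$ for every $\alpha \in V^*$, which extends $S$-linearly from $\alpha \in \{x_1,\ldots,x_\ell\}$.
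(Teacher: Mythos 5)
Your proof is correct and is essentially the paper's own argument: the paper's proof is the one-line computation $\Phi_E(\delta)=\nabla_\delta(E)=\delta$ for $\delta\in\Der_S$, combined with $\FD(\CA,\mathbf{0})=\Der_S$. You are in fact slightly more careful than the paper, since with the normalization $E=\tfrac{1}{h}\sum x_i\partial_{x_i}$ from~\eqref{eq:Euler} one gets $\nabla_\theta(E)=\tfrac{1}{h}\theta$ rather than $\theta$; this harmless scalar does not affect the conclusion that $\Phi_E$ is an $S$-module isomorphism.
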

\begin{proof}
  Since $\Phi_E(\delta) = \nabla_\delta(E) = \delta$ for any $\delta \in \DerS$, the statement follows.
\end{proof}

\begin{lemma}
\label{lem:nablaoninf}
  Let $\theta \in \DerS$ and $\zeta \in \FDinfty$.
  Then $\nabla_\theta(\zeta) \in \FDinfty$.
\end{lemma}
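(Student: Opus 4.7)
The plan is to use the alternative characterization from~\eqref{eq:defnabla}, namely $\nabla_\theta(\zeta)(\alpha) = \theta(\zeta(\alpha))$ for all $\alpha \in V^*$. This reduces the statement to verifying that $\theta$ applied to $\zeta(\alpha)$ lands in the correct localization of~$S$, i.e., it converts the lemma into a book-keeping exercise about how the derivation~$\theta$ interacts with localizations.

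First, I would check that $\nabla_\theta(\zeta) \in \Der_{S'}$ with $S' = S[Q_\CA^{-1}]$. Since $\zeta \in \FDinfty \subset \Der_{S'}$, for every $\alpha \in V^*$ we have $\zeta(\alpha) \in S'$. The derivation $\theta \in \DerS$ admits a unique extension (via the quotient rule) to a $\BBC$-derivation of any localization of~$S$, in particular of~$S'$. Consequently $\theta(\zeta(\alpha)) \in S'$, so $\nabla_\theta(\zeta)$ maps $V^*$ into $S'$ and thus lies in $\Der_{S'}$.

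Second, I would fix an arbitrary $H \in \CA$ together with $\beta \in V^*$ satisfying $I^*(\alpha_H,\beta) = 0$, and verify the defining containment $\nabla_\theta(\zeta)(\beta) \in \SalphaH$. Since $\zeta \in \FDinfty$, the hypothesis on~$\beta$ gives $\zeta(\beta) \in \SalphaH$. Again using that $\theta$ extends uniquely to a derivation of the localization $\SalphaH$ of~$S$ at the prime $\langle \alpha_H \rangle$, we conclude $\theta(\zeta(\beta)) \in \SalphaH$. By~\eqref{eq:defnabla} this element coincides with $\nabla_\theta(\zeta)(\beta)$, so the required condition holds. Since $H$ and $\beta$ were arbitrary, this proves $\nabla_\theta(\zeta) \in \FDinfty$.

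There is no real obstacle: the only non-trivial ingredient is the standard algebraic fact that a $\BBC$-derivation of~$S$ extends uniquely to any localization of~$S$, which is routine. The proof is a direct unpacking of the definitions of $\nabla$ and of $\FDinfty$, and is effectively a one-line argument once~\eqref{eq:defnabla} is invoked.
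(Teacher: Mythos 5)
Your proposal is correct and follows essentially the same route as the paper: fix $H$ and $\beta$ with $I^*(\alpha_H,\beta)=0$, use~\eqref{eq:defnabla} to reduce to showing $\theta(\zeta(\beta))\in\SalphaH$, and conclude because a derivation of~$S$ extends to the localization. The paper simply makes that last step explicit by writing $\zeta(\beta)=f/\widetilde Q_H$ with $\widetilde Q_H=(Q/\alpha_H)^n$ and applying the quotient rule, which is exactly your abstract localization argument spelled out.
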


\begin{proof}
  Let $H \in \CA$ and let $\beta \in V^*$ such that $I^*(\alpha_H,\beta) = 0$.
  Since $\zeta \in \FDinfty$, we have $\zeta(\beta) = f/Q_H$ for $f \in S$ and $\widetilde Q_H = (\frac{Q}{\alpha_H})^n$ for some $n \geq 0$.
  Using \eqref{eq:defnabla}, we obtain
  \[
    \nabla_\theta(\zeta)(\beta)
      = \theta(f/\widetilde Q_H)
      = \frac{1}{\widetilde Q_H^2}(\theta(f)\widetilde Q_H - f\theta(\widetilde Q_H)).
  \]
  Since $\theta \in \DerS$ by assumption, we obtain $\nabla_\theta(\zeta)(\beta) \in \SalphaH$.
  This holds for any $H \in \CA$ and we deduce the statement.
\end{proof}

\begin{lemma}
\label{lem:booleanmultiplicitywelldefined}
  Let $\mu : \CA \rightarrow \{0,1\}$ and let $\zeta \in \FDinfty$.
  Then $\zeta \in \FD(\CA,\nu+1)$ if and only if $\Phi_\zeta : \FD(\CA,\mu) \longrightarrow \FD(\CA,\nu+\mu)$ is well-defined.
\end{lemma}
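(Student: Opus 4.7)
The plan is to reduce both implications of the lemma to a hyperplane-by-hyperplane analysis, exploiting the identity $\nabla_\theta(\zeta)(\alpha_H) = \theta(\zeta(\alpha_H))$ from \eqref{eq:defnabla} together with the Leibniz rule. The arguments parallel those of \Cref{lem:nablaoninf}, but track the precise $\alpha_H$-adic order of the relevant rational functions.

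For the forward direction, assume $\zeta \in \FD(\CA,\nu+1)$ and fix $\theta \in \FD(\CA,\mu)$. The containment $\nabla_\theta(\zeta) \in \FDinfty$ is immediate from \Cref{lem:nablaoninf}, so it remains to check the order condition at each $H \in \CA$. Writing $\zeta(\alpha_H) = \alpha_H^{\nu(H)+1} g$ with $g \in \SalphaH$, the Leibniz rule splits $\theta(\zeta(\alpha_H))$ into a first summand $(\nu(H)+1)\,\alpha_H^{\nu(H)}\,\theta(\alpha_H)\,g$ and a second summand $\alpha_H^{\nu(H)+1}\,\theta(g)$. The second summand always has order at least $\nu(H)+1 \geq \nu(H)+\mu(H)$. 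For the first summand, a case analysis on $\mu(H) \in \{0,1\}$ suffices: if $\mu(H)=0$ the bound $\nu(H)$ is already automatic, and if $\mu(H)=1$ the hypothesis $\theta(\alpha_H) \in \alpha_H\SalphaH$ provides the extra factor of $\alpha_H$ needed to reach $\alpha_H^{\nu(H)+1}\SalphaH$.

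For the backward direction, my first step is to feed the Euler derivation $E$ from \eqref{eq:Euler} through $\Phi_\zeta$, noting that $E \in \FD(\CA,1) \subseteq \FD(\CA,\mu)$ by \Cref{lem:compmultiplicities}. Since $\nabla_E(\zeta) = \tfrac{\pdeg\zeta}{h}\,\zeta$, well-definedness places a nonzero scalar multiple of $\zeta$ inside $\FD(\CA,\nu+\mu)$, which already settles the desired divisibility at every $H$ with $\mu(H)=1$. For each hyperplane $H_0$ with $\mu(H_0)=0$, my second step is to construct an auxiliary test derivation $\theta_0 \eqdef Q'\,\theta_v$, where $Q' \eqdef \prod_{H:\,\mu(H)=1}\alpha_H$ and $\theta_v \in \DerS$ is the constant derivation determined by $\theta_v(\alpha) = \alpha(v)$ for any $v \notin H_0$. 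A direct verification shows $\theta_0 \in \FD(\CA,\mu)$, while the choice of $v$ and the absence of $\alpha_{H_0}$ from $Q'$ guarantee that $\theta_0(\alpha_{H_0})$ is a unit at $H_0$. Rerunning the Leibniz computation above with $\theta_0$ at $H_0$ then forces the extra factor of $\alpha_{H_0}$ that promotes $\zeta(\alpha_{H_0}) \in \alpha_{H_0}^{\nu(H_0)}\,S_{\langle\alpha_{H_0}\rangle}$ to $\zeta(\alpha_{H_0}) \in \alpha_{H_0}^{\nu(H_0)+1}\,S_{\langle\alpha_{H_0}\rangle}$, completing the argument.

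The hardest part is precisely this last move in the backward direction: the Euler test alone only produces $\zeta \in \FD(\CA,\nu+\mu)$, which is strictly weaker than $\FD(\CA,\nu+1)$ wherever $\mu$ vanishes. Producing a derivation in $\FD(\CA,\mu)$ whose action on $\alpha_{H_0}$ is a unit modulo $\alpha_{H_0}$---so that the Leibniz rule can detect the sharper order of vanishing---is the delicate point; the explicit combination $Q'\theta_v$ succeeds precisely because $H_0$ contributes no factor to $Q'$.
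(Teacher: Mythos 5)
Your forward direction is correct and coincides with the paper's argument: apply the Leibniz rule to $\zeta(\alpha_H)=\alpha_H^{\nu(H)+1}g$ and split on $\mu(H)\in\{0,1\}$, the hypothesis $\theta(\alpha_H)\in\alpha_H S$ supplying the extra factor of $\alpha_H$ exactly as in \eqref{eq:derivationoffraction1}. Your handling of the hyperplanes with $\mu(H_0)=0$ in the converse is also essentially the paper's: your test derivation $Q'\theta_v$ is (up to the choice of $v$) the derivation $Q_\mu\partial_{\alpha_{H_0}}$ used there, since $Q'=Q_\mu$, and the mechanism --- a derivation in $\FD(\CA,\mu)$ whose value on $\alpha_{H_0}$ is a unit in $S_{\langle\alpha_{H_0}\rangle}$, so that the leading Leibniz term detects the exact $\alpha_{H_0}$-adic order of $\zeta(\alpha_{H_0})$ --- is the same.

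The gap is in your first step of the converse, the Euler test for the hyperplanes with $\mu(H)=1$. The identity $\nabla_E(\zeta)=\tfrac{\pdeg\zeta}{h}\,\zeta$ presupposes that $\zeta$ is homogeneous, which the lemma does not assume; more seriously, even for homogeneous $\zeta$ the scalar vanishes when $\pdeg\zeta=0$ (e.g.\ $\zeta=E$ itself, which occurs in the paper's applications as the case $k=0$ of $\primk(E)$), so that $\nabla_E(\zeta)=0$ and well-definedness of $\Phi_\zeta$ yields no information whatsoever about $\zeta$. The assertion that a \emph{nonzero} scalar multiple of $\zeta$ lands in $\FD(\CA,\nu+\mu)$ is therefore false in general, and the $\mu(H)=1$ case of the converse is left unproved. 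The repair is exactly what the paper does: run your second step at \emph{every} hyperplane. For $\mu(H_0)=1$ the derivation $Q_\mu\partial_{\alpha_{H_0}}\in\FD(\CA,\mu)$ sends $\alpha_{H_0}$ to an element of exact order one, and the same Leibniz computation, applied to $\zeta(\alpha_{H_0})=\alpha_{H_0}^{\tau(H_0)+1}f$ with $f$ of order zero (i.e.\ tracking the exact order via the paper's maximal multiplicity $\tau$, rather than merely ``promoting'' a lower bound), shows the image has exact order $\tau(H_0)+1$, whence $\tau(H_0)\geq\nu(H_0)$ by well-definedness. (Both your argument and the paper's share the minor caveat that the leading term carries the coefficient $\tau(H_0)+1$, which must be nonzero for the exact-order claim; this does not distinguish the two proofs.)
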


\begin{proof}
  Let $\zeta \in \FD(\CA,\nu+1)$.
  We aim to show that $\nabla_\theta(\zeta) \in \FD(\CA,\nu+\mu)$ for all $\theta \in \FD(\CA,\mu)$.
  By \Cref{lem:nablaoninf} we have that $\nabla_\theta(\zeta) \in \FDinfty$ in this case.
  The following argument is analogous to the proof of the previous lemma.
  Let $H \in \CA$.
  Since $\zeta \in \FD(\CA,\nu+1)$, we have that
  \[
    \zeta(\alpha_H) = \alpha_H^{\nu(H)+1}f/Q_H
  \]
  for $f \in S$ and $Q_H = (\frac{Q}{\alpha_H})^n$ for some $n \geq 0$.
  Using \eqref{eq:defnabla}, for $\theta \in \FD(\CA,\mu)$ we have 
  \begin{equation}
    \begin{aligned}
      \nabla_\theta(\zeta)(\alpha_H)
        &= \theta\big(\alpha_H^{\nu(H)+1}f/Q_H\big) \\
        &= \frac{1}{Q_H^2}\big( \theta(\alpha_H^{\nu(H)+1})fQ_H + \alpha_H^{\nu(H)+1}\theta(f)Q_H - \alpha_H^{\nu(H)+1}f\theta(Q_H) \big) \\[5pt]
        &= \frac{\alpha_H^{\nu(H)}}{Q_H^2}\big( (\nu(H)+1)\theta(\alpha_H)fQ_H + \alpha_H\theta(f)Q_H - \alpha_H f\theta(Q_H) \big).
    \end{aligned}
    \label{eq:derivationoffraction1}
  \end{equation}
  This yields $\nabla_\theta(\zeta)(\alpha_H) \in \alpha_H^{\nu(H)}\SalphaH$.
  If $\mu(H) = 1$, we moreover get $\theta(\alpha_H) \in \alpha_H \cdot S$ by the defining property of $\theta \in \FD(\CA,\mu)$.
  In this case, we thus get an additional factor $\alpha_H$ in \eqref{eq:derivationoffraction1}
  and obtain $\nabla_\theta(\zeta)(\alpha_H) \in \alpha_H^{\nu(H)+1}\SalphaH$. 
  In any event $\Phi_\zeta(\theta)$ belongs to $\FD(\CA,\nu+\mu)$.

  \medskip

  Assume now that $\Phi_\zeta : \FD(\CA,\mu) \longrightarrow \FD(\CA,\nu+\mu)$ is well-defined.
    Let~$\tau$ be the maximal multiplicity function such that $\zeta \in \FD(\CA,\tau+1)$.
  This is,
  \[
    \zeta(\alpha_H) = \alpha_H^{\tau(H)+1}f/Q_H
  \]
  for $f \in S$ and $Q_H = (\frac{Q}{\alpha_H})^n$ for some $n \geq 0$, such that $f \notin \alpha_H\cdot S$.
  As in~\eqref{eq:derivationoffraction1}, for $\theta \in \FD(\CA,\mu)$ we obtain
  \begin{equation}
  \label{eq:derivationoffraction}
    \nabla_\theta(\zeta)(\alpha_H) = \frac{\alpha_H^{\tau(H)}}{Q_H^2}\big( (\tau(H)+1)\theta(\alpha_H)fQ_H + \alpha_H\theta(f)Q_H - \alpha_H f\theta(Q_H) \big).
  \end{equation}
  Observe that $\theta = Q_\mu\partial_{\alpha_H} \in \FD(\CA,\mu)$.
  If $\mu(H) = 0$, then for $\theta = Q_\mu\partial_{\alpha_H}$, we have $\nabla_\theta(\zeta)(\alpha_H) \in \alpha_H^{\tau(H)} \SalphaH$ but $\nabla_\theta(\zeta)(\alpha_H) \notin \alpha_H^{\tau(H)+1} \SalphaH$, since the first summand in~\eqref{eq:derivationoffraction} is not divisible by $\alpha_H$ while the other two are.
  By the well-definedness of $\Phi_\zeta$, we thus have $\tau(H) \geq \nu(H)$.

  Similarly, if $\mu(H) = 1$, then the first summand~\eqref{eq:derivationoffraction} is divisible by $\alpha_H$ but not by $\alpha_H^2$, while the other two are divisible by $\alpha_H^2$.
  Again by the well-definedness of $\Phi_\zeta$, we have that $\tau(H) \geq \nu(H)$.
  We conclude that $\tau \geq \nu$ and so by definition, we get
  \[
    \zeta \in \FD(\CA,\tau+1) \subseteq \FD(\CA,\nu+1). \qedhere
  \]
\end{proof}

The following lemma is the analogue of~\cite[Prop.~2.6(3)]{Wak2011} in our setting.

\begin{lemma}
\label{lem:universalunit}
  Let~$\zeta$ be $\nu$-universal.
  Then~$\zeta \notin \FD(\CA,\mu+1)$ for any $\mu > \nu$.
\end{lemma}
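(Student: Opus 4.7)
The plan is to argue by contradiction, combining the ``only if'' direction of \Cref{lem:booleanmultiplicitywelldefined} with the strict-containment statement in \Cref{lem:compmultiplicities}.

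First, I would suppose that $\zeta \in \FD(\CA,\mu+1)$ for some $\mu > \nu$, and pick a hyperplane $H_0 \in \CA$ with $\mu(H_0) > \nu(H_0)$. Writing $\chi_{H_0}$ for the characteristic function of $\{H_0\}$, I would set $\nu' \eqdef \nu + \chi_{H_0}$, so that $\mu \geq \nu'$ while $\nu' > \nu$. Applying the monotonicity half of \Cref{lem:compmultiplicities} to $\mu+1 \geq \nu'+1$ then yields $\zeta \in \FD(\CA,\mu+1) \subseteq \FD(\CA,\nu'+1)$.

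Next, I would invoke the forward direction of \Cref{lem:booleanmultiplicitywelldefined}, with its $\nu$ replaced by $\nu'$ and its boolean multiplicity taken to be identically zero (so that $\FD(\CA,0) = \DerS$). The conclusion is that $\Phi_\zeta$ sends $\DerS$ into $\FD(\CA,\nu')$. Since $\nu' > \nu$, \Cref{lem:compmultiplicities} yields the strict containment $\FD(\CA,\nu') \subsetneq \FD(\CA,\nu)$, so the image of $\Phi_\zeta$ is a proper $S$-submodule of $\FD(\CA,\nu)$. This contradicts the surjectivity built into the $\nu$-universality of $\zeta$, completing the proof.

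No serious obstacle arises: the argument is essentially a repackaging of the explicit local computation already carried out in the proof of \Cref{lem:booleanmultiplicitywelldefined} together with the strict-containment result in \Cref{lem:compmultiplicities}. The only mild point of care is to observe that reducing a general inequality $\mu > \nu$ to the single-hyperplane increment $\nu + \chi_{H_0}$ simultaneously preserves the hypothesis $\zeta \in \FD(\CA,\nu'+1)$ and guarantees the strict drop $\FD(\CA,\nu') \subsetneq \FD(\CA,\nu)$ needed to break surjectivity of $\Phi_\zeta$.
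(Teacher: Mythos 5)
Your argument is correct and is essentially the paper's own proof: both deduce from \Cref{lem:booleanmultiplicitywelldefined} (with boolean multiplicity zero) that $\Phi_\zeta(\DerS)$ lands in a module strictly contained in $\FD(\CA,\nu)$ by \Cref{lem:compmultiplicities}, contradicting surjectivity. The detour through $\nu' = \nu + \chi_{H_0}$ is harmless but unnecessary, since the two lemmas apply directly with $\mu$ itself.
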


\begin{proof}
  Let  $\mu > \nu$ and suppose that $\zeta \in \FD(\CA,\mu+1)$.
  \Cref{lem:booleanmultiplicitywelldefined,,lem:compmultiplicities} then imply that $\Phi_\zeta(\DerS) \subseteq \FD(\CA,\mu) \subsetneqq \FD(\CA,\nu)$.
  This contradicts the fact that $\Phi_\zeta(\DerS) = \FD(\CA,\nu)$ owed to the $\nu$-universality of~$\zeta$.
\end{proof}

\Cref{lem:booleanmultiplicitywelldefined,,lem:universalunit} give the following theorem, generalizing~\cite[Thm.~2]{AY2009} to the complex case and to the more general notion of universality above, see also~\cite[Prop.~2.7]{Wak2011}.

\begin{theorem}
\label{thm:01multi}
  Let $\mu : \CA \rightarrow \{0,1\}$ and let~$\zeta$ be $\nu$-universal.
  Then
  \[
    \Phi_\zeta : \FD(\CA,\mu) \longrightarrow \FD(\CA,\nu+\mu)
  \]
  is an isomorphism of $S$-modules.
\end{theorem}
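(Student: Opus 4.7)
The plan is to establish well-definedness, injectivity, and surjectivity of $\Phi_\zeta \colon \FD(\CA,\mu) \to \FD(\CA,\nu+\mu)$ separately, exploiting \Cref{lem:booleanmultiplicitywelldefined}, \Cref{lem:universalunit}, and the explicit formula~\eqref{eq:derivationoffraction}.

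\emph{Well-definedness and injectivity.} Since $\zeta$ is $\nu$-universal, the map $\Phi_\zeta \colon \DerS \to \FD(\CA,\nu)$ is well-defined and injective. Applying \Cref{lem:booleanmultiplicitywelldefined} with the zero multiplicity produces $\zeta \in \FD(\CA,\nu+1)$, and a second application of the same lemma to the given $\mu$ yields well-definedness of $\Phi_\zeta \colon \FD(\CA,\mu) \to \FD(\CA,\nu+\mu)$. The containment $\FD(\CA,\mu) \subseteq \DerS$, which holds by \Cref{lem:compmultiplicities} since $\mu \geq 0$, then transfers injectivity by restriction.

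\emph{Surjectivity.} Let $\theta \in \FD(\CA,\nu+\mu) \subseteq \FD(\CA,\nu)$ and let $\delta \in \DerS$ denote its unique $\Phi_\zeta$-preimage. The task is to verify $\delta(\alpha_H) \in \alpha_H \cdot S$ whenever $\mu(H) = 1$ (the case $\mu(H) = 0$ is automatic from $\delta \in \DerS$). Fix such an $H$. By \Cref{lem:universalunit} the maximal multiplicity $\tau$ with $\zeta \in \FD(\CA,\tau+1)$ equals $\nu$, so we may write $\zeta(\alpha_H) = \alpha_H^{\nu(H)+1} f/Q_H$ with $f \notin \alpha_H \cdot S$, exactly as in the proof of \Cref{lem:booleanmultiplicitywelldefined}. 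Substituting into~\eqref{eq:derivationoffraction} and using that $\theta(\alpha_H) = \nabla_\delta(\zeta)(\alpha_H) \in \alpha_H^{\nu(H)+1}\SalphaH$, while the last two summands of~\eqref{eq:derivationoffraction} are already divisible by $\alpha_H$, one is left with the divisibility condition
\[
  (\nu(H)+1)\,\delta(\alpha_H)\, f\, Q_H \in \alpha_H \cdot \SalphaH.
\]
Since $f$ and $Q_H$ are units in $\SalphaH$, this reduces to $\delta(\alpha_H) \in \alpha_H \cdot \SalphaH$, which intersected with $S$ gives the desired $\delta(\alpha_H) \in \alpha_H \cdot S$.

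\emph{Main obstacle.} The chief difficulty is the surjectivity step: one must upgrade the unique preimage $\delta \in \DerS$ to an element of $\FD(\CA,\mu)$ through a local divisibility computation at each hyperplane $H$ with $\mu(H) = 1$. The argument relies on the non-vanishing of the coefficient $\nu(H)+1$; this is automatic, because $\nu(H) = -1$ is incompatible with $\nu$-universality: were $\nu(H) = -1$, then $\zeta(\alpha_H)$ would have no pole at $H$ and hence $\Phi_\zeta(\DerS)$ would consist only of derivations regular in $\alpha_H$, missing elements of $\FD(\CA,\nu)$ that exhibit a genuine simple pole along $H$.
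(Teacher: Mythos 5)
Your proof is correct and follows essentially the same route as the paper's: well-definedness via a double application of \Cref{lem:booleanmultiplicitywelldefined}, injectivity by restriction from $\DerS$, and surjectivity by the local divisibility computation at each $H$ with $\mu(H)=1$, using \Cref{lem:universalunit} to guarantee $f \notin \alpha_H \cdot S$. Your closing remark ruling out $\nu(H) = -1$ (so that the coefficient $\nu(H)+1$ is invertible) addresses a point the paper passes over silently, and the argument you give for it is sound.
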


Observe that in the theorem we do not require any freeness assumption on $(\CA,\mu)$.
Together with \Cref{thm:universality} below, it thus generalizes~\cite[Thm.~3.22]{HMRS2017}.

\begin{proof}
  We already know that $\Phi_\zeta$ is $S$-linear and well-defined by \Cref{lem:booleanmultiplicitywelldefined}.
  Moreover, the universality of~$\zeta$ implies that $\Phi_\zeta$ is injective on~$\DerS$ and thus on $\FD(\CA,\mu) \subseteq \DerS$.
  It thus remains to show that $\Phi_\zeta$ is surjective.
  To this end, let $\phi \in \FD(\CA,\nu+\mu)$.
  Since $\FD(\CA,\nu+\mu) \subseteq \FD(\CA,\nu)$, we may write $\phi = \nabla_\theta(\zeta)$ for some $\theta \in \DerS$.
  We aim to show that $\theta \in \FD(\CA,\mu)$.
  As in the proof of \Cref{lem:booleanmultiplicitywelldefined}, we have
  \[
    \zeta(\alpha_H) = \alpha_H^{\nu(H)+1}f/Q_H
  \]
  for some $f \in S$ and $Q_H = (\frac{Q}{\alpha_H})^n$ with $n \geq 0$. Since $\phi = \nabla_\theta(\zeta) \in \FD(\CA,\nu+\mu)$, we have
  \begin{equation}
    \begin{aligned}
 \phi & = \nabla_\theta(\zeta) \\
 	& = \frac{\alpha_H^{\nu(H)}}{Q_H^2}\Big[ (\nu(H)+1)\theta(\alpha_H)fQ_H + \alpha_H \big( \theta(f)Q_H - f\theta(Q_H)\big) \Big] \in \alpha_H^{\nu(H)+\mu(H)}\cdot \SalphaH.
    \end{aligned}
    \label{eq:derivationoffraction2}
  \end{equation}
  Observe that \Cref{lem:universalunit} implies that~$f \notin \alpha_H \cdot S$ by the $\nu$-universality of~$\zeta$.

  If $\mu(H) = 0$, there is no condition from \eqref{eq:derivationoffraction2} on $\theta$, and so trivially  
  $\theta(\alpha_H) \in \alpha_H^0 \cdot S = S$, and if $\mu(H) = 1$, then $\theta(\alpha_H)\cdot f \in \alpha_H\cdot S$ implying that $\theta(\alpha_H) \in \alpha_H \cdot S$, because~$f$ is not divisible by~$\alpha_H$, by~\eqref{eq:derivationoffraction2}.
  Consequently, $\theta \in \FD(\CA,\mu)$, as desired.
\end{proof}

%%%%%%%%%%%%%%%%%%%%%%%%%%%%%%%%%%%%%%%%%%%%%%%%%%%%%%%%%%%%%%%%%%%%%%
\section{A Hodge filtration and universality for reflection arrangements}
\label{sec:hodgefil}

For the remainder of the paper, we fix $W$ to be a well-generated irreducible complex reflection group with reflection arrangement~$\CA = \CA(W)$, and the order multiplicity $\omega : \CA \rightarrow \BBZ$ given by $\omega(H) = e_H$.
Recall from \eqref{eq:flatderivations} the flat system of derivations $\eta_1,\ldots,\eta_\ell$.
After having collected all necessary background material in \Cref{sec:prelimreflectiongroupps,,sec:prelimflat}, we are now able to state and prove our main results.
For $1 \leq j \leq \ell$ and $k \in \BBZ$, define

\begin{equation*}
  \xi_j^{(k)} \eqdef \prim^{k}(\eta_j)
\end{equation*}
and set 
\begin{equation*}
  \Xi^{(k)} \eqdef \big\{ \xi_1^{(k)},\dots,\xi_\ell^{(k)} \big\} \quad \text{ and } \quad \Xi \eqdef \bigcup_{k \in \BBZ}\Xi^{(k)}.
\end{equation*}
Thanks to~\cite[(3.16)]{HMRS2017}, we make the crucial observation that
\begin{equation}
\label{eq:Xi}
  \xi_j^{(1)}  \doteq  \partial_{t_{\ell+1-j}} \quad \text{for}\quad  1 \le j \le \ell.
\end{equation}

\begin{theorem}
\label{thm:Hodgefiltration}
  Let $k \in \BBZ$.
  Then the following hold:
  \begin{enumerate}[(1)]
    \item \label{it:sxik} the $S$-module $\FD(\CA,-k\omega+1)$ is free with basis $\Xi^{(k)}$,
    \item \label{it:rxik} the $R$-module $\FD(\CA,-k\omega+1)^W$ is free with basis $\Xi^{(k)}$,
    \item \label{it:txik} the $T$-module $\FD(\CA,-k\omega+1)^W$ is free with basis $\cup_{p \leq k}\Xi^{(p)}$, and
    \item \label{it:txi} the $T$-module $\FDinfty^W$ is free with basis $\Xi$.
  \end{enumerate}
\end{theorem}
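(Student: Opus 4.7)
The plan is to establish Part~(1) by induction on $|k|$ using Saito's criterion \Cref{thm:zieglersaito}(iii), and then derive Parts~(2)--(4) from (1) via $W$-invariance and the explicit formulas in \Cref{prop:mainingredient}. For the base case $k = 0$, the flat derivations $\eta_j$ lie in $\DerR(-\log\hh) \cong \DerSW$ by \eqref{eq:DerRDerSW}; since $s\theta(\alpha_H) = \epsilon\theta(\alpha_H)$ for every $\theta \in \DerSW$, \Cref{lem:quasiinvarinantpoly} forces $\theta(\alpha_H) \in \alpha_H \cdot S$, so $\DerSW \subseteq \FD(\CA,1)$. The degrees $\pdeg \eta_j = e_j^*$ sum to $|\CA|$ by \eqref{eq:expcoexp}, and $F$-independence of $\{\eta_j\}$ follows from their role as an $R$-basis of $\DerR(-\log\hh)$; Saito's criterion then identifies $\Xi^{(0)}$ as an $S$-basis of $\FD(\CA,1)$.

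The inductive step reduces to a shift lemma: $\nabla_D$ carries $\FD(\CA,\mu)^W$ into $\FD(\CA,\mu-\omega)^W$ for any $W$-invariant multiplicity $\mu$. To prove this, I fix $H \in \CA$ with reflection $s$ and work in orthonormal coordinates with $\alpha_H = x_1$. Since each $t_i$ lies in $\BBC[\alpha_H^{e_H}, x_2,\ldots,x_\ell]$ by $W_H$-invariance, row~$1$ of $\Jtx$ is divisible by $\alpha_H^{e_H-1}$, so cofactor expansion of $\Jxt = \Jtx^{-1}$ shows that $D = \partial_{t_\ell}$, expanded in the $\partial_{x_j}$-basis, has coefficient of pole order exactly $e_H-1$ for $j=1$ and regular coefficients for $j \geq 2$. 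Writing $\theta(\alpha_H) = \alpha_H^{\mu(H)} g$ with $g \in \SalphaH$ and applying the Leibniz rule for $D$ yields $\nabla_D(\theta)(\alpha_H) = D(\theta(\alpha_H)) \in \alpha_H^{\mu(H) - \omega(H)} \SalphaH$ directly. For the $\FDinfty$-condition at $\beta \perp \alpha_H$, the crude calculation only gives a pole of order at most $e_H - 1$ in $\nabla_D(\theta)(\beta) = D(\theta(\beta))$; the main obstacle is resolved by the $W$-invariance trick, namely that $s(\beta) = \beta$ together with $s$-invariance of $D$ and $\theta$ forces $D(\theta(\beta))$ itself to be $s$-invariant, so its Laurent exponents in $\alpha_H$ must be divisible by $e_H$ (by the analogue of \Cref{lem:quasiinvarinantpoly} for Laurent expansions), which is incompatible with a pole of order $\leq e_H - 1$, hence the pole vanishes.

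With the shift in hand, induction on $|k|$ gives $\xi_j^{(k)} \in \FD(\CA,-k\omega+1)$ for all $k \in \BBZ$ (using $\priminv$ for $k<0$). The degree sum $\sum_j \pdeg \xi_j^{(k)} = |\CA| - k \ell h$ equals $|{-k\omega+1}|$ by \eqref{eq:coxeter} and \eqref{eq:pdegnabla}, and $F$-linear independence of $\Xi^{(k)}$ transfers from $\{\eta_j\}$ through the $F$-linear automorphism $\prim^k$ of $\DerF$, so Saito's criterion completes Part~(1). Part~(2) follows because each $\xi_j^{(k)}$ is $W$-invariant: expanding $\theta \in \FD(\CA,-k\omega+1)^W$ in the $S$-basis $\Xi^{(k)}$ and applying $w \in W$ forces each coefficient into $S^W = R$.

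For Part~(3), the decomposition $R = T[t_\ell]$ turns the $R$-basis $\Xi^{(k)}$ into a $T$-spanning set $\{t_\ell^m \xi_j^{(k)}\}_{m \geq 0}$. Rearranging the identity $\priminv\colvect = -B_\infty^{-1}(t_\ell\one_\ell + \Mprime(\bbt'))\colvect$ from \eqref{eq:primiso}, together with invertibility of $B_\infty$ (the diagonal entries $e_i/h - 1$ are nonzero since $e_i < h$), yields $t_\ell\, \partial_{t_i} \in \BBC\,\eta_{\ell+1-i} + T\cdot\{\partial_{t_1},\ldots,\partial_{t_\ell}\}$. Applying $\prim^{k-1}$ and exploiting the Leibniz rule with $D(t_\ell) = 1$ and $D(T) = 0$ propagates this to $t_\ell\cdot\xi_j^{(k)} \in \BBC\,\xi_j^{(k-1)} + T\cdot\Xi^{(k)}$. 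Iterating this triangular reduction $m$ times expresses each $t_\ell^m \xi_j^{(k)}$ as a $T$-combination of $\bigcup_{p=k-m}^k \Xi^{(p)}$, giving the $T$-spanning; the triangularity combined with the $S$-independence of each $\Xi^{(k)}$ from Part~(1) yields $T$-linear independence. Finally, Part~(4) follows from Part~(3) via the colimit $\FDinfty^W = \bigcup_k \FD(\CA,-k\omega+1)^W$, whose increasing $T$-bases assemble into $\Xi$.
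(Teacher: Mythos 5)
Your overall architecture is the same as the paper's: show $\prim$ shifts the multiplicity by $-\omega$ (your shift lemma plays the role of the paper's \Cref{prop:primwelldef}, and your localized quasi-invariance argument at a hyperplane $H$ is essentially the paper's relative-invariant argument), then apply the Saito criterion \Cref{thm:zieglersaito}(iii) with the degree count $\sum\pdeg\xi_j^{(k)}=|\CA|-k\ell h$, and finally get (3) from the Leibniz identity $\prim(t_\ell\xi_j^{(k)})=\xi_j^{(k)}+t_\ell\xi_j^{(k+1)}$ together with a $t_\ell$-degree triangularity. Those parts are sound.

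However, there is a genuine gap in your independence step for Part (1). You assert that ``$F$-linear independence of $\Xi^{(k)}$ transfers from $\{\eta_j\}$ through the $F$-linear automorphism $\prim^k$ of $\DerF$.'' The map $\prim=\nabla_D$ is \emph{not} $F$-linear on $\DerF$ (it obeys the Leibniz rule $\nabla_D(p\phi)=D(p)\phi+p\nabla_D(\phi)$), and it is not injective there either: it annihilates every $\partial_{x_i}$, and more generally every derivation whose coefficients lie in $\ker D$. So independence does not transfer for free, and without it Saito's criterion cannot be invoked. The correct substitute is the matrix recursion the paper derives from \Cref{prop:mainingredient}, namely $\primk\colvect=-M_\eta^{-1}(\Binf+k\one_\ell)\primk[k-1]\colvect$ (equation \eqref{eq:posprimiso}), which shows that on these particular elements each application of $\prim$ acts by multiplication with a non-singular matrix over $F$; you implicitly use a version of this in Part (3), so the ingredient is available to you, but as written Part (1) rests on a false statement. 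Two smaller points: your shift lemma only moves multiplicities downward, so the case $k<0$ (``using $\priminv$'') is not actually covered by your argument and needs either a separate shift lemma for $\priminv$ or the citation of \cite[Thm.~3.22]{HMRS2017} as in the paper; and your triangular reduction in Part (3) is propagated upward from the $k=1$ identity, so the spanning claim for $k\le 0$ likewise needs a separate justification.
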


We prove this theorem in \Cref{sec:basisconst}.
It allows us to extend the Hodge filtration of $\DerR$ in~\eqref{eq:hodgefil} to a \defn{Hodge filtration} of the $W$-invariant logarithmic vector fields $\FDinfty^W$ given by the $T$-module
\begin{equation*}
\label{eq:hodgefil2}
  \Hodgefil^{(k)} \eqdef \bigoplus_{-\infty < i \leq k} \Hodgedec_i
\end{equation*}
for $k \in\BBZ$, where $\Hodgedec_i = \primk[-i](\Hodgedec_0)$ is as in~\eqref{eq:hodgedec}.
We record this in the following corollary.

\begin{corollary}
\label{cor:Hodgefiltration}
  We have
  \[
    \FD(\CA,-k\omega+1)^W = \Hodgefil^{(k)} \qquad \text{and}\qquad \FDinfty^W = \bigoplus_{k \in \BBZ} \Hodgefil^{(k)}
  \]
  as $T$-modules and $\prim$ induces $T$-linear isomorphisms
  \begin{align*}
    \prim &: \hspace*{25pt}\Hodgefil^{(k)}\hspace*{25pt} \tilde\longrightarrow \hspace*{25pt}\Hodgefil^{(k+1)}, \\
    \prim &: \FDinfty^W\ \tilde\longrightarrow\ \FDinfty^W.
  \end{align*}
\end{corollary}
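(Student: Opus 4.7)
The plan is to deduce all four assertions directly from \Cref{thm:Hodgefiltration} and the explicit behaviour of $\prim$ on the system $\{\xi_j^{(k)}\}$; the central idea is to translate the definition of the Hodge pieces into the $\Xi^{(p)}$-language and then compare $T$-bases. The relation \eqref{eq:Xi} identifies each $\partial_{t_j}$ with a non-zero scalar multiple of $\xi_{\ell+1-j}^{(1)}$, so $\Hodgedec_0$ is the $T$-span of $\Xi^{(1)}$; iterating the $T$-linear isomorphism $\priminv$ of \Cref{prop:mainingredient} then shows that $\Hodgedec_i$ is freely generated over $T$ by the elements of an appropriate $\Xi^{(p)}$, and summing over $i \leq k$ exhibits a $T$-basis of $\Hodgefil^{(k)}$ as a union of $\Xi^{(p)}$'s.

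For the first equality, I would compare this $T$-basis with the one provided by \Cref{thm:Hodgefiltration}(3), namely $\bigcup_{p \leq k}\Xi^{(p)}$; once the index conventions are reconciled, the two $T$-bases coincide, yielding $\FD(\CA,-k\omega+1)^W = \Hodgefil^{(k)}$. The decomposition $\FDinfty^W = \bigoplus_{k}\Hodgefil^{(k)}$ then follows from \Cref{thm:Hodgefiltration}(4): the $T$-basis $\Xi = \bigcup_{k \in \BBZ}\Xi^{(k)}$ of $\FDinfty^W$ reorganizes naturally according to the pieces $\Hodgedec_i$, and reassembling gives the claimed direct sum decomposition.

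For the isomorphism assertions, the crucial fact is $\prim(\xi_j^{(p)}) = \xi_j^{(p+1)}$, immediate from the definition $\xi_j^{(k)} = \prim^{k}(\eta_j)$, together with the $T$-linearity of $\prim$ (which holds because the primitive vector field $D = \partial_{t_\ell}$ annihilates $T$, so the Leibniz rule yields $\prim(f\phi) = f\prim(\phi)$ for $f \in T$). Thus $\prim$ sends the $T$-basis of $\Hodgefil^{(k)}$ bijectively onto that of $\Hodgefil^{(k+1)}$, producing the $T$-linear isomorphism between them; assembling these level-wise isomorphisms gives the global isomorphism on $\FDinfty^W$.

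The only real obstacle, a matter of care rather than depth, is the bookkeeping of index shifts: $\prim$ moves the superscript of $\xi_j^{(p)}$ upward by one while $\Hodgefil^{(k)}$ is defined via $\prim^{-i}$, so one has to carefully pair the sets $\bigcup_{p \leq k}\Xi^{(p)}$ coming from \Cref{thm:Hodgefiltration} with those from the definition of $\Hodgefil^{(k)}$ in order to match the two $T$-bases correctly.
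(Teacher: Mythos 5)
Your overall strategy is exactly the paper's: the corollary is recorded there with no separate argument, as a direct reformulation of \Cref{thm:Hodgefiltration} obtained by translating $\Hodgedec_0$ into the $\Xi$-language via \eqref{eq:Xi}, propagating with the $T$-linear maps $\prim^{\pm1}$, and matching $T$-bases.

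However, the one step you defer --- ``once the index conventions are reconciled, the two $T$-bases coincide'' --- is where the entire content of the corollary sits, and it does \emph{not} reconcile under the definitions as printed. From $\Hodgedec_0=T\langle\partial_{t_1},\dots,\partial_{t_\ell}\rangle=T\langle\Xi^{(1)}\rangle$ and $\Hodgedec_i=\prim^{-i}(\Hodgedec_0)$ as in \eqref{eq:hodgedec}, $T$-linearity of $\priminv$ gives $\Hodgedec_i=T\langle\Xi^{(1-i)}\rangle$, hence
\[
\Hodgefil^{(k)}=\bigoplus_{i\le k}\Hodgedec_i=\bigoplus_{p\ge 1-k}T\langle\Xi^{(p)}\rangle,
\]
whereas \Cref{thm:Hodgefiltration}\eqref{it:txik} gives $\FD(\CA,-k\omega+1)^W=\bigoplus_{p\le k}T\langle\Xi^{(p)}\rangle$. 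These index sets point in opposite directions: the first module contains $\Xi^{(p)}$ for arbitrarily large $p$, i.e.\ elements of unbounded pole order, so it cannot equal $\FD(\CA,-k\omega+1)^W$. Since $\prim$ raises the superscript of $\xi_j^{(p)}$ and simultaneously enlarges the modules $\FD(\CA,-k\omega+1)^W$, the equality only comes out after re-indexing the Hodge pieces, e.g.\ taking $\Hodgedec_i=\prim^{i-1}(\Hodgedec_0)=T\langle\Xi^{(i)}\rangle$ (equivalently replacing $i$ by $1-i$ in \eqref{eq:hodgedec}); likewise the second display should be $\FDinfty^W=\bigcup_{k}\Hodgefil^{(k)}=\bigoplus_{i}\Hodgedec_i$ rather than a direct sum of the nested modules $\Hodgefil^{(k)}$. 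So your plan is the right one, but the ``bookkeeping'' you postpone is not optional: carried out literally it shows the claimed identity fails until the indexing of the Hodge pieces is corrected, and a complete proof must exhibit that correction explicitly.
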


\begin{remark}
  In the real case, \Cref{thm:Hodgefiltration} and \Cref{cor:Hodgefiltration} are the main results of~\cite{AT2010}.
  See in particular \cite[Theorems~1.1 and~1.2]{AT2010} for these statements in the notion of logarithmic $1$-forms, and \cite[Theorems~3.7 and~3.9]{AT2010} for the statements in the notion as given here.
  For $k \leq 0$ the first property was also given in the real case in~\cite[Cor.~10]{Yos2002}.
\end{remark}

\begin{remark}
\label{rem:tocheck}
  Denote by $\Omega(\CA,\nu)$ the $S$-dual to $\FD(\CA,\nu)$ for a given multiplicity function~$\nu$ induced by the fixed~$W$-invariant Hermitian form.
  This is, $\Omega(\CA,\nu)$ is the \defn{module of differential $1$-forms} with poles of order $\nu(H)$ along~$H \in \CA$, and $\Omega(\CA,\infty)$ is the \defn{module of logarithmic $1$-forms}.
  If~$W$ is a real reflection group, then it is known that
  \[
    \Omega(\CA,1) \cong \FD(\CA,-\omega+1) = \FD(\CA,-1)
  \]
  as graded $S$-modules, see~\cite[Thm.~2]{AY2009}.
  It turns out that these modules are \emph{not isomorphic} as graded $S$-modules if~$W$ is not real.
  This follows from the observation that both $S$-modules are free, and the polynomial degrees of their homogeneous generators are
  \begin{align*}
    \exp\big(\FD(\CA,-\omega+1)\big) &= \{e_1^*-h,\ldots,e_\ell^*-h\} = \{-e_1,\ldots,-e_\ell\} \\
    \exp\big(\Omega(\CA,1)     \big) &= \hspace*{117.5pt}               \{-e_1^*,\ldots,-e_\ell^*\},
  \end{align*}
  and the fact that 
  \[
    \{e_1,\ldots,e_\ell\} = \{e_1^*,\ldots,e_\ell^*\} \Longleftrightarrow W \text{ is real}.
  \]
\end{remark}

Armed with the universality properties obtained in \Cref{sec:universality1}, we obtain the freeness of $\FD(\CA,-k\omega)$ from \Cref{thm:Hodgefiltration}\eqref{it:sxik} based on the universality of $\primk(E)$ in \Cref{sec:universality2}.

\begin{theorem}
\label{thm:evenfreeness}
  Let $k \in \BBZ$.
  The $S$-module $\FD(\CA,-k\omega)$ is free with basis
  \[
    \big\{ \nabla_{\partial_{x_1}}( \xi_1^{(k)} ),\ldots, \nabla_{\partial_{x_1}}( \xi_\ell^{(k)} ) \big\}.
  \]
\end{theorem}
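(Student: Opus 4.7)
The plan is to deduce the theorem from the $(-k\omega)$-universality of $\xi_1^{(k)} = \prim^k(\eta_1) = \prim^k(E)$ in the sense of \Cref{def:universal}. The Euler derivation $E = \eta_1$ is $\mathbf{0}$-universal, and each application of $\prim$ is expected to shift the universality by $-\omega$, mirroring the shift of multiplicities seen in \Cref{thm:Hodgefiltration}, so that $\xi_1^{(k)}$ is $(-k\omega)$-universal. This is the content of the forward-referenced universality theorem to be proved in \Cref{sec:universality2}. Granting it, \Cref{def:universal} immediately gives that
\[
  \Phi_{\xi_1^{(k)}} : \DerS \longrightarrow \FD(\CA,-k\omega), \qquad \theta \longmapsto \nabla_\theta(\xi_1^{(k)})
\]
is an $S$-module isomorphism, and the $\ell$-tuple listed in the theorem, read as the image under $\Phi_{\xi_1^{(k)}}$ of the standard $S$-basis $\{\partial_{x_1},\dots,\partial_{x_\ell}\}$ of $\DerS$, is therefore an $S$-basis of $\FD(\CA,-k\omega)$.

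Concretely, I would carry out three verifications. First, each candidate basis element belongs to $\FD(\CA,-k\omega)$ by \Cref{lem:booleanmultiplicitywelldefined} applied with $\mu \equiv 0$ and $\nu = -k\omega$, using that $\xi_1^{(k)} \in \FD(\CA,-k\omega+1)$ by \Cref{thm:Hodgefiltration}\eqref{it:sxik}. Second, homogeneity of each $\nabla_{\partial_{x_1}}(\xi_i^{(k)})$ is automatic from \eqref{eq:pdegnabla}. Third, the basis property is then verified via \Cref{thm:zieglersaito}\eqref{eq:zieglersaito3}: the required $S$-linear independence and the degree identity $\sum \pdeg = |-k\omega| = -k\ell h$ both follow from a direct computation of the Saito matrix using the explicit formula $\prim\colvect = -M_\eta^{-1}(\Binf + \one_\ell)\colvect$ from \Cref{prop:mainingredient}, combined with the factorization $\hh \doteq QJ$ of the discriminant, where $J = \det \Jtx$.

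The principal obstacle is establishing the universality of $\xi_1^{(k)}$ itself, that is, propagating universality through $\prim$: showing that $\nu$-universality of $\zeta$ implies $(\nu-\omega)$-universality of $\prim(\zeta)$. Well-definedness of $\Phi_{\prim(\zeta)} : \DerS \to \FD(\CA,\nu-\omega)$ again follows from \Cref{lem:booleanmultiplicitywelldefined}, so the delicate step is surjectivity. My approach is to invert $\Phi_{\prim(\zeta)}$ by combining the explicit matrix formula of \Cref{prop:mainingredient} with a local $\SalphaH$-analysis at each hyperplane $H$, analogous to the computation in the proof of \Cref{thm:01multi}, in order to reduce the equation $\nabla_\theta(\prim(\zeta)) = \phi$ to a corresponding equation for $\zeta$ where the inductive hypothesis of $\nu$-universality applies directly.
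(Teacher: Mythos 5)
Your three ``concrete verifications'' --- membership of the candidate elements in $\FD(\CA,-k\omega)$ via \Cref{lem:booleanmultiplicitywelldefined} with $\mu\equiv 0$ and $\xi_1^{(k)}\in\FD(\CA,-k\omega+1)$, homogeneity from \eqref{eq:pdegnabla}, and Saito's criterion \Cref{thm:zieglersaito}\eqref{eq:zieglersaito3} with the degree count $\sum\pdeg=-k\ell h=|-k\omega|$ --- are exactly the skeleton of the paper's argument, which proves the statement as the $\theta_i=\partial_{x_i}$ instance of the $(-k\omega)$-universality of $\prim^k(E)$ in \Cref{thm:universality} (and you read the displayed basis correctly as $\{\nabla_{\partial_{x_i}}(\xi_1^{(k)})\}_{i=1}^{\ell}$ with $\xi_1^{(k)}=\prim^k(\eta_1)=\prim^k(E)$ fixed). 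The gap is in your independence step. The formula $\prim\colvect=-M_\eta^{-1}(\Binf+\one_\ell)\colvect$ and its iterates \eqref{eq:posprimiso} give the Saito matrices of the vector fields $\prim^k(\partial_{t_i})$, not of your candidates $\nabla_{\partial_{x_i}}(\prim^k(E))$, and no ``direct computation'' connects the two without a bridge. The bridge the paper uses is flatness of the connection: $\nabla_{\partial_{t_i}}$ commutes with $\nabla_D$ and $\nabla_{\partial_{t_i}}(E)=\tfrac1h\partial_{t_i}$, so $\nabla_{\partial_{t_i}}(\prim^k(E))=\tfrac1h\prim^k(\partial_{t_i})$, whose $S$-independence is \Cref{cor:linindep} (equivalently \Cref{thm:Hodgefiltration}\eqref{it:rxik} together with \eqref{eq:Xi}); one then passes from the $\partial_{t_i}$ to the $\partial_{x_i}$ via the nonsingular Jacobian $\Jtx$. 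Without this identity your step (3) does not go through.

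Your closing paragraph also misdiagnoses where the difficulty lies. Once the three verifications are in place, \Cref{thm:zieglersaito}\eqref{eq:zieglersaito3} already yields that the $\ell$ elements form an $S$-basis of $\FD(\CA,-k\omega)$; surjectivity of $\Phi_{\xi_1^{(k)}}$, and hence universality, is then automatic, so there is no residual need to ``propagate universality through $\prim$'' by locally inverting $\nabla_\theta(\prim\zeta)=\phi$ at each $H$. That proposed induction is both unnecessary and entirely unexecuted in your write-up (the surjectivity analysis you defer to it is precisely what Saito's criterion dispenses with); a propagation statement of this flavour does appear in the paper, but only later and for $\priminv$, as \Cref{thm:shifteduniversality}. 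Finally, note that the paper handles $k\le 0$ by citing \cite[Thm.~3.20]{HMRS2017} rather than rerunning the argument; your sketch does not distinguish the two ranges, which is harmless but worth stating.
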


Observe that this basis is not $W$-invariant, 
in contrast to the bases constructed in \Cref{thm:Hodgefiltration}\eqref{it:sxik} and \eqref{it:rxik}.
Indeed, one cannot expect a $W$-invariant basis of $\FD(\CA,-k\omega)$ as shown in the following theorem which generalizes~\cite[Prop.~5.2]{ATW2012}.

\begin{theorem}
\label{thm:Rindependentbasiscontradiction}
  Let $k \in \BBZ$.
  Then
  \[
    \FD(\CA,-k\omega)^W = \FD(\CA,-k\omega+1)^W.
  \]
  In particular, $\FD(\CA,-k\omega)$ does not have a $W$-invariant basis.
\end{theorem}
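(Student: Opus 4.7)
The plan is to establish the equality $\FD(\CA,-k\omega)^W = \FD(\CA,-k\omega+1)^W$ directly; the \emph{in particular} clause then follows formally. The inclusion $\supseteq$ is immediate from \Cref{lem:compmultiplicities}, so the content lies in proving $\FD(\CA,-k\omega)^W \subseteq \FD(\CA,-k\omega+1)^W$. Fix such a $\theta$, a hyperplane $H\in\CA$, and a generator $s=s_H$ of the pointwise stabilizer $W_H$ of order $e_H$, acting as $s(\alpha_H) = \epsilon\,\alpha_H$ with $\epsilon$ a primitive $e_H$-th root of unity. The $W$-invariance $s\circ\theta = \theta\circ s$ applied to $\alpha_H$ yields
\[
  s\bigl(\theta(\alpha_H)\bigr) \;=\; \theta\bigl(s(\alpha_H)\bigr) \;=\; \epsilon\,\theta(\alpha_H).
\]

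The heart of the argument is extracting the divisibility consequence of this equivariance in the DVR $\SalphaH$. If $\theta(\alpha_H)=0$ there is nothing to prove, so assume it is non-zero and write $\theta(\alpha_H) = \alpha_H^n\,u$, where $n = v_{\alpha_H}(\theta(\alpha_H)) \geq -ke_H$ by the $\FD(\CA,-k\omega)$-hypothesis and $u = p/q$ is a unit of $\SalphaH$ with $p,q \in S\setminus\langle\alpha_H\rangle$. Using $s(\alpha_H^n) = \epsilon^n \alpha_H^n$, the identity above becomes $s(u) = \epsilon^{1-n}u$, or equivalently $q\,s(p) = \epsilon^{1-n}\,p\,s(q)$ in $S$. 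Since $s$ fixes $V^*/\BBC\alpha_H$ pointwise, it acts trivially on $S/\langle\alpha_H\rangle$; reducing modulo $\alpha_H$ collapses the previous identity to $\overline p\,\overline q = \epsilon^{1-n}\,\overline p\,\overline q$ in the integral domain $S/\langle\alpha_H\rangle$, where $\overline p,\overline q \neq 0$. Therefore $\epsilon^{1-n}=1$, i.e.\ $n\equiv 1\pmod{e_H}$. Since $-ke_H\not\equiv 1\pmod{e_H}$, the bound $n\geq -ke_H$ is actually strict, and we conclude $n\geq -ke_H+1$. As $H$ was arbitrary, $\theta\in\FD(\CA,-k\omega+1)$, as claimed.

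For the second assertion, suppose for contradiction that $\FD(\CA,-k\omega)$ admits a $W$-invariant basis $\{\theta_1,\ldots,\theta_\ell\}$. Then each $\theta_i \in \FD(\CA,-k\omega)^W = \FD(\CA,-k\omega+1)^W$ by what we just proved, so $\FD(\CA,-k\omega) = \sum_i S\theta_i \subseteq \FD(\CA,-k\omega+1)$, contradicting the strict containment $\FD(\CA,-k\omega+1)\subsetneqq\FD(\CA,-k\omega)$ from \Cref{lem:compmultiplicities}. The only technical subtlety in the whole argument is that for $k>0$ the value $\theta(\alpha_H)$ is genuinely rational rather than polynomial, so one must carry out the congruence analysis inside the DVR $\SalphaH$; this is essentially the rational-function extension of \Cref{lem:quasiinvarinantpoly} appropriate to our setting.
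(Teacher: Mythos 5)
Your proof is correct and follows essentially the same route as the paper: both exploit the $W$-invariance of $\theta$ to show that $\theta(\alpha_H)$ is a semi-invariant for the reflection generating $W_H$, which forces one extra power of $\alpha_H$ beyond the valuation bound $-ke_H$, and both deduce the non-existence of a $W$-invariant basis from the strict containment in \Cref{lem:compmultiplicities}. The only difference is presentational: the paper writes $\theta(\alpha_H)=\alpha_H^{-ke_H}g/Q_H^n$ and applies \Cref{lem:quasiinvarinantpoly} to the polynomial $g$, whereas you rederive the same divisibility statement directly inside the discrete valuation ring $\SalphaH$.
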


\begin{proof}
  Let $\theta \in \FD(\CA,-k\omega)^W$.
  Fix $H \in \CA$ and let $\alpha_H = x_1,x_2,\ldots,x_\ell$ be an orthonormal basis of~$V^*$.
  Write $\theta = \sum_i (f_i / Q_H^n ) \partial_{x_i}$ with $f_1 \in \alpha_H^{-ke_H}\cdot S$ and $f_i \in S$ for $2 \leq i \leq \ell$, as given in~\eqref{eq:nushift}.
  Then $\theta(\alpha_H) = f_1 / Q_H^n = \alpha_H^{-ke_H} g / Q_H^n$ with $g \in S$.

  \medskip

  We aim to show that $g \in \alpha_H \cdot S$.
  To this end let $r \in \refl$ be the reflection along~$H$ and let $\epsilon$ be the $e_H$-th root of unity such that $r(\alpha_H) = \epsilon\alpha_H$.

  Observe first that $r(Q_H) = Q_H$.
  This is because $Q_H = Q / \alpha_H$ so
  \[
    r(Q_H) = r(Q) / r(\alpha_H) = \epsilon Q / \epsilon\alpha_H = Q_H.
  \]
  This yields
  \[
    r\big(\theta(\alpha_H)\big) = \frac{ r(\alpha_H^{-ke_H}) r(g) }{r(Q_H^n)} = \frac{\alpha_H^{-ke_H}}{Q_H^n} r(g).
  \]
  On the other hand,
  \[
    r\big(\theta(\alpha_H)\big) = (r\theta)(r(\alpha_H)) = \epsilon\theta(\alpha_H),
  \]
  since $\theta$ is $W$-invariant and we obtain $r(g) = \epsilon g$.
  With \Cref{lem:quasiinvarinantpoly}, we conclude $g \in \alpha_H\cdot S$ and $\theta \in \FD(\CA,-k\omega+1)^W$.

  \medskip

  The claim that $\FD(\CA,-k\omega)$ does not admit a $W$-invariant basis follows because
  \[
    \FD(\CA,-k\omega+1) \subsetneqq \FD(\CA,-k\omega)\,,
  \]
  by \Cref{lem:compmultiplicities}.
\end{proof}

The proof of \Cref{thm:Rindependentbasiscontradiction} indeed provides the following stronger result.

\begin{corollary}
  Let $\nu$  be any $\BBZ$-valued multiplicity function on $\CA = \CA(W)$. For each $H \in \CA$ choose $a_H \in \BBZ$ so that 
  $\mu : \CA \rightarrow \BBZ$ defined by $\mu(H) := a_H e_H+1$ satisfies $\mu - \omega < \nu \leq \mu$.
  Then
  \[
    \FD(\CA,\nu)^W = \FD(\CA,\mu)^W.
  \]
\end{corollary}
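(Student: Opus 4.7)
The plan is to strengthen the argument of \Cref{thm:Rindependentbasiscontradiction} from the uniform shift $\nu=-k\omega$, $\mu=-k\omega+1$ to the arbitrary hyperplane-wise congruence condition $\mu(H)\equiv 1\pmod{e_H}$. One inclusion, $\FD(\CA,\mu)^W\subseteq\FD(\CA,\nu)^W$, is immediate from $\nu\le\mu$ via \Cref{lem:compmultiplicities}, so the whole point is to prove $\FD(\CA,\nu)^W\subseteq\FD(\CA,\mu)^W$. Let $\theta\in\FD(\CA,\nu)^W$ and fix $H\in\CA$. Choose orthonormal coordinates with $\alpha_H=x_1$ and write $\theta=\sum_i(f_i/Q_H^n)\partial_{x_i}$ as in \eqref{eq:nushift}, so that $f_1=\alpha_H^{\nu(H)}g$ for some $g\in S$. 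The goal is to show $g\in\alpha_H^{\mu(H)-\nu(H)}\cdot S$, since this is exactly the divisibility needed for $\theta\in\FD(\CA,\mu)$.

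To obtain this divisibility, I would exploit $W$-invariance exactly as in \Cref{thm:Rindependentbasiscontradiction}. Let $r\in\refl$ be the reflection with $\Fix(r)=H$, and let $\epsilon$ be the primitive $e_H$-th root of unity with $r(\alpha_H)=\epsilon\alpha_H$. The factorization $Q=\alpha_H Q_H$ and the equality $r(Q)=\epsilon Q$ give $r(Q_H)=Q_H$. Comparing
\[
r\big(\theta(\alpha_H)\big)=\frac{r(\alpha_H)^{\nu(H)}r(g)}{Q_H^n}=\epsilon^{\nu(H)}\,\alpha_H^{\nu(H)}\,\frac{r(g)}{Q_H^n}
\]
with the $W$-invariance identity $r(\theta(\alpha_H))=(r\theta)(r(\alpha_H))=\epsilon\,\theta(\alpha_H)$ yields the clean relation $r(g)=\epsilon^{1-\nu(H)}g$.

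The key observation is now that the hypothesis $\mu(H)=a_He_H+1$ forces $\mu(H)\equiv 1\pmod{e_H}$, hence $\epsilon^{1-\mu(H)}=1$. Setting $k\eqdef \mu(H)-\nu(H)$, we get $\epsilon^{1-\nu(H)}=\epsilon^{k+(1-\mu(H))}=\epsilon^{k}$, so $r(g)=\epsilon^{k}g$. The complementary hypothesis $\mu-\omega<\nu$ gives $0\le k<e_H$. If $k=0$ there is nothing to prove. If $1\le k<e_H$, \Cref{lem:quasiinvarinantpoly} applies verbatim and yields $g\in\alpha_H^{k}\cdot S=\alpha_H^{\mu(H)-\nu(H)}\cdot S$. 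Since $H$ was arbitrary, this shows $\theta\in\FD(\CA,\mu)^W$, completing the proof.

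The only subtle point—and really the reason the statement is sharp—is the interplay between the two hypotheses on $\mu$: the congruence $\mu(H)\equiv 1\pmod{e_H}$ is what makes the exponent $1-\nu(H)$ reduce to $\mu(H)-\nu(H)$ modulo $e_H$, while the inequality $\mu-\omega<\nu$ is precisely what keeps that exponent in the range $\{1,\dots,e_H-1\}$ where \Cref{lem:quasiinvarinantpoly} is applicable. No serious obstacle is expected, as the argument is a direct generalization of the proof of \Cref{thm:Rindependentbasiscontradiction} with $k$ replaced by the hyperplane-dependent shift $a_H$.
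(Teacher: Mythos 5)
Your proof is correct and is exactly the argument the paper intends: the paper gives no separate proof of this corollary, merely noting that the proof of \Cref{thm:Rindependentbasiscontradiction} generalizes, and your write-up carries out that generalization faithfully (the relation $r(g)=\epsilon^{1-\nu(H)}g$, the reduction $1-\nu(H)\equiv\mu(H)-\nu(H)\pmod{e_H}$ via $\mu(H)\equiv 1\pmod{e_H}$, and the range condition $0\le\mu(H)-\nu(H)<e_H$ needed to invoke \Cref{lem:quasiinvarinantpoly}). No gaps.
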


%%%%%%%%%%%%%%%%%%%%%%%%%%%%%%%%%%%%%%%%%%%%%%%%%%%%%%%%%%%%%%%%%%%%%%
\subsection{Construction of the bases}
\label{sec:basisconst}

We prove \Cref{thm:Hodgefiltration} in several steps.

\begin{proposition}
\label{prop:primwelldef}
  Let $k \in \BBZ$.
  The map
  \[
    \prim : \FD(\CA,-k\omega+1)^W \longrightarrow \FD(\CA,-(k+1)\omega+1)^W
  \]
  is well-defined.
\end{proposition}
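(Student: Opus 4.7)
The plan is to work directly from the definition $\prim = \nabla_D$ and the identity $\nabla_D(\theta)(\alpha) = D(\theta(\alpha))$ of~\eqref{eq:defnabla}, checking the defining conditions of $\FD(\CA,-(k+1)\omega+1)^W$ locally at each hyperplane $H \in \CA$. The crucial preliminary observation is that the primitive vector field itself satisfies $D \in \FD(\CA,-\omega+1)^W$: this follows from~\eqref{eq:defD} together with $\det \Jtx \doteq J = \prod_{H\in \CA}\alpha_H^{e_H-1}$, which forces $D(\beta) \in (1/J)\cdot S$ for every $\beta \in V^*$, sharpened to the prescribed multiplicity by the $W$-invariance of $D$ and a standard application of \Cref{lem:quasiinvarinantpoly} to the reflection fixing $H$.

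Given $\theta \in \FD(\CA,-k\omega+1)^W$ and $H \in \CA$, I would fix orthonormal coordinates $\alpha_H = x_1, x_2, \ldots, x_\ell$ and verify the two types of conditions in the definition of $\FD(\CA,-(k+1)\omega+1)$ separately. The \emph{diagonal} condition on $\prim(\theta)(\alpha_H) = D(\theta(\alpha_H))$ is a routine order count: starting from $\theta(\alpha_H) \in \alpha_H^{-ke_H+1}\SalphaH$, the $\partial_{\alpha_H}$-component of $D$ drops the $\alpha_H$-order by at most $e_H$, while the $\partial_{x_i}$-components for $i \geq 2$ preserve it, yielding $\prim(\theta)(\alpha_H) \in \alpha_H^{-(k+1)e_H+1}\SalphaH$.

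The main obstacle is the \emph{off-diagonal} condition, namely $\prim(\theta)(x_j) = D(\theta(x_j)) \in \SalphaH$ for $j \geq 2$, where the naive order count only gives order $\geq -e_H$. Here one must exploit the $W$-invariance of $\theta$: since the reflection $s$ fixing $H$ acts trivially on $x_j$, the function $\theta(x_j)$ is itself $s$-invariant, so by the Chevalley-type argument underlying \Cref{lem:quasiinvarinantpoly} its expansion in powers of $\alpha_H$ only contains exponents that are multiples of $e_H$. Consequently $\partial_{\alpha_H}\theta(x_j)$ has $\alpha_H$-order at least $e_H - 1$, which exactly compensates the $\alpha_H^{-e_H+1}$ coming from the $\partial_{\alpha_H}$-component of $D$. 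Finally, the $W$-invariance of $\prim(\theta)$ follows directly from~\eqref{eq:defnabla} together with the $W$-invariance of $D$ and $\theta$.
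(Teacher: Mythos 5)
Your argument is correct and rests on the same two pillars as the paper's own proof: a crude pole-order count at each hyperplane, plus an application of \Cref{lem:quasiinvarinantpoly} to the reflection fixing $H$, using the $W$-invariance of $\theta$ (and of $D$) to gain the missing $e_H-1$ powers of $\alpha_H$ in the off-diagonal entries. The only difference is bookkeeping: the paper clears denominators globally and applies the quasi-invariance lemma once to the relative invariant $\prod_{H'}\alpha_{H'}^{(k+1)e_{H'}-1}\, D(f_iJ^{-1})$, whereas you expand $D(\theta(x_j))$ by the Leibniz rule and apply the lemma separately to $D$ (your preliminary observation $D\in\FD(\CA,-\omega+1)^W$, which is the $k=0$ instance) and to $\partial_{\alpha_H}\theta(x_j)$ --- a reorganization of the same method rather than a different route.
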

\begin{proof}
  In symbols, the well-definedness of $\prim$ means
  \[
    \theta \in \FD(\CA,-k\omega+1)^W \quad \Longrightarrow \quad \prim(\theta) \in \FD(\CA,-(k+1)\omega+1)^W.
  \]
  This is already known for $k < 0$ by~\cite[Thm.~3.22]{HMRS2017}. So we aim to show this for $k \geq 0$.
  Recall that $J = \prod \alpha_H^{ke_H-1}$.
  Let $H \in \CA$ and consider an orthonormal basis $\alpha_H = x_1,x_2,\ldots,x_\ell$ of $V^*$.
  In this basis, for~$\theta$ to belong to $\FD(\CA,-k\omega+1)^W$ entails  that there exist $f_1,\ldots,f_\ell \in S$ such that
  \[
    \theta = \sum_{i=1}^\ell f_iJ^{-1} \partial_{x_i}\quad\text{ and }\quad f_i \in x_1^{ke_H-1} \cdot S \quad \text{for } 2\leq i \leq \ell.
  \]
  Moreover,
  \[
    \prim(\theta) = \sum_{i=1}^\ell D\left(f_iJ^{-1}\right) \partial_{x_i}.
  \]
  Since
  \[
    \prod\alpha_H^{(k+1)e_H-1} D\left(f_1J^{-1}\right) \in S,
  \]
  it remains to show that
  \[
    g_i \eqdef \prod \alpha_H^{(k+1)e_H-1} D\left(f_iJ^{-1}\right) \in x_1^{(k+1)e_H-1} \cdot S
  \]
  for $2 \leq i \leq \ell$.
  First, we see that $g_i \in x_1^{ke_H} \cdot S$, because $f_i \in x_1^{ke_H-1} \cdot S \text{ for } 2\leq i \leq \ell$.
  On the other hand, observe that $g_i$ is  a relative invariant for $\det$, \cf~\cite[Sec.~6.2]{orlikterao:arrangements}.
  This means in particular that
  \[
    s(g_i) = \det(s)(g_i),
  \]
  where~$s = s_H$ is a reflection along~$H$ generating~$W_H$.
  This is because $f_iJ^{-1}$ is $s$-invariant by assumption on~$\theta$,~$D$ is~$W$-invariant, and $J$ is a relative invariant for $\det$, see~\cite[Sec.~6.2]{orlikterao:arrangements}.
  Since $s(\alpha_H) = \det^{-1}(s)\alpha_H$ and $\det(s) = \det^{-(e_H-1)}(s)$, \Cref{lem:quasiinvarinantpoly} implies that $g_i \in x_1^{ke_H} \cdot S$ is indeed contained in $x_1^{ke_H + (e_H-1)} \cdot S$, as desired.
\end{proof}

Recall the diagonal matrix $\Binf$ from~\eqref{eq:Binf} and the Saito matrix $M_\eta$ of the system of flat derivations $\eta_1, \ldots, \eta_\ell$ from \eqref{eq:flatderivations}.

\begin{proposition}
  For $k \geq 1$, we have
  \begin{equation}
    \label{eq:posprimiso}
    \primk\colvect = -M_\eta^{-1}(\Binf + k\one_\ell) \primk[k-1]\colvect.
  \end{equation}
\end{proposition}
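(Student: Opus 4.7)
The plan is to prove \eqref{eq:posprimiso} by induction on $k \geq 1$, using the base case $k=1$ from \Cref{prop:mainingredient} together with the Leibniz rule for $\nabla_D$ and the key observation that $D(M_\eta) = \one_\ell$ (this follows immediately from the decomposition $M_\eta = t_\ell \one_\ell + \Mprime(\bbt')$ given in \eqref{eq:MV}, because $D = \partial_{t_\ell}$ annihilates the entries of $\Mprime(\bbt')$).

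Setting $A_k \eqdef -M_\eta^{-1}(\Binf + k\one_\ell)$, the claim is that $\primk\colvect = A_k\,\primk[k-1]\colvect$ for $k \geq 1$. The base case $k=1$ is precisely the first equation in \eqref{eq:primiso}. For the inductive step, the plan is to apply $\prim$ componentwise to both sides of the induction hypothesis $\primk[k-1]\colvect = A_{k-1}\,\primk[k-2]\colvect$. Using the Leibniz rule for the connection $\nabla_D$ on the product of a matrix of rational functions and a column of derivations, this yields
\[
\primk\colvect \;=\; D(A_{k-1})\,\primk[k-2]\colvect \;+\; A_{k-1}\,\primk[k-1]\colvect,
\]
where $D$ is applied entrywise to $A_{k-1}$.

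To simplify the first summand, I compute $D(A_{k-1}) = -D(M_\eta^{-1})(\Binf + (k-1)\one_\ell)$, since $\Binf$ is a constant diagonal matrix. The identity $D(M_\eta^{-1}) = -M_\eta^{-1}\,D(M_\eta)\,M_\eta^{-1}$ together with $D(M_\eta) = \one_\ell$ gives $D(M_\eta^{-1}) = -M_\eta^{-2}$, so
\[
D(A_{k-1})\,\primk[k-2]\colvect \;=\; M_\eta^{-2}(\Binf + (k-1)\one_\ell)\,\primk[k-2]\colvect \;=\; -M_\eta^{-1}\,A_{k-1}\,\primk[k-2]\colvect.
\]
Applying the induction hypothesis inside again to rewrite $A_{k-1}\,\primk[k-2]\colvect$ as $\primk[k-1]\colvect$, this expression becomes $-M_\eta^{-1}\,\primk[k-1]\colvect$. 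Combining with the second summand yields
\[
\primk\colvect \;=\; \bigl(A_{k-1} - M_\eta^{-1}\bigr)\primk[k-1]\colvect \;=\; -M_\eta^{-1}\bigl(\Binf + k\one_\ell\bigr)\primk[k-1]\colvect \;=\; A_k\,\primk[k-1]\colvect,
\]
completing the induction.

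The only delicate point I expect is the correct handling of the matrix-valued Leibniz rule: one must verify that applying $\prim = \nabla_D$ componentwise to a column vector $M\,\phi$, where $M$ has rational-function entries and $\phi$ is a column of derivations in $\DerF$, indeed produces $D(M)\,\phi + M\,\prim(\phi)$. This follows from the scalar Leibniz rule $\nabla_D(p\,\phi_j) = D(p)\phi_j + p\,\nabla_D(\phi_j)$ applied entry-by-entry, together with the $F$-linearity of $\nabla$ in the second slot, but it is worth stating explicitly since the formulas \eqref{eq:primiso} and \eqref{eq:posprimiso} involve matrices whose entries lie in $F$ rather than $\BBC$.
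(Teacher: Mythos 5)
Your proof is correct and follows essentially the same route as the paper: induction on $k$ with base case $k=1$ from \Cref{prop:mainingredient}, using the Leibniz rule for $\nabla_D$ together with $D(M_\eta)=\one_\ell$ (from \eqref{eq:MV}). The only cosmetic difference is that the paper first multiplies the induction hypothesis by $-M_\eta$ before applying $\prim$, thereby avoiding the differentiation of $M_\eta^{-1}$ that you carry out via $D(M_\eta^{-1})=-M_\eta^{-1}D(M_\eta)M_\eta^{-1}$; both computations are valid and yield the same recursion.
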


\begin{proof}
  We have already seen this to be true for $k=1$ in \Cref{prop:mainingredient}.
  We assume \Cref{eq:posprimiso} to hold for a given~$k$.
  Multiplying both sides of this equation by $-M_\eta$ and applying $\prim$ gives
  \begin{align*}
    (\Binf + k\one_\ell) \primk\colvect
      &= -\prim\left(M_\eta\primk\colvect\right) \\
      &= -\left( \primk\colvect + M_\eta \primk[k+1]\colvect\right)
  \end{align*}
  by the Leibniz rule and the fact that $D(M_\eta) = \one_\ell$ in~\eqref{eq:MV}.
  This yields
  \[
    \primk[k+1]\colvect = -M_\eta^{-1}(\Binf + (k+1)\one_\ell) \primk\colvect,
  \]
  as desired.
\end{proof}

\begin{corollary}
\label{cor:linindep}
  Let $k \in \BBZ$.
  The set
  \[
    \big\{ \primk(\partial_{t_1}), \ldots, \primk(\partial_{t_\ell}) \big\}
  \]
  is linearly independent over~$S$.
\end{corollary}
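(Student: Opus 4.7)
The plan is to prove $F$-linear independence of $\{\primk(\partial_{t_1}),\ldots,\primk(\partial_{t_\ell})\}$ as vectors in the $\ell$-dimensional $F$-vector space $\DerF$; since $F=\mathrm{Frac}(S)$, this is equivalent to the claimed $S$-linear independence. Defining the matrix $A_k\in F^{\ell\times\ell}$ by $\primk\colvect = A_k\colvect$, the task reduces to showing $\det A_k\ne 0$ for every $k\in\BBZ$.

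For $k\ge 0$, I would iterate the recurrence \eqref{eq:posprimiso} with base case $A_0=\one_\ell$, obtaining an explicit product expression for $A_k$ in terms of the factors $-M_\eta^{-1}(\Binf+j\one_\ell)$, $1\le j\le k$. The determinant of each such factor is $(-1)^{\ell}\det(\Binf+j\one_\ell)/\det M_\eta$: the numerator is nonzero because the diagonal entries $e_i/h + j - 1$ of $\Binf+j\one_\ell$ are strictly positive for $j\ge 1$, and the denominator is nonzero by Saito's criterion applied to the $R$-basis $\eta_1,\ldots,\eta_\ell$ of $\DerR(-\log\hh)$, which gives $\det M_\eta\doteq\hh(\bbt)\ne 0$.

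The main obstacle is the $k\le -1$ case, because the recurrence \eqref{eq:posprimiso} was proved inductively using the Leibniz rule for the derivation $\prim$, and $\priminv$ has no such rule. To extend \eqref{eq:posprimiso} to negative $k$, I would start from $\priminv\colvect = -\Binf^{-1}M_\eta\colvect$ in Proposition~\ref{prop:mainingredient}, apply $\priminv$ once more invoking its $T$-linearity (since $\Binf$ has scalar entries), and then compute the remaining expression $\priminv(M_\eta\colvect)$ by inverting the Leibniz identity
\[
  \prim(M_\eta\psi) = D(M_\eta)\psi + M_\eta\prim(\psi) = \psi + M_\eta\prim(\psi),
\]
where $D(M_\eta)=\one_\ell$ is critical and comes from \eqref{eq:MV}. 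This produces $\priminv(M_\eta\phi) = M_\eta\priminv(\phi) - \prim^{-2}(\phi)$, and rearrangement yields the $k=-1$ case of \eqref{eq:posprimiso}, namely $\prim^{-2}\colvect = -(\Binf-\one_\ell)^{-1}M_\eta\priminv\colvect$. The same calculation, applied inductively, extends \eqref{eq:posprimiso} to every $k\le -1$, and a determinant computation parallel to the one for $k\ge 0$ then gives $\det A_{-m}\ne 0$, using that the diagonal entries of $\Binf - j\one_\ell$ are $e_i/h - j - 1 < 0$ for $j\ge 0$ (because $e_i\le h-1<(j+1)h$).
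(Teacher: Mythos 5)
Your proposal is correct, and for $k\ge 0$ it is essentially the paper's argument: both reduce the claim to the non-singularity of the transition matrices $-M_\eta^{-1}(\Binf+j\one_\ell)$ supplied by the recurrence \eqref{eq:posprimiso}, together with the non-singular base change between the $\partial_{t_i}$ and the $\partial_{x_i}$. The genuine difference is the case $k<0$: the paper simply cites \cite[Prop.~3.18]{HMRS2017}, whereas you re-derive the negative-index recurrence from scratch by inverting the Leibniz identity $\prim(M_\eta\psi)=\psi+M_\eta\prim(\psi)$ to get $\priminv(M_\eta\phi)=M_\eta\priminv(\phi)-\prim^{-2}(\phi)$, and hence $\prim^{-2}\colvect=-(\Binf-\one_\ell)^{-1}M_\eta\priminv\colvect$ and its iterates. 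This computation is valid (note that $M_\eta\priminv(\phi)$ lies in $\DerR(-\log\hh)$ because this is an $R$-module, so $\priminv$ may legitimately be applied where you apply it), and your sign analysis of the diagonal entries $e_i/h+j-1$, respectively $e_i/h-j-1$, correctly certifies non-singularity in both directions, using $0<e_i\le h-1$. The payoff of your route is a self-contained proof that makes the two halves of the statement symmetric; the cost is that you essentially reprove a result the paper imports, and you should be explicit that $\det M_\eta\doteq\hh(\bbt)\ne 0$ follows from Saito's criterion for the free divisor $\hh$ (or directly from \eqref{eq:MV}, which shows $\det M_\eta$ is monic of degree $\ell$ in $t_\ell$).
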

\begin{proof}
  This is already known for $k < 0$ from~\cite[Prop.~3.18]{HMRS2017} and is trivially true for $k = 0$.
  The case $k > 0$ follows inductively from the previous proposition since $-M_\eta^{-1}(\Binf + k\one_\ell)$ is non-singular for all~$k$.
  Finally, the base change between~$R$ and~$S$ given by $\Jxt$ is non-singular.
\end{proof}

\begin{proof}[Proof of \Cref{thm:Hodgefiltration}~\eqref{it:sxik} and~\eqref{it:rxik}]
  \Cref{prop:primwelldef} and \Cref{cor:linindep} allow us to apply our version of Saito's criterion, \Cref{thm:zieglersaito}.
  For this, it only remains to check that $\sum_i \pdeg(\primk(\eta_i)) = |-k\omega+1|$.
  This follows from 
  \begin{align*}
    \sum_{i=1}^\ell \pdeg(\primk(\eta_i))
      &= \sum_{i=1}^\ell \big(k\pdeg(D) - k + \pdeg(\eta_i) \big) \\
      &= \sum_{i=1}^\ell \big(-kh + e^*_i \big) \\
      &= -k\ell h + |\CA| = |-k\omega+1|,
  \end{align*}
  by~\eqref{eq:coxeter},~\eqref{eq:expcoexp}, and~\cite[Thm.~4.23]{orlikterao:arrangements}.
\end{proof}

In order to prove \Cref{thm:Hodgefiltration}~\eqref{it:txik} and~\eqref{it:txi}, it remains to show that the set given by $\cup_{p \leq k}\Xi^{(p)}$ is $T$-independent and generates $\FD(\CA,-k\omega+1)^W$ as a $T$-module.

\begin{proposition}
\label{prop:Tindependent}
  The set $\Xi = \big\{ \xi_j^{(k)} \mid k \in \BBZ, 1\leq j \leq \ell \big\}$ is $T$-independent.
\end{proposition}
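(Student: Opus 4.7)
The strategy is to reduce $T$-independence of the bi-infinite family $\Xi$ to the Hodge decomposition of $\Der_R$ from~\cite[Prop.~3.15]{HMRS2017} via a shift argument using $\prim$. The crucial observation is that $\prim = \nabla_{\partial_{t_\ell}}$ kills every element of $T = \BBC[t_1,\ldots,t_{\ell-1}]$; by the Leibniz rule for $\nabla$, this gives $\prim(f\phi) = f\prim(\phi)$ for all $f \in T$ and $\phi \in \Der_F$, and the same holds for $\priminv$, so every iterate $\prim^k$ is $T$-linear on its domain.

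Starting from a hypothetical nontrivial finite relation $\sum_{(j,k)\in I} f_{j,k}\xi_j^{(k)} = 0$ with $f_{j,k} \in T$, I set $K = \max\{k : f_{j,k} \neq 0\}$ and apply $\priminv^K$ term-wise. By $T$-linearity together with $\priminv^K(\xi_j^{(k)}) = \xi_j^{(k-K)}$, this produces a nontrivial relation
\[
  \sum_{(j,k)\in I} f_{j,k}\,\xi_j^{(k-K)} = 0,
\]
whose indices $k' = k-K$ all lie in $\{\ldots,-1,0\}$, with $k' = 0$ genuinely realized. Since $\priminv$ carries $\Der_R$ into $\Der_R(-\log\hh) \subseteq \Der_R$ by \Cref{prop:mainingredient} and $\eta_j \in \Der_R(-\log\hh)$, iterated application places $\xi_j^{(k')} \in \Der_R$ for every $k' \leq 0$, so the shifted relation lives entirely in $\Der_R$.

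I then invoke the Hodge decomposition $\Der_R = \bigoplus_{i \geq 0} \Hodgedec_i$ of~\cite[Prop.~3.15]{HMRS2017} recorded in~\eqref{eq:hodgefil}. Combining the $T$-linearity of $\prim^{-i}$ with the identification $\xi_j^{(1)} \doteq \partial_{t_{\ell+1-j}}$ from~\eqref{eq:Xi}, each graded piece satisfies $\Hodgedec_i = \prim^{-i}(\Hodgedec_0) = \langle \xi_j^{(1-i)} : 1 \leq j \leq \ell\rangle_T$, so the set $\{\xi_j^{(k')} : k' \leq 1,\ 1 \leq j \leq \ell\}$ is a $T$-basis of $\Der_R$, and in particular $T$-linearly independent. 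This contradicts the nontrivial shifted relation and forces all $f_{j,k} = 0$.

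The main obstacle is the step where $\priminv^K$ is applied to the hypothetical relation when $K > 1$: \Cref{prop:mainingredient} provides $\priminv$ only as a map $\Der_R \to \Der_R(-\log\hh)$, so its extension to the larger modules $\FD(\CA,-k\omega+1)^W$ for $k \geq 1$ has to be separately justified. This follows inductively from the already-established parts~\eqref{it:sxik} and~\eqref{it:rxik} of \Cref{thm:Hodgefiltration}, which show that $\prim$ sends the $R$-basis $\Xi^{(k)}$ of $\FD(\CA,-k\omega+1)^W$ to the $R$-basis $\Xi^{(k+1)}$ of $\FD(\CA,-(k+1)\omega+1)^W$ and is therefore invertible between these modules.
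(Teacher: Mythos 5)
Your overall strategy --- shift a hypothetical relation down into $\DerR$ by applying $\priminv$ repeatedly and then invoke the directness of the Hodge decomposition recalled in~\eqref{eq:hodgefil} --- is genuinely different from the paper's proof. The paper instead expands each $\xi_j^{(k)}$ explicitly as an $R$-linear combination of $\partial_{t_1},\ldots,\partial_{t_\ell}$ via iterating the recursion~\eqref{eq:posprimiso} (the coefficient matrices being products of the $-(\Binf+i\one_\ell)^{-1}M_\eta$, hence polynomial of controlled degree in $t_\ell$), reduces the relation to an identity of row vectors of polynomials, and applies $D^{e-d}$ to isolate the top coefficients using $D[M_\eta]=\one_\ell$. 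Your endgame is sound: $\prim$ and $\priminv$ are $T$-linear on their domains since $D$ annihilates $T$, and $\Hodgedec_i=\bigoplus_j T\cdot\xi_j^{(1-i)}$ by~\eqref{eq:Xi}, so a nontrivial relation supported in levels $k\le 1$ would indeed contradict the directness of the sum in~\eqref{eq:hodgefil}.

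The gap is exactly at the step you flag as the main obstacle, and your proposed fix does not close it. To pass from $\sum f_{j,k}\xi_j^{(k)}=0$ to $\sum f_{j,k}\xi_j^{(k-1)}=0$, what you actually need is that $\prim$ is \emph{injective} on $\FD(\CA,-(e-1)\omega+1)^W$: then $\sum f_{j,k}\xi_j^{(k-1)}$ is a preimage of $0$ (since $D$ kills the $f_{j,k}\in T$), and injectivity forces it to vanish. \Cref{thm:Hodgefiltration}\eqref{it:sxik} and~\eqref{it:rxik} do not supply this. They say that $\prim$ carries the $R$-basis $\Xi^{(k)}$ onto the $R$-basis $\Xi^{(k+1)}$, but $\prim$ is only $T$-linear, not $R$-linear, so its values on an $R$-basis do not determine it, let alone make it bijective (compare: the $\BBC$-linear evaluation $f\mapsto f(0)$ on $\BBC[t]$ sends the $\BBC[t]$-basis $\{1\}$ to itself and is far from injective). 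Nor can you substitute the $R$-linear map defined by $\xi_j^{(e)}\mapsto\xi_j^{(e-1)}$ on the top basis: because the recursion~\eqref{eq:posprimiso} involves the level-dependent matrices $\Binf+k\one_\ell$, that $R$-linear shift does not send $\xi_j^{(m)}$ to $\xi_j^{(m-1)}$ for $m<e$, so it cannot be applied term-wise across several levels. For $k\le 0$ the injectivity of $\prim$ is provided by \Cref{prop:mainingredient}, but for $k\ge 1$ it is essentially part of \Cref{cor:Hodgefiltration}, which downstream depends on this very proposition, so invoking it here without an independent argument is circular. An honest repair would prove injectivity of $\prim$ on $\FD(\CA,-k\omega+1)^W$ directly from~\eqref{eq:posprimiso} by a leading-coefficient analysis in $t_\ell$ (using that $e_i/h\notin\BBZ$), which is in spirit the same computation the paper carries out.
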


\begin{proof}
  Suppose $\Xi$ is not $T$-independent.
  This means that there exist $d \leq e$ and $a_{kj} \in T$ with $a_{dj}, a_{ek}$ not equal to zero for some $1 \leq j,k \leq \ell$ so that
  \begin{equation}
  \label{eq:Tcontradiction}
    \sum_{k=d}^e \sum_{i=1}^\ell a_{ki} \xi_i^{(k)} = \sum_{k=d}^e (a_{k1},\ldots,a_{k\ell})(\xi_1^{(k)},\ldots,\xi_\ell^{(k)})^\tr = 0.
  \end{equation}
  It follows from~\eqref{eq:posprimiso} that there is a non-singular matrix~$N_e$ so that
  \[
    \pmat{\xi_1^{(e)} \\ \vdots \\ \xi_\ell^{(e)}} = \primk[e] \colvect = N_e \colvect.
  \]
  Moreover, setting $B_{k+1} = -(\Binf + (k+1)\one_\ell)$, we obtain
  \[
    B_{k+1}^{-1}M_\eta\primk[k+1]\colvect = \primk\colvect.
  \]
  Recall from ~\eqref{eq:flatderivations} that $D[M_\eta]=\one_\ell$ and $D[B_k]=0$ for all~$k \in \BBZ$, where the application of~$D$ to a matrix means its application to every entry of the matrix.
  Thus, the non-singular matrix $H_k = B_{k+1}^{-1}M_\eta\cdots B_{e-1}^{-1}M_\eta B_e^{-1}M_\eta$ for $d \leq k \leq e$ (with the convention that $H_e = \one_\ell$) yields
  \[
    \pmat{ \xi_1^{(k)} \\ \vdots \\ \xi_\ell^{(k)}} = H_kN_e\colvect.
  \]
  Given this description,~\eqref{eq:Tcontradiction} is equivalent to 
  \[
  \sum_{k=d}^e (a_{k1},\ldots,a_{k\ell}) H_k = {\bf 0},
  \]
  where we used that~$N_e$ is non-singular.
  Apply $D^{e-d}$ and use that $D^{e-k+1}(H_k)={\bf 0}$ to obtain
  \[
  (a_{e1},\ldots,a_{e\ell}) B_{d+1}^{-1} D[M_\eta] B_{d+2}^{-1}D[M_\eta] \cdots B_e^{-1}D[M_\eta] = {\bf 0}.
  \]
  Since $D[M_\eta]=\one_\ell$, this implies $(a_{e1},\ldots,a_{e\ell}) = {\bf 0}$, a contradiction.
\end{proof}

\begin{proposition}
\label{prop:Tgenerating}
  With the notation as above, viewed as $T$-modules,
  \begin{equation}
  \label{eq:Tdecomp}
    \FD(\CA,-k\omega+1)^W=\bigoplus_{p \le k} \bigoplus_{i=1}^\ell T \cdot \xi_i^{(p)}.
  \end{equation}
\end{proposition}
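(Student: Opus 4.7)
My plan: first, the $T$-independence of $\bigcup_{p \le k}\Xi^{(p)}$ is already handled by \Cref{prop:Tindependent}, so the right-hand side is indeed a direct sum of $T$-modules and only the equality of the two underlying $T$-submodules of $\FDinfty^W$ remains to be proved. The inclusion~$\supseteq$ is immediate: for every $p \le k$ we have $-p\omega+1 \ge -k\omega+1$, so \Cref{lem:compmultiplicities} gives $\FD(\CA,-p\omega+1) \subseteq \FD(\CA,-k\omega+1)$, and each $\xi_i^{(p)}$ lies in $\FD(\CA,-p\omega+1)^W$ by \Cref{thm:Hodgefiltration}\eqref{it:rxik}.

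For the reverse inclusion, I would start with $\theta \in \FD(\CA,-k\omega+1)^W$ and invoke \Cref{thm:Hodgefiltration}\eqref{it:rxik} to express $\theta = \sum_{i=1}^\ell f_i\, \xi_i^{(k)}$ with $f_i \in R = T[t_\ell]$. Expanding each $f_i$ as a polynomial in $t_\ell$ with coefficients in~$T$, the task reduces to showing, for every $n \ge 0$ and every $1 \le i \le \ell$, the membership
\begin{equation*}
  t_\ell^n\, \xi_i^{(k)} \in \bigoplus_{p \le k} \bigoplus_{j=1}^\ell T \cdot \xi_j^{(p)}.
\end{equation*}

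The main device will be a recurrence that trades a factor of~$t_\ell$ for a decrement in the level. Substituting the decomposition $M_\eta = t_\ell\one_\ell + \Mprime(\bbt')$ from~\eqref{eq:MV} into the identity $M_\eta\, \primk[p]\colvect = -(\Binf+p\one_\ell)\,\primk[p-1]\colvect$ produces
\begin{equation*}
  t_\ell\, \primk[p]\colvect \;=\; -\Mprime(\bbt')\, \primk[p]\colvect \;-\; (\Binf+p\one_\ell)\,\primk[p-1]\colvect.
\end{equation*}
Since $\Mprime(\bbt')$ has entries in~$T$, the identification $\xi_j^{(p+1)} \doteq \primk[p](\partial_{t_{\ell+1-j}})$ from~\eqref{eq:Xi} rewrites this as an expression of $t_\ell\, \xi_j^{(p+1)}$ as a $T$-linear combination of $\{\xi_i^{(p+1)},\, \xi_i^{(p)} : 1 \le i \le \ell\}$, for every~$p$ and every~$j$.

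The membership then follows by induction on~$n$: the base case $n=0$ is trivial, and the inductive step writes $t_\ell^{n+1}\xi_i^{(k)} = t_\ell \cdot (t_\ell^n\xi_i^{(k)})$ and applies the recurrence termwise to each summand, enlarging the range of allowed levels from $\{k-n,\dots,k\}$ to $\{k-n-1,\dots,k\}$. The main obstacle I foresee is verifying that the recurrence of~\eqref{eq:posprimiso}, originally stated for $p \ge 1$, persists for arbitrary $p \in \BBZ$; I would dispatch this by observing that $\Binf + p\one_\ell = \tfrac{1}{h}\diag(e_i + (p-1)h)$ is invertible for every $p \in \BBZ$ in view of $0 < e_i < h$, so the recurrence can be inverted and extended iteratively in both directions. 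The remaining bookkeeping of the index reversal between $\xi_j^{(p)}$ and the entries of $\primk[p-1]\colvect$ is then routine.
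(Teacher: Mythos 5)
Your proof is correct, but it packages the argument differently from the paper, so a comparison is worthwhile. The paper also starts from \Cref{prop:Tindependent} and \Cref{thm:Hodgefiltration}\eqref{it:rxik}, but then reduces the generation statement to showing that the right-hand side of~\eqref{eq:Tdecomp} is closed under multiplication by $t_\ell$ (i.e.\ is an $R$-module); it imports the case $k\le 0$ from~\cite{HMRS2017} and runs an induction on $k\ge 0$, using the single Leibniz identity $\prim\big(t_\ell\,\xi_j^{(k)}\big)=\xi_j^{(k)}+t_\ell\,\xi_j^{(k+1)}$ together with $\prim\big(\bigoplus_{p\le k}\bigoplus_i T\xi_i^{(p)}\big)=\bigoplus_{p\le k+1}\bigoplus_i T\xi_i^{(p)}$. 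You instead make the $T$-coefficients explicit via the recurrence $t_\ell\,\primk[p]\colvect=-\Mprime(\bbt')\,\primk[p]\colvect-(\Binf+p\one_\ell)\,\primk[p-1]\colvect$ and induct on the power of $t_\ell$; this is the same underlying computation (Leibniz rule plus $D(M_\eta)=\one_\ell$), but it is uniform in $k$ and avoids citing~\cite{HMRS2017} for the $k\le 0$ base case. The price is that you need~\eqref{eq:posprimiso} for \emph{all} $p\in\BBZ$, which the paper only states for $p\ge 1$. Your observation that $\Binf+p\one_\ell=\tfrac{1}{h}\diag\big(e_i+(p-1)h\big)$ is invertible for every $p$ (since $0<e_i<h$) is correct and necessary, but invertibility alone does not establish the identity for $p\le 0$: you should also note that \Cref{prop:mainingredient} gives exactly the case $p=0$ (namely $M_\eta\colvect=-\Binf\,\priminv\colvect$), and that the downward inductive step follows by applying $\prim$ to the candidate identity at level $p-1$, simplifying with the Leibniz rule and the level-$p$ identity, and invoking the injectivity of $\prim$. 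With that one step made precise, your argument is complete.
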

\begin{proof}
  We know that the right hand side of~\eqref{eq:Tdecomp} is a direct $T$-module summand of the space on the left by \Cref{prop:Tindependent}.
  It thus remains to show that the right hand side generates the left hand side.
  Owing to \Cref{thm:Hodgefiltration}\eqref{it:rxik}, we know that $\xi_1^{(k)},\ldots,\xi_\ell^{(k)}$ form an $R$-basis for $\FD(\CA,-kw+1)^W$.
  Thus it suffices to show that the right hand side is an $R$-module.
  We know this is true if $k \le 0$ by~\cite[Thm.~3.22]{HMRS2017} and proceed by induction on~$k \geq 0$.
  From
  \[
    \prim\big(t_\ell\cdot(\xi_1^{(k)},\ldots,\xi_\ell^{(k)})\big) =
      D(t_\ell)\cdot(\xi_1^{(k)},\ldots,\xi_\ell^{(k)}) + t_\ell\cdot(\xi_1^{(k+1)},\ldots,\xi_\ell^{(k+1)}),
  \]
  we obtain
  \[
    t_\ell\cdot (\xi_1^{(k+1)},\ldots,\xi_\ell^{(k+1)}) = \prim\big(t_\ell\cdot(\xi_1^{(k)},\ldots,\xi_\ell^{(k)})\big) -
      (\xi_1^{(k)},\ldots,\xi_\ell^{(k)}).
  \]
  By induction hypothesis, we have
  \[
    t_\ell\xi_j^{(k)} \in \bigoplus_{p \le k}\bigoplus_{i=1}^\ell T \cdot \xi_i^{(p)}
  \]
  for $1 \leq j \leq \ell$, and thus
  \[
    \prim(t_\ell \xi_j^{(k)}) \in \prim\left( \bigoplus_{p \le k}\bigoplus_{i=1}^\ell T \cdot \xi_i^{(p)}\right)
      = \bigoplus_{p \le k}\bigoplus_{i=1}^\ell T \cdot \prim(\xi_i^{(p)})
      = \bigoplus_{p \le k+1}\bigoplus_{i=1}^\ell T \cdot \xi_i^{(p)}.
  \]
  We deduce that for every $1 \leq j \leq \ell$, we get
  \[
    t_\ell \xi_j^{(k+1)} \in \bigoplus_{p \le k+1} \bigoplus_{i=1}^\ell T \cdot \xi_i^{(p)}
  \]
  to complete the proof.
\end{proof}

\begin{proof}[Proof of \Cref{thm:Hodgefiltration}~\eqref{it:txik} and~\eqref{it:txi}]
    \Cref{prop:Tindependent,,prop:Tgenerating} immediately give statement~\eqref{it:txik}.
  Finally,~\eqref{it:txi} follows, because every element in $\FD(\CA,-\infty)^W$ is contained in $\FD(\CA,-k\omega+1)^W$ for some $k \in \BBZ$.
\end{proof}

\subsection{Universal vector fields for reflection arrangements}
\label{sec:universality2}

In this final section, we conclude the universality properties for reflection arrangements.

\medskip

Our first result generalizes~\cite[Prop.~2.5]{Wak2011} to the complex case, and in particular, it implies \Cref{thm:evenfreeness}.
Recall the Euler derivation~$E$ from~\eqref{eq:Euler}.

\begin{theorem}
\label{thm:universality}
  With the notation as above, $\primk(E)$ is $(-k\omega)$-universal for any $k \in \BBZ$.
\end{theorem}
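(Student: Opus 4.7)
The plan is to apply the homogeneous form of Saito's criterion (\Cref{thm:zieglersaito}, condition~\eqref{eq:zieglersaito3}) to the $\ell$ derivations $\nabla_{\partial_{x_i}}(\primk(E))$ for $i = 1, \ldots, \ell$, thereby showing they form an $S$-basis of $\FD(\CA, -k\omega)$. Since $\partial_{x_1}, \ldots, \partial_{x_\ell}$ constitute the standard $S$-basis of $\DerS$ and $\Phi_{\primk(E)}(\partial_{x_i}) = \nabla_{\partial_{x_i}}(\primk(E))$, the basis statement will immediately yield that the $S$-linear map $\Phi_{\primk(E)} \colon \DerS \to \FD(\CA, -k\omega)$ is an isomorphism of $S$-modules, which is the desired universality.

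Well-definedness of $\Phi_{\primk(E)} \colon \DerS \to \FD(\CA, -k\omega)$ follows from \Cref{lem:booleanmultiplicitywelldefined} applied with $\mu \equiv 0$ and $\nu = -k\omega$, together with the fact that $\primk(E) = \xi_1^{(k)}$ lies in $\FD(\CA, -k\omega + 1)$ by \Cref{thm:Hodgefiltration}\eqref{it:sxik}. For the homogeneous degree count, each $\nabla_{\partial_{x_i}}(\primk(E))$ has polynomial degree $-kh$ by \eqref{eq:pdegnabla}, using $\pdeg(\partial_{x_i}) = 0$ and $\pdeg(\primk(E)) = \pdeg(E) - kh = 1 - kh$; the total $-k \ell h$ coincides with $|{-}k\omega|$ via the Coxeter identity~\eqref{eq:coxeter}.

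The main technical obstacle is the $S$-linear independence of the family $\{\nabla_{\partial_{x_i}}(\primk(E))\}_i$, equivalently the non-vanishing of the Jacobian determinant $\det\big(\partial_{x_i}(\primk(E)(x_j))\big)_{i,j}$. I would prove this by induction on $|k|$. The base case $k = 0$ is immediate since $\nabla_{\partial_{x_i}}(E) = h^{-1} \partial_{x_i}$, yielding the non-singular Saito matrix $h^{-1} \one_\ell$. For the inductive step $k \to k+1$ (with $k \geq 0$), the key tool is the flatness relation $[\nabla_\theta, \prim] = \nabla_{[\theta, D]}$ on $\DerF$, a direct consequence of the affine form of $\nabla$. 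Applied to $\theta = \partial_{x_i}$ and the field $\primk(E)$, it produces the recursion
\[
  \nabla_{\partial_{x_i}}(\primk[k+1](E)) = \prim\big(\nabla_{\partial_{x_i}}(\primk(E))\big) + \nabla_{[\partial_{x_i}, D]}(\primk(E)),
\]
with $[\partial_{x_i}, D] = \sum_j \partial_{x_i}(D(x_j)) \partial_{x_j} \in \FDinfty$. Expanding the bracket term in the inductively provided $S$-basis of $\FD(\CA, -k\omega)$, and controlling the first summand via the explicit matrix identity~\eqref{eq:posprimiso} derived from \Cref{prop:mainingredient}, identifies the stage-$(k+1)$ Saito matrix as an invertible transformation of the stage-$k$ one, securing non-vanishing of the determinant. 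The descending range $k < 0$ is handled by the symmetric argument using $\priminv$, again supplied by \Cref{prop:mainingredient}.
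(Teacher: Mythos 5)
Your overall architecture---well-definedness of $\Phi_{\primk(E)}$ via \Cref{lem:booleanmultiplicitywelldefined}, the polynomial degree count $\sum\pdeg\big(\nabla_{\partial_{x_i}}(\primk(E))\big)=-k\ell h=|{-}k\omega|$, and the homogeneous Saito criterion \Cref{thm:zieglersaito}\eqref{eq:zieglersaito3}---coincides with the paper's proof. The gap is in the linear-independence step, which you correctly single out as the main obstacle. Your recursion
\[
  \nabla_{\partial_{x_i}}(\primk[k+1](E)) \;=\; \prim\big(\nabla_{\partial_{x_i}}(\primk(E))\big) \;+\; \nabla_{[\partial_{x_i},D]}(\primk(E))
\]
is a correct identity (the connection is flat, so $[\nabla_\theta,\nabla_D]=\nabla_{[\theta,D]}$), but the conclusion you draw---that the stage-$(k{+}1)$ Saito matrix is an invertible transformation of the stage-$k$ one---does not follow from it. Already at $k=0$ the first summand vanishes identically, since $\nabla_{\partial_{x_i}}(E)=\tfrac1h\partial_{x_i}$ is a flat section; the entire stage-$1$ family is $\tfrac1h[\partial_{x_i},D]$, whose Saito matrix $\tfrac1h\big(\partial_{x_i}(D(x_j))\big)_{i,j}$ is a nontrivial object about which the induction hypothesis says nothing. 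Nor can \eqref{eq:posprimiso} ``control the first summand'': that identity concerns $\primk(\partial_{t_j})$, and the bridge from those vector fields to $\nabla_{\partial_{x_i}}(\primk(E))$ is precisely the missing ingredient. Note also that $\det M\big(\nabla_{\partial_{x_1}}(\primk(E)),\ldots\big)\doteq (QJ)^{-k}$, so consecutive stages differ by the non-unit factor $(QJ)^{-1}$; any ``invertible transformation'' between the two families would have to be over $F$, and its existence is equivalent to the independence you are trying to prove.

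The repair---which is the paper's argument---is to apply your commutator identity in the $\bbt$-coordinates, where the bracket term disappears: since $[\partial_{t_i},D]=[\partial_{t_i},\partial_{t_\ell}]=0$, one has $\nabla_{\partial_{t_i}}\prim=\prim\nabla_{\partial_{t_i}}$, and together with $\nabla_{\partial_{t_i}}(E)=\tfrac1h\partial_{t_i}$ this gives $\nabla_{\partial_{t_i}}(\primk(E))=\tfrac1h\primk(\partial_{t_i})$ exactly. The $S$-independence of the right-hand sides is \Cref{cor:linindep} (equivalently, the $R$-independence of $\Xi^{(k+1)}$ from \Cref{thm:Hodgefiltration}\eqref{it:rxik} via \eqref{eq:Xi}), and the passage back to $\nabla_{\partial_{x_i}}(\primk(E))$ is the nonsingular base change $\partial_{x_i}=\sum_j\tfrac{\partial t_j}{\partial x_i}\,\partial_{t_j}$ using $F$-linearity of $\nabla$ in its first argument. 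Once this is done, no induction on $k$ is needed. (For $k\le 0$ the paper simply cites \cite[Thm.~3.20]{HMRS2017}; your proposed ``symmetric argument with $\priminv$'' would in any case require the same $\bbt$-coordinate commutativity.)
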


\begin{proof}
  For $k \leq 0$, this is the special case of~\cite[Thm.~3.20]{HMRS2017} for $\theta_i = \partial_{x_i}$.
  It remains to consider the case $k \geq 0$.
  Set $E_k \eqdef \primk(E)$.
  By \Cref{prop:primwelldef}, we have that $E_k \in \FD(\CA,-k\omega+1)$ which in turn implies that $\Phi_{E_k} : \DerS \rightarrow \FD(\CA,-k\omega)$ is well-defined, by \Cref{lem:booleanmultiplicitywelldefined}.
  Moreover, the $S$-independence of $\{ \nabla_{\partial_{x_i}}(E_k) \mid 1 \leq i \leq \ell \}$ follows from the $R$-independence of $\{ \nabla_{\partial_{t_i}}(E_k) \} = \{ \tfrac{1}{h} \nabla_D^k(\partial_{t_i}) \}$ proven in \Cref{thm:Hodgefiltration}\eqref{it:rxik} and~\eqref{eq:Xi}.
  Here, we used that $\nabla_{\partial_{t_i}}\nabla_{\partial_{t_j}} = \nabla_{\partial_{t_j}}\nabla_{\partial_{t_i}}$ and that $\nabla_{\partial_{t_i}}(E) = \tfrac{1}{h}\partial_{t_i}$.
  Finally, we deduce that $\{ \nabla_{\partial_{x_i}}(E_k) \mid 1 \leq i \leq \ell \}$ is an $S$-basis of $\FD(\CA,-k\omega)$ from the version of the Saito criterion given in \Cref{thm:zieglersaito}\eqref{eq:zieglersaito3}.
  In the later, we remark that indeed $\nabla_{\partial_{x_i}}(E_k)$ is homogeneous, since~$E_k$ is homogeneous (\cf~\eqref{eq:pdegnabla}), and so we obtain
  \begin{equation*}
    \sum \pdeg\big(\nabla_{\partial_{x_i}}(E_k)\big)
      = \sum \big(-kh + \pdeg(\partial_{x_i})\big)
      = -k\ell h = |-k\omega|,
  \end{equation*}
  again by~\eqref{eq:coxeter},~\eqref{eq:expcoexp}, and~\cite[Thm.~4.23]{orlikterao:arrangements}.
\end{proof}

The following properties are the complex counterparts of the results in~\cite[Sec.~2]{Wak2011}, where we remark that the arguments are similar.
Recall the order multiplicity $\omega : \CA \rightarrow \BBZ$ on the reflection arrangement~$\CA$ given by $\omega(H) = e_H$.
Our aim here is to generalize \Cref{thm:01multi} to multiplicity functions $\mu : \CA \rightarrow \BBZ$ with $-\omega+1 \le \mu \le 1$.

\begin{proposition}
\label{prop:-1multi}
  Let $\nu$ be any $\BBZ$-valued multiplicity on~$\CA$ and let $\zeta \in \FDinfty^W$ be $\nu$-universal.
  Then
  \[
    \Phi_\zeta : \FD(\CA,-\omega+1) \rightarrow \FD(\CA,\nu-\omega+1)
  \]
  is an isomorphism of~$S$-modules.
\end{proposition}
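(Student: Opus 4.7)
The approach is to follow the template of the proof of Theorem~\ref{thm:01multi}, now for the specific (negative) multiplicity $-\omega+1$. Since $\omega(H) = e_H \geq 2$ throughout, the containment $-\omega+1 \leq \mathbf{0}$ gives $\DerS \subseteq \FD(\CA,-\omega+1)$ by Lemma~\ref{lem:compmultiplicities}, and similarly $\FD(\CA,\nu-\omega+1) \subseteq \FD(\CA,\nu)$. Granting well-definedness of the map $\Phi_\zeta : \FD(\CA,-\omega+1) \to \FD(\CA,\nu-\omega+1)$ (the core of the argument, addressed below), injectivity follows from the $F$-linear injectivity of $\Phi_\zeta$ on $\DerF$ (itself a consequence of the $S$-linear independence of $\{\nabla_{\partial_{x_i}}(\zeta)\}_i$ implied by $\nu$-universality), and surjectivity is automatic: any $\phi \in \FD(\CA,\nu-\omega+1) \subseteq \FD(\CA,\nu)$ can be written as $\Phi_\zeta(\theta)$ with $\theta \in \DerS \subseteq \FD(\CA,-\omega+1)$, using the surjectivity $\Phi_\zeta(\DerS) = \FD(\CA,\nu)$ given by $\nu$-universality.

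To set up well-definedness, Lemma~\ref{lem:booleanmultiplicitywelldefined} applied with $\mu = \mathbf{0}$ together with the $\nu$-universality of $\zeta$ yields $\zeta \in \FD(\CA,\nu+1)$. Fix $H \in \CA$ with reflection $s_H$ of order $e_H$ and primitive $e_H$-th root of unity $\epsilon$ such that $s_H(\alpha_H) = \epsilon \alpha_H$, and choose an orthonormal basis $\alpha_H = x_1, x_2, \ldots, x_\ell$ of $V^*$ on which $s_H$ acts trivially on $x_2,\ldots,x_\ell$. The standard transformation rule of derivations turns the $W$-invariance of $\zeta$ into $s_H(\zeta(x_1)) = \epsilon\,\zeta(x_1)$ and $s_H(\zeta(x_i)) = \zeta(x_i)$ for $i \geq 2$. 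Since $\SalphaH$ is a discrete valuation ring whose uniformizer $\alpha_H$ is itself an $s_H$-eigenvector of eigenvalue $\epsilon$, and since units in $\SalphaH$ that are $s_H$-eigenvectors must have eigenvalue $1$, any $u \in \SalphaH$ satisfying $s_H(u) = \epsilon^j u$ has $\alpha_H$-valuation congruent to $j$ modulo $e_H$. In particular, the valuation of $\zeta(x_1)$ lies in $1 + e_H \BBZ$ and those of $\zeta(x_i)$ for $i \geq 2$ lie in $e_H \BBZ$; this mod-$e_H$ congruence is the decisive input from $W$-invariance.

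The valuation cancellation now proceeds as follows. For $\theta \in \FD(\CA,-\omega+1)$ one has $\theta(x_1) \in \alpha_H^{1-e_H}\SalphaH$ and $\theta(x_i) \in \SalphaH$ for $i \geq 2$, so the only possible obstacle is a pole of order at most $e_H - 1$ in $\theta(x_1)$. Expanding
\[
\nabla_\theta(\zeta)(x_i) = \theta(\zeta(x_i)) = \sum_{j=1}^\ell \theta(x_j)\,\partial_{x_j}(\zeta(x_i)),
\]
the critical term $\theta(x_1)\,\partial_{x_1}(\zeta(x_i))$ combines a valuation $\geq 1-e_H$ with a derivative whose $\alpha_H$-valuation, by the congruence from the previous paragraph, is $\geq e_H - 1$ for $i \geq 2$ (since above the constant term the next nonvanishing exponent is at least $e_H$) and $\geq \nu(H)$ for $i = 1$ (since the minimal nonvanishing exponent is $\nu(H)+1$). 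The remaining terms $\theta(x_j)\,\partial_{x_j}(\zeta(x_i))$ for $j \geq 2$ manifestly lie in $\SalphaH$ or in $\alpha_H^{\nu(H)+1}\SalphaH$ respectively. Combining these estimates gives $\theta(\zeta(x_i)) \in \SalphaH$ for $i \geq 2$ and $\theta(\zeta(x_1)) \in \alpha_H^{\nu(H)+1-e_H}\SalphaH$, placing $\nabla_\theta(\zeta)$ in $\FD(\CA,\nu-\omega+1)$, as required.

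The core technical obstacle is precisely this valuation cancellation: without the $W$-invariance of $\zeta$, the $\alpha_H^{1-e_H}$-pole of $\theta(\alpha_H)$ would in general push $\nabla_\theta(\zeta)$ out of $\FDinfty$, and the mod-$e_H$ congruence on $\alpha_H$-valuations forced by $W$-invariance is exactly what is needed to absorb it. The mechanism parallels the one underpinning the proof of Theorem~\ref{thm:Rindependentbasiscontradiction}.
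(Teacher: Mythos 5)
Your proof is correct, but it is organized quite differently from the paper's. The paper fixes the explicit basis $\{\partial_{t_1},\ldots,\partial_{t_\ell}\}$ of $\FD(\CA,-\omega+1)$ from \Cref{thm:Hodgefiltration}\eqref{it:sxik}, shows each $\nabla_{\partial_{t_j}}\zeta$ lands in $\FD(\CA,\nu-\omega+1)$ by combining $J\partial_{t_j}\in\DerS$ (hence $J\nabla_{\partial_{t_j}}\zeta\in\FD(\CA,\nu)$ by universality) with the relative-invariance argument of \Cref{prop:primwelldef} for the remaining coordinates, and then closes with the Saito criterion (\Cref{thm:zieglersaito}) via the determinant identity $\det\big((\nabla_{\partial_{t_j}}\zeta)(x_i)\big)\doteq J^{-1}Q_\nu=Q_{\nu-\omega+1}$. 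You instead prove well-definedness element-wise for an arbitrary $\theta\in\FD(\CA,-\omega+1)$, by tracking $\alpha_H$-valuations of the coordinates of $\zeta$ through the eigenvalue congruence forced by $s_H$-invariance; this is the same underlying mechanism (\Cref{lem:quasiinvarinantpoly}, applied in $\SalphaH$ after clearing denominators by an $s_H$-invariant element outside $\langle\alpha_H\rangle$ --- you should make the ``power series'' language rigorous this way, exactly as in \Cref{prop:primwelldef} and \Cref{thm:Rindependentbasiscontradiction}), but applied directly to $\zeta$ rather than to the images of a chosen basis. Your injectivity step (non-singularity of the coefficient matrix of $\zeta$ over $F$, needed because $\FD(\CA,-\omega+1)\supseteq\DerS$ rather than the reverse) and your surjectivity step ($\Phi_\zeta(\DerS)=\FD(\CA,\nu)\supseteq\FD(\CA,\nu-\omega+1)$ combined with $\DerS\subseteq\FD(\CA,-\omega+1)$) are both valid and pleasantly cheap. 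What your route buys is independence from \Cref{thm:Hodgefiltration} and from the Saito criterion --- you never need to know that $\FD(\CA,-\omega+1)$ is free; what the paper's route buys is the explicit homogeneous basis $\{\nabla_{\partial_{t_j}}\zeta\}$ of $\FD(\CA,\nu-\omega+1)$ together with its exponents, which is the form in which the proposition is actually used afterwards.
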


\begin{proof}
  We have seen in \Cref{thm:Hodgefiltration}\eqref{it:sxik} that $\{ \partial_{t_1},\ldots,\partial_{t_\ell} \}$ is an $S$-basis of $\FD(\CA,-\omega+1)$.
  We thus aim to show that $\{ \nabla_{\partial_{t_1}}\zeta,\ldots,\nabla_{\partial_{t_\ell}}\zeta\}$ is an $S$-basis of $\FD(\CA,\nu-\omega+1)$.

  It is immediate from~\eqref{eq:defD} that $J \partial_{t_j} \in \DerS$.
  This implies $J \nabla_{\partial_{t_j}}\zeta \in \FD(\CA,\nu)$ by the linearity of~$\nabla$ in the first parameter and the $\nu$-universality of~$\zeta$.
  Thus we have that $J(\nabla_{\partial_{t_j}}\zeta)(\alpha_H) \in \alpha_H^{\nu(H)}\SalphaH$ or, equivalently,
  \[
    (\nabla_{\partial_{t_j}}\zeta)(\alpha_H) \in \alpha_H^{\nu(H)-(e_H-1)}\SalphaH
  \]
  for a given $H \in \CA$.
  Now let $\alpha_H=x_1,x_2,\ldots,x_\ell$ be an orthonormal basis of~$V^*$ and for $2 \leq i \leq \ell$ set
  \[
    g_i = Q_H^N J(\nabla_{\partial_{t_j}}\zeta)(x_i) \in S,
  \]
  for some $N \geq 0$, where $Q_H = Q/\alpha_H$.
  Analogous to the argument in the proof of \Cref{prop:primwelldef}, we obtain that $g_i \in \alpha_H^{e_H-1}S$, and so
  \[
    (\nabla_{\partial_{t_j}}\zeta)(x_i) = Q_H^{-N} g_i J^{-1} \in \SalphaH,
  \]
  for $2 \leq i \leq \ell$.
  We thus have $\nabla_{\partial_{t_j}}\zeta \in \FD(\CA,\nu-\omega+1)$.
  Computing
  \[
    \det\big( (\nabla_{\partial_{t_j}}\zeta)(x_i) \big) \doteq J^{-1} \det\big((\nabla_{\partial_{x_j}}\zeta)(x_i) \big) \doteq J^{-1} Q_{\nu} = Q_{\nu-\omega+1}
  \]
  yields the statement, thanks to \Cref{thm:zieglersaito}.
\end{proof}

\begin{theorem}
\label{thm:zetapmshifts}
  Let $\zeta \in \FDinfty^W$ be $\nu$-universal for a $\BBZ$-valued multiplicity function~$\nu$ on~$\CA$.
  Let $\mu : \CA \rightarrow \BBZ$ be a multiplicity with $-\omega+1 \leq \mu \leq 1$.
  Then
  \[
    \Phi_\zeta : \FD(\CA,\mu) \rightarrow \FD(\CA,\nu+\mu)
  \]
  is an isomorphism of~$S$-modules.
  In particular, $\FD(\CA,\mu)$ is free if and only if $\FD(\CA,\nu+\mu)$ is free.
\end{theorem}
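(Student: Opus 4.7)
The strategy is to use \Cref{prop:-1multi} as the base $S$-module isomorphism $\Phi_\zeta : \FD(\CA,-\omega+1) \longrightarrow \FD(\CA,\nu-\omega+1)$ and restrict it to $\FD(\CA,\mu)$. Since $\mu \geq -\omega+1$, \Cref{lem:compmultiplicities} yields the inclusions $\FD(\CA,\mu) \subseteq \FD(\CA,-\omega+1)$ and $\FD(\CA,\nu+\mu) \subseteq \FD(\CA,\nu-\omega+1)$, so it suffices to establish, for every $\theta \in \FD(\CA,-\omega+1)$, the equivalence
\[
  \theta \in \FD(\CA,\mu) \iff \Phi_\zeta(\theta) \in \FD(\CA,\nu+\mu).
\]
This is a local condition at each $H \in \CA$. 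The final freeness statement then follows at once from the resulting $S$-module isomorphism.

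The first key step is to extract the local form of $\zeta$ at a fixed $H \in \CA$ from its $W$-invariance together with $\nu$-universality. By \Cref{lem:universalunit}, $\zeta \in \FD(\CA,\nu+1)$ with $\nu$ maximal, so in orthonormal coordinates $\alpha_H = x_1,\ldots,x_\ell$ we can write $\zeta(\alpha_H) = \alpha_H^{\nu(H)+1} u_H$ with $u_H \in \SalphaH$ a unit (nonzero constant term at $\alpha_H$). Evaluating $s_H \cdot \zeta = \zeta$ on $\alpha_H$, as in the proof of \Cref{thm:Rindependentbasiscontradiction}, gives $s_H(u_H) = \epsilon^{-\nu(H)} u_H$, where $\epsilon$ is the primitive $e_H$-th root of unity with $s_H(\alpha_H) = \epsilon\alpha_H$. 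Since the constant term of $u_H$ lies in $\BBC^\times$ and is $s_H$-fixed, this forces $\epsilon^{\nu(H)}=1$, i.e.\ $e_H \mid \nu(H)$, and consequently $u_H$ is itself $s_H$-invariant.

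Next I would compute
\[
  \Phi_\zeta(\theta)(\alpha_H) = (\nu(H)+1)\alpha_H^{\nu(H)}\theta(\alpha_H)\,u_H + \alpha_H^{\nu(H)+1}\theta(u_H),
\]
and analyze the $\alpha_H$-valuations. Set $a \eqdef v_{\alpha_H}(\theta(\alpha_H)) \geq -e_H+1$. The $s_H$-invariance of $u_H$, together with \Cref{lem:quasiinvarinantpoly} applied to $\partial_{x_1}(u_H)$ as a relative invariant (just as in the proof of \Cref{prop:primwelldef}), yields $\partial_{x_1}(u_H) \in \alpha_H^{e_H-1}\SalphaH$. Combined with $\theta(x_1) \in \alpha_H^{-e_H+1}\SalphaH$ and $\theta(x_i) \in \SalphaH$ for $i \geq 2$, this shows $\theta(u_H) \in \SalphaH$, so the second summand has $\alpha_H$-valuation at least $\nu(H)+1$. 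Because $e_H \mid \nu(H)$ with $e_H \geq 2$ forces $\nu(H)+1 \neq 0$ and $u_H$ is a unit, the first summand has $\alpha_H$-valuation exactly $\nu(H)+a$.

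A short case analysis on the allowed range $\mu(H) \in \{-e_H+1,\ldots,1\}$ now yields the desired equivalence: for $\mu(H) \leq 0$ the first summand dominates whenever $a \leq 0$, giving the sharp comparison, while $a \geq 1$ pushes both summands past $\nu(H)+1$; for $\mu(H) = 1$ the valuation reaches $\nu(H)+1$ precisely when $a \geq 1$. I expect the main obstacle to be this valuation comparison, which crucially exploits the $W$-invariance twice: once to force $e_H \mid \nu(H)$ (so that the leading coefficient $\nu(H)+1$ does not accidentally vanish), and once to produce the extra factor $\alpha_H^{e_H-1}$ in $\partial_{x_1}(u_H)$ that absorbs the negative valuation of $\theta(\alpha_H)$.
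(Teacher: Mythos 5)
Your proposal is correct and follows essentially the same route as the paper: restrict the isomorphism of \Cref{prop:-1multi} to $\FD(\CA,\mu)$ and verify, hyperplane by hyperplane, the equivalence $\theta(\alpha_H)\in\alpha_H^{\mu(H)}\SalphaH \Leftrightarrow \big(\Phi_\zeta(\theta)\big)(\alpha_H)\in\alpha_H^{\nu(H)+\mu(H)}\SalphaH$ by a Leibniz expansion of $\theta\big(\alpha_H^{\nu(H)+1}u_H\big)$ and a valuation comparison. The only real difference is organizational: the paper absorbs the term $\theta(\alpha_H)\tfrac{\partial u_H}{\partial\alpha_H}$ into the unit $U=\alpha_H\tfrac{\partial u_H}{\partial\alpha_H}+(\nu(H)+1)u_H$, whereas you bound it separately via $\tfrac{\partial u_H}{\partial\alpha_H}\in\alpha_H^{e_H-1}\SalphaH$; your preliminary observation that $W$-invariance forces $e_H\mid\nu(H)$ (hence $\nu(H)+1\neq 0$ and $u_H$ is $s_H$-invariant) is a worthwhile addition, since precisely this fact is needed, but left implicit in the paper, for $U$ to be a unit.
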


Before proving \Cref{thm:zetapmshifts}, we use the fact that $\zeta = \primk[-m](E)$ is $(m\omega)$-universal for any $m\in \BBZ$, by~\Cref{thm:universality} to obtain the special case of the \defn{generalized Ziegler multiplicity} on a reflection arrangement~$\CA$ given by $m\omega-1$ for $m \in \BBZ$.
For $m=1$, this multiplicity was originally considered by Ziegler~\cite{ziegler:multiarrangements} (assigning the number of reflections along~$H$ to any reflecting hyperplane), and shown  to be free in general in~\cite{ziegler:multiarrangements} and~\cite{HR2014}.

\begin{corollary}
  Let $\{ e_1,\ldots,e_\ell\}$ be the set of exponents of $(\CA,\omega-1)$.
  For any $m \in \BBZ$, the multi-arrangement $(\CA, m\omega-1)$ is free and its exponents are given by
  \[
    \big\{ e_1+(m-1)h,\dots,e_\ell+(m-1)h \big\}.
  \]
\end{corollary}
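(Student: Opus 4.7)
The plan is to invoke \Cref{thm:zetapmshifts} with the $W$-invariant vector field $\zeta_m \eqdef \prim^{-m}(E) \in \FDinfty^W$, which is $(m\omega)$-universal by \Cref{thm:universality} (specialized to $k = -m$). Since $\omega(H) = e_H \geq 2$ for every $H \in \CA$, the constant multiplicity $\mu \equiv -1$ always lies in the admissible range $-\omega + 1 \leq \mu \leq 1$. Consequently, for each $m \in \BBZ$, \Cref{thm:zetapmshifts} produces an $S$-module isomorphism
\[
  \Phi_{\zeta_m} \colon \FD(\CA,-1) \xrightarrow{\ \sim\ } \FD(\CA, m\omega - 1),
\]
together with the corresponding equivalence of freeness.

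For the freeness assertion, I would first specialize to $m = 1$: the hypothesis that $\{e_1,\ldots,e_\ell\}$ is the exponent set of $(\CA,\omega-1)$ (equivalently, the freeness of the Ziegler multi-arrangement established in \cite{ziegler:multiarrangements} and \cite{HR2014}) transfers via $\Phi_{\zeta_1}$ to the freeness of $(\CA,-1)$. Running $\Phi_{\zeta_m}$ for arbitrary $m$ then yields the freeness of $(\CA, m\omega - 1)$ for every $m \in \BBZ$.

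To pin down the exponents, I would compute $\pdeg \zeta_m$ iteratively from \eqref{eq:pdegnabla}. Since $\pdeg E = 0$ and $\pdeg D = -(h-1)$, each application of $\prim^{-1}$ raises the polynomial degree by~$h$, whence $\pdeg \zeta_m = mh$, and $\Phi_{\zeta_m}$ shifts polynomial degrees by $mh - 1$. The case $m = 1$ then pins the exponents of $(\CA,-1)$ down to $\{e_i - (h-1)\}$, and shifting by $mh - 1$ yields
\[
  \{(e_i - h + 1) + (mh - 1)\} = \{e_i + (m-1)h\}
\]
as the exponents of $(\CA, m\omega - 1)$, as claimed. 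There is no real obstacle here: the entire shifting machinery is encapsulated in \Cref{thm:zetapmshifts}, and the key observation driving the proof is simply that $\mu \equiv -1$ is always admissible, so that the single universal vector field $\zeta_m$ suffices to transport both freeness and graded structure simultaneously.
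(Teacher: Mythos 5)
Your argument is correct and is precisely the one the paper intends: the corollary is stated as an immediate consequence of \Cref{thm:zetapmshifts}, applied with the $(m\omega)$-universal field $\prim^{-m}(E)$ supplied by \Cref{thm:universality} and the always-admissible multiplicity $\mu\equiv-1$, the case $m=1$ anchoring everything to the known freeness and exponents of the Ziegler multi-arrangement $(\CA,\omega-1)$. One small slip: $\pdeg E=1$ (its coefficients are linear), not $0$, so by \eqref{eq:pdegnabla} one gets $\pdeg\big(\prim^{-m}(E)\big)=mh+1$, the map $\Phi_{\prim^{-m}(E)}$ shifts polynomial degrees by $mh$ rather than $mh-1$, and $\exp\big(\FD(\CA,-1)\big)=\{e_1-h,\ldots,e_\ell-h\}$. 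Since only the difference of the shifts for general $m$ and for $m=1$, namely $(m-1)h$, enters the final computation, your off-by-one cancels and the claimed exponents $\{e_i+(m-1)h\}$ come out correctly.
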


\begin{proof}[Proof of \Cref{thm:zetapmshifts}]
  As $\FD(\CA,\mu) \subseteq \FD(\CA,-\omega+1)$ and $\FD(\CA,\nu+\mu) \subseteq \FD(\CA,\nu-\omega+1)$, \Cref{prop:-1multi} implies that $\Phi_\zeta$ is injective on $\FD(\CA,\mu)$.
  It thus remains to show that
  \begin{equation}
  \label{eq:zetampshifts}
    \Phi_\zeta\big(\FD(\CA,\mu)\big) = \FD(\CA,\nu+\mu).
  \end{equation}
  Fix $H \in \CA$ and let $\alpha_H=x_1,x_2,\ldots,x_\ell$ be an orthonormal basis of~$V^*$.
  The element $g = \alpha_H^{-\nu(H)-1}\zeta(\alpha_H) \in \SalphaH$ is a unit in $\SalphaH$, by \Cref{lem:universalunit}.
  For $\theta \in \FDinfty$, compute
  \begin{align*}
    \big(\Phi_\zeta(\theta)\big)(\alpha_H)
      &= \theta(\alpha_H^{\nu(H)+1}g) \\
      &= \alpha_H^{\nu(H)+1}\theta(g) + (\nu(H)+1)\alpha_H^{\nu(H)}\theta(\alpha_H) g \\
      &= \alpha_H^{\nu(H)}\theta(\alpha_H) \big( \alpha_H \tfrac{\partial g}{\partial \alpha_H} + (\nu(H)+1)g\big) + \alpha_H^{\nu(H)+1}\sum_{i=2}^\ell \theta(x_i) \tfrac{\partial g}{\partial x_i} \\
      &= \alpha_H^{\nu(H)} \theta(\alpha_H) U + \alpha_H^{\nu(H)+1} C,
  \end{align*}
  where $U = \alpha_H \frac{\partial g}{\partial \alpha_H} + (\nu(H)+1)g$ is again a unit in $\SalphaH$ and $C = \sum_{i=2}^\ell \theta(x_i) \tfrac{\partial g}{\partial x_i}$.
  Note that $C \in \SalphaH$, since $\theta \in \FDinfty$, and so is $\alpha_H^{1-\mu(H)}C$.
  We thus obtain
  \[
    \alpha_H^{-\nu(H)-\mu(H)}\big(\Phi_\zeta(\theta)\big)(\alpha_H) = \alpha_H^{-\mu(H)}\theta(\alpha_H) U + \alpha_H^{1-\mu(H)} C,
  \]
  implying that
  \[
    \big(\Phi_\zeta(\theta)\big)(\alpha_H) \in \alpha_H^{\nu(H)+\mu(H)} \SalphaH \Leftrightarrow \theta(\alpha_H) \in \alpha_H^{\mu(H)} \SalphaH.
  \]
  Since this holds for any $H \in \CA$, the reverse implication shows the containment ``$\subseteq$'' in~\eqref{eq:zetampshifts} and the forward implication the containment ``$\supseteq$''.
\end{proof}

We finally conclude the extension of~\cite[Thm.~2.8]{Wak2011} to the unitary setting.

\begin{theorem}
\label{thm:shifteduniversality}
  Let $\zeta \in \FDinfty$ and let $k \in \BBZ$.
  Then
  \[
    \zeta \text{ is } (k\omega)\text{-universal } \Longrightarrow \priminv\zeta\text{ is } \big((k+1)\omega\big)\text{-universal}.
  \]
\end{theorem}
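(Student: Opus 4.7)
The plan is to apply the homogeneous form of the Saito criterion, \Cref{thm:zieglersaito}\eqref{eq:zieglersaito3}, to the tuple $\{\nabla_{\partial_{x_i}}(\priminv\zeta)\}_{i=1}^{\ell}$ and verify it is an $S$-basis of $\FD(\CA,(k+1)\omega)$. This amounts to checking three conditions: (a)~containment of each $\nabla_{\partial_{x_i}}(\priminv\zeta)$ in $\FD(\CA,(k+1)\omega)$, (b)~$S$-independence of the tuple, and (c)~the matching polynomial-degree sum $|(k+1)\omega|$. I read the hypothesis with $\zeta$ $W$-invariant so that $\priminv\zeta$ is defined via \Cref{prop:mainingredient}.

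For (a), the $(k\omega)$-universality of $\zeta$ together with \Cref{lem:booleanmultiplicitywelldefined} applied to the trivial multiplicity $\mu\equiv 0$ gives $\zeta\in\FD(\CA,k\omega+1)^W$. The shift of multiplicity by $-\omega$ under $\prim$ recorded in \Cref{prop:primwelldef} then yields $\priminv\zeta\in\FD(\CA,(k+1)\omega+1)^W$, and a second application of \Cref{lem:booleanmultiplicitywelldefined} shows that $\Phi_{\priminv\zeta}\colon\DerS\to\FD(\CA,(k+1)\omega)$ is well-defined. For (c), homogeneity is a direct calculation: $(k\omega)$-universality forces $\pdeg(\zeta)=kh+1$, and the degree shift of $\prim$ by $-h$ (coming from $\pdeg(D)=-(h-1)$ and \eqref{eq:pdegnabla}) gives $\pdeg(\priminv\zeta)=(k+1)h+1$, whence $\sum_i\pdeg\nabla_{\partial_{x_i}}(\priminv\zeta)=\ell(k+1)h=|(k+1)\omega|$ by~\eqref{eq:coxeter}.

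The crucial step is (b), and my plan is to exploit the flatness of $\nabla$. Since $[\partial_{t_i},D]=0$ as derivations on $F$ for every $i$, the zero-curvature identity $[\nabla_\theta,\nabla_\phi]=\nabla_{[\theta,\phi]}$ gives $\nabla_{\partial_{t_i}}\prim=\prim\nabla_{\partial_{t_i}}$, and hence the key commutation
\[
  \nabla_{\partial_{t_i}}(\priminv\zeta)\;=\;\priminv\bigl(\nabla_{\partial_{t_i}}\zeta\bigr).
\]
From $(k\omega)$-universality, the tuple $\{\nabla_{\partial_{x_i}}\zeta\}_{i}$ is $S$-independent, hence $F$-independent in $\DerF$; the non-singular base change $\Jxt$ transfers this to $F$-independence of $\{\nabla_{\partial_{t_i}}\zeta\}_{i}$. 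Applying $\priminv$, whose inverse $\prim$ is given in the $\partial_{t_i}$-basis by the non-singular matrix $-\Binf^{-1}M_\eta$ of~\eqref{eq:primiso}, preserves $F$-independence in $\DerF$, yielding $F$-independence of $\{\nabla_{\partial_{t_i}}(\priminv\zeta)\}_{i}$; transferring back via $\Jtx$ to the $\partial_{x_i}$ basis gives the desired $F$-independence, and so $S$-independence of $\{\nabla_{\partial_{x_i}}(\priminv\zeta)\}_{i}$.

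The hard part, I expect, is precisely this preservation of $F$-independence under $\priminv$: \Cref{prop:mainingredient} only guarantees $T$-linearity, so one cannot naively transport $F$-linear combinations across the map and must argue via the explicit matrix representation together with the graded/$W$-invariant structure of the elements $\nabla_{\partial_{t_i}}\zeta$. With $S$-independence in hand, \Cref{thm:zieglersaito}\eqref{eq:zieglersaito3} identifies $\{\nabla_{\partial_{x_i}}(\priminv\zeta)\}_{i=1}^{\ell}$ as the desired $S$-basis of $\FD(\CA,(k+1)\omega)$, which is exactly the statement that $\Phi_{\priminv\zeta}$ is an $S$-module isomorphism, i.e.\ that $\priminv\zeta$ is $\bigl((k+1)\omega\bigr)$-universal.
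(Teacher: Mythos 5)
Your overall architecture (Saito's criterion via containment, a degree count, and $S$-independence) matches the paper's strategy in spirit, and parts (a) and (c) are essentially correct --- modulo a citation issue: the containment $\priminv\zeta\in\FD(\CA,(k+1)\omega+1)^W$ requires \Cref{cor:Hodgefiltration} (that $\prim$ is a $T$-isomorphism $\Hodgefil^{(-k-1)}\to\Hodgefil^{(-k)}$), not merely the well-definedness of $\prim$ from \Cref{prop:primwelldef}, since $\priminv$ shifts the multiplicity \emph{up} by $\omega$. The commutation $\nabla_{\partial_{t_i}}\priminv=\priminv\nabla_{\partial_{t_i}}$ is a correct and useful observation that the paper's argument also relies on.

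The genuine gap is exactly the step you flag yourself: the assertion that applying $\priminv$ preserves $F$-independence. This is not a deferrable formality; it is the whole theorem. The map $\priminv$ is only $T$-linear, and a $T$-linear bijection between submodules of $\DerF$ has no reason to carry an $F$-basis of $\DerF$ to an $F$-basis: for comparison, $\prim=\nabla_D$ annihilates the $F$-basis $\partial_{x_1},\dots,\partial_{x_\ell}$, and if you expand $\nabla_{\partial_{t_i}}\zeta$ in the known $R$-basis $\xi_1^{(1-k)},\dots,\xi_\ell^{(1-k)}$, the coefficients a priori involve $t_\ell$ and so cannot be pulled through $\priminv$ without correction terms. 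The paper's proof exists precisely to close this hole: it writes $\prim\nabla_{\eta_j}\zeta=\sum_i f_{ij}\nabla_{\partial_{t_i}}\zeta$ in the basis supplied by \Cref{prop:-1multi}, uses the degree bound $\deg f_{ij}=e_i+e_j^*-h<h$ to conclude $f_{ij}\in T$ and $\det(f_{ij})\in\BBC$, and only then invokes $T$-linearity of $\priminv$ together with the commutation to transport the basis property; nonvanishing of $\det(f_{ij})$ follows because the $\nabla_{\eta_j}\zeta$ already form an $S$-basis of $\FD(\CA,k\omega+1)$ by \Cref{thm:01multi}. Without an argument of this kind --- exploiting homogeneity, $W$-invariance and the specific exponents --- your step (b), and hence the proof, is incomplete.
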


\begin{proof}
  We set $\nu = k\omega$ in the proof for better readability.
  Let $\eta_1,\ldots,\eta_\ell \in \FD(\CA,1)^W$ be an $S$-basis.
  Due to the universality of $\zeta$, we also have that $\nabla_{\eta_1}\zeta,\ldots,\nabla_{\eta_\ell}\zeta \in \FD(\CA,\nu+1)^W$ form an $S$-basis.
  Since~$\nu$ is a multiple of~$\omega$, we also have that
  \[
    \prim\nabla_{\eta_j}\zeta \in \FD(\CA,\nu+1-\omega)^W.
  \]
  Because the $\partial_{t_j} \in \FD(\CA,-\omega+1)^W$ form an $S$-basis as well, we obtain
  \[
    \prim\nabla_{\eta_j}\zeta = \sum_i f_{ij} \nabla_{\partial_{t_i}}\zeta
  \]
  for $W$-invariant polynomials~$f_{ij}$.
  Comparing polynomial degrees yields $\deg f_{ij} = e_i + e_j^* - h < h$ (or, respectively, $f_{ij} = 0$ for $e_i + e_j^* < h$) which shows that $f_{ij} \in T$.
  Moreover, $\deg f_{i,\ell+1-i} = e_i + e_{\ell+1-i}^* - h = 0$ for all $1 \leq i \leq \ell$, and it follows that $\det f_{ij} \in \BBC$.
  Applying $\priminv$ gives
  \[
    \nabla_{\eta_j}\zeta = \sum_i f_{ij} \nabla_{\partial_{t_i}}\priminv\zeta.
  \]
  Because the vector fields on the left form an $S$-basis of $\FD(\CA,\nu+1)$, we readily see that $\det f_{ij} \neq 0$ and the $\nabla_{\partial_{t_i}}\priminv\zeta \in \FD(\CA,\nu+1)^W$ form an $S$-basis.
  Again, because~$\nu$ is a multiple of~$\omega$, we have $\priminv\zeta \in \FD(\CA,\nu+1-\omega)^W$ and we see that the derivations
  \[
    \nabla_{\partial_{x_j}}\priminv\zeta = \sum_i \Jtx \nabla_{\partial_{t_j}} \priminv \zeta
  \]
  form an $S$-basis of $\FD(\CA,\nu+\omega)$ by Saito's criterion.
\end{proof}

\subsection*{Acknowledgments}

We thank the referees for their helpful comments clarifying some points. 

\subsection*{Funding}

The research of this work was supported by the DFG (Grant RO 1072/16-1 to G.~R\"ohrle and Grants STU 563/2 and STU 563/4-1 to C.~Stump) and by JSPS KAKENHI (JP16H03924 to T.~Abe and JP15KK0144, JP18H01115 to M.~Yoshinaga). 

%%%%%%%%%%%%%%%%%%%%%%%%%%%%%%%%%%%%%%%%%%%%%%%%%%%%%%%%%%%%%%%%%%%%%%
%%%%%%%%%%%%% bibliography
%%%%%%%%%%%%%%%%%%%%%%%%%%%%%%%%%%%%%%%%%%%%%%%%%%%%%%%%%%%%%%%%%%%%%%

\bigskip

\bibliographystyle{amsalpha}

\newcommand{\etalchar}[1]{$^{#1}$}
\providecommand{\bysame}{\leavevmode\hbox to3em{\hrulefill}\thinspace}
\providecommand{\MR}{\relax\ifhmode\unskip\space\fi MR }
\providecommand{\MRhref}[2]{%
  \href{http://www.ams.org/mathscinet-getitem?mr=#1}{#2} }
\providecommand{\href}[2]{#2}

%%%%%%%%%%%%%%%%%%%%%%%%%%%%%%%%%%%%%%%%%%%%%%%%%%%%%%%%%%%%%%%%%%%%%%

\end{document}